\let\cc\c
\title{On generically stable types in dependent theories}
\author{Alexander Usvyatsov}
\address{Alexander Usvyatsov\\
  University of California -- Los Angeles\\
  Mathematics Department\\
  Box 951555\\
  Los Angeles, CA 90095-1555\\
  USA}
\address{ Alexander Usvyatsov\\ Universidade de Lisboa \\
  Centro de Matem\'{a}tica e Aplica\cc{c}\~{o}es Fundamentais\\
  Av. Prof. Gama Pinto,2\\
  1649-003 Lisboa \\
  Portugal}
\urladdr{http://www.math.ucla.edu/\textasciitilde alexus}
\thanks{Research partially supported by FCT grant SFRH / BPD / 34893 /
2007}
\date{\today}
\begin{document}

\begin{abstract}
    We develop the theory of generically stable types, independence relation based on
    nonforking and stable weight in the context of dependent (NIP)
    theories.
\end{abstract}

\maketitle

\section{Introduction and Preliminaries}
\subsection{Introduction}
    The original motivation for this paper was generalizing certain aspects of the theory
    developed by Haskell, Hrushovski and Macpherson in \cite{HHM} for
    stably dominated types to a broader context. We believe that the right framework
    for most results (at least assuming the theory is dependent) has to do with
    ``stable'' types introduced by Shelah
    in \cite{Sh715}.
    Since the name ``stable'' had been used
    (e.g. by Lascar and Poizat, see \cite{LP}) for a different (much stronger)  notion before Shelah's paper
    was written, in order to avoid confusion, we use different
    terminology suggested by Hrushovski and Pillay and call our main
    object of study ``generically stable'' types.

%    Although the name ``stable'' had been used
%    (e.g. by Lascar and Poizat) for a different notion before Shelah's paper
%    was written, we still believe

%    in order to avoid confusion, we use different
%    terminology suggested by Hrushovski and Pillay and call our main
%    object of study ``stable'' types.

%    Since the term ``stable type'' refers to a
%    different (stronger) notion in earlier works of Lascar and
%    Poizat, we adopt the terminology recently suggested by

    While the paper was being written, other particular cases of
    generically stable types became important for the study of theories interpretable in o-minimal
    structures carried out by Hasson, Onshuus, Peterzil and others.
    For example, notions of ``seriously stable'', ``hereditarily stable'' types
    were investigated in \cite{HaOn2}. Numerous conversations with
    Assaf Hasson and Alf Onshuus slightly changed the character of this work.

    We develop a cleaner and a more comprehensible
    theory of ``stable'' types than the one found in \cite{Sh715}. In particular we eliminate the need to work
    with finitely satisfiable types. This has
    two advantages: first, our approach allows one to avoid considering co-heir sequences
    (which we call Shelah sequences here) which used to create much confusion.
    Second, we provide a good picture of types over arbitrary sets,
    and not only over models or indiscernible sets.

    It is important to us, however, to show the connection between our and Shelah's
    approaches; therefore, sections 3 and 5 of the paper are devoted mostly to a
    systematic development of ``Shelah-stable'' types, giving a more complete picture than what
    is done in \cite{Sh715}.

    Together with deeper understanding came
    the realization that nonforking plays a central role in the general theory, and for
    generically stable
    types is equivalent to definability and gives rise to a nice independence relation, so we have
    a very smooth generalization of classical stability. In a sense this
    provides a complementary picture to the work of Dolich
    \cite{Dol} which characterizes forking in o-minimal theories
    (that is, ``as unstable as possible'' dependent theories).

    Some of our results can be found in a different form in a recent preprint by Hrushovski and
    Pillay \cite{HP} which was written simultaneously and independently
    of our work
    and is mostly focused on other issues such as invariant
    types and measures.
    %Most of the work in \cite{HP} is done with global
    %types, i.e. types over slightly saturated models, in which case
    %certain subtleties such as the difference between nonforking and
    %nonsplitting (see section 2) disappear.

    Let us make a note on our choice of terminology. Following Lascar and
    Poizat, some people call a (partial) type $\pi(\x)$ stable if every extension of
    it is definable. If $\pi(\x)$ is a formula which defines in $\FC$ the
    set $D$, a more common terminology
    is ``$D$ is a stable stably embedded (definable) set''. If $T$ is dependent, then
    stability of a definable set has many equivalent definitions, as investigated
    e.g. by Onshuus and Peterzil in \cite{OnPe}. In particular, a set $D$ is stable
    if and only if it fails the order property if and only if
    it fails the strict order property, that is, there is
    no definable partial order with infinite chains on $D$.
    It follows that stable embeddedness comes ``for free'', that is, if $D$ is stable
    then every externally definable subset of $D$ is definable with
    parameters in $D$. So $D$ is stable
    if and only if the induced structure (with or without taking into account external parameters) on it is stable.
    %It is also true (as we shall show
    %using \cite{HHM} Lemma 3.3) that $D$ is stable iff it can be defined by a stable formula
    %using additional parameters.
    Hence this terminology seems very reasonable to us and we will use it.

    It is also quite easy to see that Lascar-Poizat stability of a type $p$ is equivalent
    (assuming dependence) to the set of realizations of $p$ failing the order property
    (equivalently, the strict order property). This provides a justification for simply
    calling such types stable; still, we will restrain from doing so in order to avoid
    confusion between this and Shelah's terminology. So we'll call stable types in this strong
    sense ``Lascar-Poizat stable'' or ``hereditarily stable'' since in our
    context $p$ is Lascar-Poizat stable if and only if every extension of it is Lascar-Poizat stable
    if and only if every extension of it is generically stable.

    As for the term ``generically stable'', we think it captures the concept being studied here
    pretty well, since generically stable (that is, ``stable'' according to Shelah) types
    behave in a stable way ``generically'', i.e. when one takes nonforking extensions and Morley
    sequences.

    \bigskip
    The paper is organized as follows:

    Section 2 contains basic definitions and facts on indiscernible
    sequences, sets, splitting, forking, definability, etc in dependent
    theories. The main result of the section is Lemma
    \ref{lem:globalnonfork} which states (among other things) that
    the global average of a nonforking indiscernible set does not
    fork over the base set. This is easier if nonforking is replaced with
    nonsplitting, Lemma \ref{lem:seqstat}; for the nonforking case
    one needs to apply a more subtle analysis and understand the
    connections between different notions of splitting and forking
    in dependent theories.

    Section 3 is devoted
    to developing the basic theory of finitely satisfiable types, ultrafilters, Shelah sequences, etc. While it is
    essential for understanding Shelah's approach to generically stable types, it is not at all used in
    section 4 (where the main theory of generically stable types is developed), hence can be omitted in the
    first reading.

    Section 4 is the  central part of the article: we define generically stable types and prove most of their properties
    (such as definability and stationarity). We also show that
    generic stability is closed under parallelism.

    Section 5 is based on \cite{Sh715}, but we give a more complete
    and wide picture. In particular, we prove that when working over a slightly saturated model, being
    a generically stable type is equivalent to being both finitely satisfiable in and definable over a small subset.
    This result does not appear in \cite{Sh715}, and in fact was not known to Shelah at the time. We also
    give an example showing that for this criterion it is essential to consider type over saturated models.

    Section 6 presents a summary, connections to previous works on particular cases (stably dominated
    types \cite{HHM}, seriously stable types and hereditarily stable types
    introduced by Hasson and Onshuus in
    \cite{HaOn2}) and several examples of generically stable types that do not fall in any of the
    categories discussed above. These are also a good source of certain curious phenomena, showing the
    subtleties of working over sets as opposed to saturated models, differences between splitting
    and forking, which explain some of our earlier choices.
    In particular we see that nonsplitting extensions of
    generically stable types do not have to be generically stable, which
    can not happen with nonforking extensions.

    Section 7 is devoted to the original goal, developing the theory of independence for generically stable
    types, which happens to be quite easy once the general framework is well-understood. Independence
    relation for generically stable types turns out to be based on both nonforking and
    definability, generalizing classical stability. We also
    characterize generically stable types in terms of behavior of
    forking on the set of their realizations and show that a type
    stably dominated by a generically stable type is generically
    stable.

    Section 8 is the beginning of the theory of weight for generically stable types.
    We show that in a strongly dependent
    theory every generically stable type has finite weight. We also
    define stable weight of an arbitrary type, hoping that this
    will help us in understanding the ``stable part'' of a type in a
    dependent theory, and show that a strongly dependent type has finite
    stable weight. The key lemma for proving these results is Lemma
    \ref{lem:mutindisc} which says that under certain circumstances
    indiscernible sequences can be assumed to be mutually
    indiscernible. We find this interesting on its own.
    This section is related to more general works on different notions of
    weight in dependent theories: Onshuus and the author \cite{OnUs1} on dp-minimality, strong dependence and weight,
    \cite{OnUs2} on weight based on thorn-forking in rosy theories, and Adler \cite{Ad2} on
    ``burden''.
    %which
    %is more general than \cite{OnUs1} since his framework goes beyond dependent theories.

    The goal of the Appendix (which had originally been a part of section 3 and was removed for the sake of
    clarity) is to motivate viewing types as ultrafilters by passing to a more general framework
    of Keisler measures, in which Shelah's approach to finitely satisfiable types seems very natural.

    \medskip
    \subsection*{Acknowledgements}
%    \acknowledgements
    {The author thanks Saharon Shelah, Anand Pillay, Ita\"i Ben Yaacov and Hans Adler for fruitful
    discussions and correspondence. We are especially grateful to Assaf Hasson and Alf Onshuus
    for numerous conversations, questions, suggestions and comments on preliminary
    versions that motivated and improved this work. Many thanks to the anonymous referee
    for very helpful comments and suggestions.  }
%    \end{acknowledgements}

\subsection{Notations}
In this paper, $T$ will denote a complete theory, $\tau$ will denote
the vocabulary of $T$, $L$ will denote the language of $T$. We will
assume that everything is happening in the monster model of $T$
which will be denoted by $\fC$. Elements of $\FC$ will be denoted
$a,b,c$, finite tuples will be denoted $\a,\b,\c$, sets (which are
all subsets of $\FC$) will be denoted $A,B,C$, and models of $T$
(which are all elementary submodels of $\FC$) will be denoted by
$M,N$, etc.

Given an order type $O$ and a sequence \inseq{\a}{i}{O}, we often
denote $\a_{<i} = \lseq{\a}{j}{i}$, similarly for $\a_{\le i}$,
$\a_{>i}$, etc. We will often identify a tuple $\a$ or a sequence
$\inseq{\a}{i}{O}$ with the set which is its union, but it should
always be clear from the context what we mean (although sometimes
when confusions might arise, we make the distinction, e.g.
$\Av(I,\cup I)$ will denote the average type of a sequence $I$ over
itself).

By $\a \equiv_A \b$ we mean $\tp(\a/A) = \tp(\b/A)$.

\subsection{Preliminaries}

Recall that a theory $T$ is called \emph{dependent} if there does
not exist a formula which exemplifies the independence property. We
are mostly going to use the following equivalent definition:

\begin{fct}\label{fct:depend}
    $T$ is dependent if and only if there do not exist an
    indiscernible sequence $I = \lseq{\a}{i}{\lam}$, a formula
    $\ph(\x,\y)$ and $\b$ such that both
    $$\set{i \colon \models \ph(\a_i,\b)}$$ and
    $$\set{i \colon \models \neg\ph(\a_i,\b)}$$
    are unbounded in $\lam$.
\end{fct}

A type $p \in \tS_m(B)$ is said to be \emph{definable} over $A$ if
for every formula $\ph(\x,\y)$ with $\len(\x) = m, \len(y) = k$
there exists a formula $d_p\x\ph(\x,\y)$ with free variables $\y$
such that for every $\b \in B^k$
\[
\ph(\x,\b) \in p \iff \models d_p\x\ph(\x,\b)
\]
A definition schema $d_p$ is said to be \emph{good} is for every set
$C$ the set
\[
\left\{\ph(\x,\c) \colon \ph(\x,\y) \; \mbox{is a formula}, \len(\x)
= m, \c \in C, \models d_p\x\ph(\x,\c) \right\}
\]
is a complete type over $C$. We call this type \emph{the free
extension} of $p$ to $C$ with respect to $d$ and denote it by
$p|^dC$. We call a type \emph{properly definable} over a set $A$ if
it is definable over $A$ by a good definition.

A type $p \in \tS(B)$ said to be \emph{finitely satisfiable} in a
set $A$ if for every formula $\ph(\x,\b) \in p$ there exists $\a \in
A$ such that $\models \ph(\a,\b)$. If $A \subseteq B$ then we also
say that $p$ is a \emph{coheir} of $p\rest A$.
%\begin{rmk}
    Clearly, if $p\in\tS(M)$ and $M$ is a model, then $p$ is finitely
    satisfiable in $M$.
%\end{rmk}

Recall that a sequence $I = \inseq{\a}{i}{O}$ (where $O$ is a linear
ordering) is called \emph{indiscernible} over a set $A$ if the type
of $\a_{i_1},\ldots,\a_{i_k}$ over $A$ depends only on the order
between the indices $i_1, \ldots, i_k$ for every $k$. $I$ is called
an indiscernible \emph{set} if the type above depends on $k$ only.

A hyperimaginary element (tuple) $\a$ is said to be \emph{bounded}
over a set $A$ if the orbit of $\a$ under the action of
$\Aut(\fC^{heq}/A)$ is small, i.e. of cardinality less than $|\fC|$.
The \emph{bounded closure of} $A$, denoted by $\bdd^{heq}(A)$, is
the collection of all hyperimaginary elements bounded over $A$.
Clearly, this is a generalization of the algebraic closure, and
usually is a bigger set. If $T$ is stable, then $\bdd^{heq}(A) =
\acl^{eq}(A)$ for every set $A$. We will not make real use of
hyperimaginaries in the paper, hence will not concentrate on these
issues.

Let us say that two tuples are of \emph{Lascar distance $1$} over
$A$ if there exists an indiscernible sequence over $A$ containing
both tuples. Two tuples $\a$ and $\b$ are of \emph{Lascar distance
$k$} over $A$ if there exist $\a = \a_1, \a_2, \ldots, \a_{k+1} =
\b$ such that $\a_i,\a_{i+1}$ are of Lascar distance $1$ over $A$.
Recall that two tuples $\a,\b$ are said to have the same
\emph{Lascar strong type} over a set $A$ if they are of finite
Lascar distance over $A$. In this case we will often write
$\lstp(\a/A) = \lstp(\b/A)$.
%Note that if $T = T^{eq}$,
%then $A = \bdd(A)$ is equivalent to the following: every type over
%$A$ determines a Lascar strong type over $A$.

\subsection{Global Assumptions}

%\begin{asm}
    All theories mentioned in this paper are \emph{assumed to be
    dependent} unless stated otherwise. For the sake of
    clarity of presentation we also assume $T = T^{eq}$.
%\end{asm}

%\section{Basic notions and facts}

\section{Indiscernible sequences, nonsplitting and stationarity}

This section contains a collection of basic definitions and facts
some of which are well known, which will be used widely throughout
the paper.

Fact \ref{fct:depend} motivates the following definitions:

\begin{dfn}
    Let $I = \lseq{\a}{i}{\lam}$ be an indiscernible
    sequence, $B$ a set. We define the \emph{average type} of $I$
    over $B$, $\Av(I,B)$ as the set of all formulae $\ph(\x,\b)$
    such that $\set{i \colon \neg\ph(\a_i,\b)}$ is bounded in \lam.
\end{dfn}

\begin{rmk}
    If $I$, $B$ are as above, then $\Av(I,B) \in \tS(B)$.
\end{rmk}

Note that

\begin{rmk}
    Let $I = \lseq{\a}{i}{\lam}$ an indiscernible
    \emph{set}, $B$ a set. Then $\ph(\x,\b) \in \Av(I,B)$ if and
    only if $\set{i \colon \neg\ph(\a_i,\b)}$ is \emph{finite}.
\end{rmk}

In fact, we can say a bit more. The following definition is
motivated by \cite{Sh715}, Definition 1.7:

\begin{dfn}
  Let $I = \seq{\b_i} = \inseq{\b}{i}{O}$ be an infinite indiscernible sequence. We say that
  a formula $\ph(\x,\y)$ is \emph{stable} for $I$ if for every $\c \in \fC$
  the set $\set{i \in I \colon \ph(\b_i,\c)}$ is either finite or co-finite.
\end{dfn}

\begin{obs}\label{obs:stabform}
%\begin{enumerate}
%\item
  If $I = \inseq{\b}{i}{O}$ is an infinite indiscernible set, then every $\ph(\x,\y)$
  is stable for $I$. Moreover, for every $\ph(\x,\y)$
  there exists $k = k_\ph < \om$ such that for every $\c \in \fC$, either
  $|\set{i \in O \colon \ph(\b_i,\c)}|<k$
  or   $|\set{i \in O \colon \neg\ph(\b_i,\c)}|<k$.
%\item
%  If $\inseq{\b}{i}{I}$ is an infinite set totally indiscernible with respect to
%  a set formulae $\Delta$, then for every $\ph(\x,\y)$ with
%  $\exists \y \ph(\x,\y) \in \Delta$, there exists
%  $k = k_\ph$ as in clause (i).
%\item
%  In (ii), $k$ depends only on $\ph(\x,\y)$ and the $\Delta$-type of the sequence
%  $\seq{\b_i}$ over the empty set.
%\end{enumerate}
\end{obs}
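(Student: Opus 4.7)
The plan is to prove the two assertions in sequence: the first (qualitative stability of every formula for $I$) by a direct reindexing argument using dependence, and the second (uniform bound) by a compactness argument that reduces back to the first.

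For the stability of an arbitrary $\ph(\x,\y)$ for $I$, I fix $\c \in \fC$, set $A = \set{i \in O \colon \ph(\b_i,\c)}$, and suppose toward a contradiction that both $A$ and $O\setminus A$ are infinite. Choosing indices $i_0 < i_1 < \dots$ in $A$ and $j_0 < j_1 < \dots$ in $O\setminus A$, I form the interleaved $\om$-sequence $J = (\b_{i_0}, \b_{j_0}, \b_{i_1}, \b_{j_1}, \dots)$. Because $I$ is indiscernible as a \emph{set}, the type of any finite subfamily of distinct $\b_i$'s depends only on its length; in particular $J$ is an indiscernible sequence. The set of even positions of $J$ satisfies $\ph(-,\c)$ and the set of odd positions satisfies $\neg\ph(-,\c)$, and both are unbounded in $\om$, directly contradicting Fact~\ref{fct:depend}.

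For the moreover part, assume no uniform $k_\ph$ exists; then for each $n < \om$ there is $\c_n \in \fC$ with $|\set{i \colon \ph(\b_i,\c_n)}| \ge n$ and $|\set{i \colon \neg\ph(\b_i,\c_n)}| \ge n$. After passing to a countable sub-family (which remains an indiscernible set) enumerated as $(\b_n)_{n<\om}$, I consider the partial type with parameters in $I$,
\[
\Sigma(\y) = \set{\ph(\b_{2l},\y) \colon l < \om} \cup \set{\neg\ph(\b_{2l+1},\y) \colon l < \om}.
\]
A finite subset $\Sigma_0 \subseteq \Sigma$ mentions at most $2N$ of the $\b_n$'s for some $N$; from $\c_N$ one extracts $N$ elements of $I$ on which $\ph(-,\c_N)$ holds and $N$ on which it fails. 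Since $I$ is an indiscernible set, the resulting $2N$-tuple has the same type over $\emptyset$ as the $2N$-tuple actually named in $\Sigma_0$, so an automorphism of $\fC$ carries $\c_N$ to a realization of $\Sigma_0$. Thus $\Sigma$ is finitely satisfiable and hence realized by some $\c^* \in \fC$, giving that both $\set{i \colon \ph(\b_i,\c^*)}$ and $\set{i \colon \neg\ph(\b_i,\c^*)}$ are infinite subsets of $I$; this contradicts the first part applied to $\c^*$.

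The only subtle point is the compactness step: the $\c_n$'s are \emph{a priori} unrelated, and the argument crucially uses indiscernibility of $I$ as a \emph{set} (not merely as a sequence) in order to freely permute finite sub-tuples into any prescribed pattern—which is exactly what makes $\Sigma$ finitely satisfiable.
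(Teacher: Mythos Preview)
Your argument is correct and follows essentially the same route as the paper: both exploit that set-indiscernibility lets one realize any finite positive/negative pattern for $\ph$, then invoke dependence (you via Fact~\ref{fct:depend} on an interleaved sequence, the paper by observing that consistency of $\{\ph(\b_i,\y):i\in W\}\cup\{\neg\ph(\b_i,\y):i\in U\}$ for all finite disjoint $U,W$ is the independence property). The paper simply compresses your two steps into one, going straight for the uniform bound; one trivial bookkeeping fix in your compactness step: take $\c_M$ with $M$ at least the total number of formulas in $\Sigma_0$, since the indices mentioned could all be even (or all odd), so $\c_N$ with $2N=|\Sigma_0|$ need not suffice.
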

\begin{prf}
%\begin{enumerate}
%\item
  If not, by indiscernibility we have for every $U,W \subseteq I$ finite
  disjoint,
  $$\set{\ph(\b_i,\y) \colon i \in W}\cup\set{\neg\ph(\b_i,\y) \colon i \in U}$$
  is consistent, clearly contradicting dependence of $T$.
%\item
%  The same proof.
%\item
%  Clear.
%\end{enumerate}
\end{prf}

It is often useful to consider $\Av(I,\cup I)$, i.e. the average
type of an endless indiscernible sequence over itself. Note that
$\ph(\x,\a_{<j}) \in \Av(I,\cup I)$ iff $\ph(\a_i,\a_{<j})$ holds
for all $i\ge j$. So:

\begin{rmk}\label{rmk:avind}
    Let $I$ be an indiscernible sequence. $\a \models \Av(I,\cup I)$
    if and only if $I^\frown\set{\a}$ is indiscernible.
\end{rmk}

Let us recall the definition of nonsplitting:

\begin{dfn}
%\begin{enumerate}
%\item
\begin{enumerate}
\item
  A type $p \in \tS(B)$ \emph{does not split}
  over a set $A$ if whenever $\b,\c \in B$ have
  the same type over $A$, we have
  $\ph(\x,\b) \in p \iff \ph(\x,\c) \in p$ for every
  formula $\ph(\x,\y)$.
\item
  A type $p \in \tS(B)$ \emph{does not split strongly}
  over a set $A$ if whenever $\b,\c \in B$ are of Lascar distance $1$ over $A$, we have
  $\ph(\x,\b) \in p \iff \ph(\x,\c) \in p$ for every
  formula $\ph(\x,\y)$.
\item
  A type $p \in \tS(B)$ \emph{does not Lascar-split}
  over a set $A$ if whenever $\b,\c \in B$ have the same Lascar strong type over $A$, we have
  $\ph(\x,\b) \in p \iff \ph(\x,\c) \in p$ for every
  formula $\ph(\x,\y)$.
\end{enumerate}
%\item
%    A Keisler measure $\mu$ on $\Def(B,D)$ \emph{does not
%    split} over $A$ if $\mu(\ph(\x,\b))$ depends only
%    on $\tp(\b/A)$ for every formula $\ph(\x,\y)$ and $\b \in B$.
%\end{enumerate}
\end{dfn}

Note that a global type doesn't split over a set $A$ if it is
invariant under the action of the automorphism group of \FC over
$A$.
%(does not split strongly is a similar notion,
%when $A$ is replaced with $\bdd^{eq}(A)$).
One can also think of nonsplitting as a weak form of definability.

There are several ways to obtain types which do not split over a set
$A$.

%\begin{obs}\label{obs:Avnonsplit}
%\begin{enumerate}
%\item
%    Let $I = \lseq{\a}{i}{\lam}$ be an indiscernible set over
%    $A$, and denote $A_i = A\cup\a_{<i}$. Then $\tp(\a_i/A_i)$ does
%    not split over $A$.
%\item
%    Let $I = \lseq{\b}{i}{\lam}$ be an indiscernible
%    set over a set $A$. Then
%    $p = \Av(I,A\cup I)$ does not split over $A$.
%\end{enumerate}
%\end{obs}
%\begin{prf}
%    (i) is clear, (ii) follows from (i) and Remark \ref{rmk:avind}.
%\end{prf}

\begin{obs} (No use of dependence)
\label{obs:nonsplit}
\begin{enumerate}
\item
  If a type over $B$ is finitely satisfiable in $A \subseteq B$,
  then it does not split over $A$.
\item
  If a type over $B$ is definable over $A \subseteq B$,
  then it does not split over $A$.
\end{enumerate}
\end{obs}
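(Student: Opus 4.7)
The plan is to prove both clauses by direct unpacking of the relevant definitions; no use of dependence is needed (as the observation's parenthetical remark already advertises). In each case, I take $\b, \c \in B$ with $\tp(\b/A) = \tp(\c/A)$ and show that no formula $\ph(\x, \y)$ can witness splitting.

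For clause (1), suppose $p \in \tS(B)$ is finitely satisfiable in $A$ and, aiming at a contradiction, that $\ph(\x, \b) \in p$ while $\ph(\x, \c) \notin p$. Since $p$ is complete, $\neg\ph(\x, \c) \in p$, so the conjunction $\ph(\x, \b) \wedge \neg\ph(\x, \c)$ lies in $p$. Finite satisfiability yields $\a \in A$ realizing this conjunction, whence $\models \ph(\a, \b) \wedge \neg\ph(\a, \c)$. But $\ph(\a, \y)$ is a formula over $A$ separating $\b$ from $\c$, contradicting $\tp(\b/A) = \tp(\c/A)$.

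For clause (2), suppose $p$ is definable over $A$ via the schema $d_p$. For every formula $\ph(\x, \y)$, the $\y$-formula $d_p\x\ph(\x, \y)$ has parameters in $A$. Then, using the defining property of $d_p$ on both $\b$ and $\c$,
\[
\ph(\x, \b) \in p \iff\, \models d_p\x\ph(\x, \b) \iff\, \models d_p\x\ph(\x, \c) \iff \ph(\x, \c) \in p,
\]
where the middle equivalence holds because $d_p\x\ph(\x, \y)$ is over $A$ and $\tp(\b/A) = \tp(\c/A)$.

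There is no real obstacle here; the only mild care needed is noting in (1) that the witness $\a$ lives in $A$, hence the separating formula $\ph(\a, \y)$ is genuinely over $A$, and in (2) that the defining schema produces a $\y$-formula with parameters in $A$ to which the hypothesis $\tp(\b/A) = \tp(\c/A)$ can be applied.
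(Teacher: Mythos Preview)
Your proof is correct; both clauses are handled by the standard direct arguments from the definitions, and no use of dependence is made. The paper in fact states this observation without proof (it is a well-known elementary fact), so there is nothing to compare against --- your argument is exactly the one a reader would supply.
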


\begin{obs} (No use of dependence)\label{obs:Lascarstrong}
    Let $M$ be a $(|A|+\aleph_0)^+$-saturated model containing $A$,
    $p \in \tS(M)$.
    Then $p$ does
    not Lascar-split over $A$ if and only if $p$ does not split strongly over $A$.
\end{obs}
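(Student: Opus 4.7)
The forward direction (non-Lascar-splitting implies non-strong-splitting) is immediate from the definitions: any two tuples on a common $A$-indiscernible sequence have the same Lascar strong type over $A$, so a type respecting the weaker equivalence automatically respects the stronger one. No saturation is needed here.

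For the converse, I would assume $p$ does not split strongly over $A$ and take arbitrary $\b, \c \in M$ with $\lstp(\b/A) = \lstp(\c/A)$. By definition there is a chain $\b = \b_0, \b_1, \ldots, \b_k = \c$ with consecutive tuples of Lascar distance $1$ over $A$. The non-strong-splitting hypothesis only controls $\ph(\x, \b_i)$ versus $\ph(\x, \b_{i+1})$ when both tuples lie in $M$; a priori the intermediate $\b_i$ live only in $\fC$, so the crucial move is to relocate the chain inside $M$.

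This is exactly where saturation enters. I would consider the complete type $q(\y_1, \ldots, \y_{k-1}) := \tp(\b_1, \ldots, \b_{k-1} / A\b\c)$, whose parameter set has cardinality at most $|A| + \aleph_0$ and which involves only finitely many variables. By the assumed $(|A|+\aleph_0)^+$-saturation of $M$, $q$ is realized by some $\b_1', \ldots, \b_{k-1}' \in M$. Setting $\b_0' := \b$ and $\b_k' := \c$, each consecutive pair $(\b_i', \b_{i+1}')$ has the same type over $A$ as $(\b_i, \b_{i+1})$; since the property of lying on a common $A$-indiscernible sequence is preserved by $A$-automorphisms of $\fC$, the new pairs remain of Lascar distance $1$ over $A$.

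A telescoping application of non-strong-splitting along the chain $\b, \b_1', \ldots, \b_{k-1}', \c$ (entirely inside $M$) then yields $\ph(\x, \b) \in p \iff \ph(\x, \c) \in p$ for every formula $\ph$, establishing non-Lascar-splitting. The main obstacle is conceptual rather than computational: recognizing that the witnesses of the Lascar-distance-$1$ steps must be imported into $M$ before the non-strong-splitting hypothesis can be applied, and packaging them as a single complete type so that saturation preserves the distance-$1$ structure of each step at once.
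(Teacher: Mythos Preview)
Your proof is correct and follows essentially the same route as the paper: the nontrivial direction is handled by taking a Lascar chain from $\b$ to $\c$, pulling the intermediate tuples into $M$ by saturation, and telescoping non-strong-splitting along the chain. You are more explicit than the paper about how the chain is relocated (realizing $\tp(\b_1,\ldots,\b_{k-1}/A\b\c)$ in $M$ and observing that Lascar distance $1$ is $A$-automorphism invariant), but the argument is the same.
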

\begin{prf}
    One direction is clear. For the other one, if $p$ Lascar-splits
    over $A$, then there are $\b,\c\in M$ of the same Lascar strong
    type with $\ph(\x,\b)\land\neg\ph(\x,\c)\in p$. There are
    finitely many elements $\b = \b_1, \ldots,
    \b_k = \c$  of Lascar distance 1
    which witness that $\b,\c$ are of finite Lascar distance, and by
    saturation we may assume they all lie in $M$ (we may even assume that
    all the indiscernible sequences witnessing Lascar distance 1 are in $M$). Now if $p$ does
    not strongly split over $A$, then by induction $\ph(\x,\b_1) \in p \then
    \ph(\x,\b_i)\in p$ for all $i$,
    a contradiction
\end{prf}

Given a set $A$, there are boundedly many types which do not split
over $A$:

\begin{obs}\label{obs:fewnonsplit}(No use of dependence)
    Let $A$ be a set. Then there are at most $2^{2^{|A|+|T|}}$ types
    over $\FC$ which do not split over $A$. Same is true for
    splitting replaced with Lascar splitting or strong splitting.
\end{obs}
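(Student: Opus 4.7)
The plan is to show that any type satisfying one of the three non-splitting properties is entirely determined by its values on a ``small'' set of representatives of the appropriate equivalence relation over $A$, and then count. Set $\kappa = 2^{|A|+|T|}$.

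For the ordinary nonsplitting case: since $|\tS_n(A)| \le \kappa$ for each $n<\om$, I would choose $A^* \subseteq \fC$ with $|A^*|\le\kappa$ containing a representative of every complete type over $A$ in every finite arity. If $p \in \tS(\fC)$ does not split over $A$, then for any formula $\ph(\x,\y)$ and any $\b$ of matching length, I pick $\b' \in A^*$ with $\b' \equiv_A \b$ and invoke nonsplitting to get $\ph(\x,\b)\in p \iff \ph(\x,\b')\in p$. Hence $p$ is determined by the set $\{(\ph,\b') \in L \times A^* : \ph(\x,\b') \in p\}$, whose underlying index set has cardinality $\le |T|\cdot\kappa = \kappa$. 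This yields at most $2^\kappa = 2^{2^{|A|+|T|}}$ such types.

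For Lascar splitting and strong splitting: I would first note that these two notions coincide for global types. Indeed, if $p$ does not strongly split over $A$ and $\b, \c$ have the same Lascar strong type over $A$, then iterating non-strong-splitting along a chain $\b = \b_0, \b_1, \ldots, \b_k = \c$ of consecutive pairs of Lascar distance $1$ gives $\ph(\x,\b) \in p \iff \ph(\x,\c) \in p$; the converse is trivial. Thus it suffices to bound the number of non-Lascar-splitting types, and the same counting argument applies, now using a representative set for Lascar strong types over $A$ in place of representatives for complete types.

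The main obstacle is the bound on the number of Lascar strong types over $A$ in each finite arity: one needs this to be at most $\kappa$, which is a standard consequence of the boundedness of the Lascar Galois group over $A$. Given this, a representative set $A^{**} \subseteq \fC$ of cardinality $\le \kappa$ can be fixed, and the identical counting yields at most $2^\kappa = 2^{2^{|A|+|T|}}$ non-Lascar-splitting (equivalently, non-strongly-splitting) types. Since nonsplitting implies non-Lascar-splitting, the bound is uniform across all three notions.
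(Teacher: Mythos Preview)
Your proof is correct and follows essentially the same approach as the paper: determine a non-splitting global type by its values on a set of representatives of (Lascar strong) types over $A$, then count. The only cosmetic difference is that for strong splitting the paper cites Observation~\ref{obs:Lascarstrong} (equivalence with Lascar splitting over a saturated model), whereas you reprove that equivalence directly for global types by walking along a finite Lascar-distance chain---which is exactly the same argument, and works because $\fC$ contains all the intermediate witnesses.
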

\begin{prf}
    Let $p$ be a global type which does not split over $A$.
    For every formula $\ph(\x,\c)$ with parameters, the answer to
    the question whether or not $\ph(\x,\c)$ belongs to $p$ depends
    only on the type $\tp(\c/A)$. Since there are at most $2^{|A|+|T|}$
    such types, we are done. For Lascar splitting use the same
    argument with types replaced with Lascar strong types (recall that the number
    of Lascar strong types over $A$ is also bounded by $2^{|A|+|T|}$ - e.g. Proposition 2.7.5 in \cite{Wa}); for
    strong splitting apply in addition Observation
    \ref{obs:Lascarstrong}.
\end{prf}

Recall that a formula \emph{forks} over a set $A$ if it implies a finite disjunction of formulae
each of which divides over $A$. A (partial) type forks over $A$ if it contains a forking formula.
In dependent theories forking is strongly related to splitting (see Fact \ref{fct:splitfork}); still, these
notions differ, so we need to state the analogue of Observation \ref{obs:nonsplit} separately:

\begin{obs} (No use of dependence)
\label{obs:nonfork}
\begin{enumerate}
\item
  If a (partial) type over $B$ is finitely satisfiable in $A \subseteq B$,
  then it does not fork over $A$. Moreover, if $\ph(\x,\b)$ is satisfiable
  in $A$, then $\ph(\x,\b)$ does not fork over $A$.
\item
  If a type $p$ over $B$ is definable over $A \subseteq B$,
  by a good definition $d_p$ then it does not fork over $A$.
  %Moreover, if $p$ is definable with respect
  %to $\ph(\x,\y)$ by the formula $d_p\x\ph(\x,\y)$ and $\ph(\x,\b)\in p$, then
  %$\ph(\x,\b)$ does not fork over $A$.
\end{enumerate}
\end{obs}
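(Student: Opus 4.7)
The plan is to reduce the forking statements to dividing statements in both parts, using in each case the standard trick that if a formula $\phi(\x,\b)$ forks over $A$, witnessed by $\phi(\x,\b)\vdash\bigvee_{j\le n}\psi_j(\x,\c_j)$ with each $\psi_j$ dividing, then one of the disjuncts must inherit the good property (satisfiability in $A$, or membership in a definable extension), after which one derives a contradiction with the chosen $\psi_j$ not actually dividing.

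For part (1), I would first prove the \emph{Moreover} clause: if $\phi(\x,\b)$ is satisfied by some $\a\in A$, then $\phi$ does not divide over $A$. Given any $A$-indiscernible $\seq{\b_i:i<\omega}$ with $\b_0=\b$, each $\b_i\equiv_A \b$, so $\models\phi(\a,\b_i)$ for all $i$; hence $\a$ simultaneously realizes $\set{\phi(\x,\b_i):i<\omega}$, ruling out $k$-inconsistency for any $k$. Now assume $\phi(\x,\b)$ were to fork, with $\phi(\x,\b)\vdash\bigvee_{j\le n}\psi_j(\x,\c_j)$. Since $\models\phi(\a,\b)$, we have $\models\psi_{j^*}(\a,\c_{j^*})$ for some $j^*$, so $\psi_{j^*}(\x,\c_{j^*})$ is satisfiable in $A$, whence does not divide by the previous step — a contradiction. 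Finally, a type $p\in\tS(B)$ finitely satisfiable in $A$ has every one of its formulas satisfiable in $A$, so by the above no such formula can fork, hence $p$ itself does not fork.

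For part (2), again I would start at the dividing level. Suppose $\phi(\x,\b)\in p$ divides over $A$, witnessed by an $A$-indiscernible $\seq{\b_i:i<\omega}$ with $\b_0=\b$ and $k$-inconsistency of $\set{\phi(\x,\b_i):i<\omega}$. Since $d_p\x\phi(\x,\y)$ is a formula with parameters in $A$, and each $\b_i\equiv_A\b$, we get $\models d_p\x\phi(\x,\b_i)$ for all $i$. Let $C=B\cup\bigcup_i\b_i$; by goodness of $d_p$, the set $p|^d C$ is a complete, hence consistent, type over $C$ containing all $\phi(\x,\b_i)$, contradicting $k$-inconsistency. Thus no formula in $p$ divides over $A$, and the same argument applied to $p|^d C'$ (which is itself defined by the same schema $d_p$, hence by a good definition over $A$) shows that no formula in any free extension divides over $A$.

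To bootstrap from dividing to forking, suppose some $\phi(\x,\b)\in p$ forks, via $\phi(\x,\b)\vdash\bigvee_{j\le n}\psi_j(\x,\c_j)$ with each $\psi_j$ dividing. Apply goodness to get $p|^d C$ with $C\supseteq B\cup\set{\c_j:j\le n}$; this is a complete type containing $\phi(\x,\b)$, so it contains $\bigvee_{j\le n}\psi_j(\x,\c_j)$, hence some $\psi_{j^*}(\x,\c_{j^*})\in p|^d C$. But then $p|^d C$ contains a dividing formula, contradicting the previous paragraph applied to this free extension.

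The one point that requires care — and which I expect to be the main subtlety — is that the extension-and-disjunction trick needs the extended type to again be definable by a \emph{good} schema over $A$, so that the no-dividing conclusion applies uniformly to all free extensions; this is exactly what the definition of a good definition buys us, and without goodness the argument would fail.
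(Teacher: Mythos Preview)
Your proof is correct and follows essentially the same idea as the paper's. The only organizational difference is that for part (2) the paper compresses the forking-to-dividing reduction by passing once to a $(|A|+\aleph_0)^+$-saturated model $M$ containing $A$ (so that the dividing witnesses and indiscernible sequences all live in $M$) and then taking the single extension $q = p|^d M$ to reach the contradiction, whereas you unpack this explicitly: first show no formula in any free extension divides, then handle the forking disjunction by extending via $d_p$ to a set containing the $\c_j$'s. Both arguments rest on the same core observation---that a type defined by a good schema over $A$ cannot contain a formula $\psi(\x,\c)$ that divides over $A$, since the schema forces $\psi(\x,\c_i)$ into the type for every $\c_i$ in the witnessing $A$-indiscernible sequence---and your remark that goodness is exactly what makes the extension-and-disjunction trick go through is precisely the point.
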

\begin{prf}
    The first part is very easy. For the second part note that
    if $p$ forks over $A$, then every extension $q$ of $p$
    to a $(|A|+\aleph_0)^+$-saturated model $M$ containing $A$ divides
    over $A$; moreover, the indiscernible sequence exemplifying dividing
    lies in $M$. Clearly, taking $q=p|^d M$ we get a contradiction.
%
%    assume
%    $\ph(\x,\b) \in p$,
%     $\ph(\x,\b)\vdash\lor_{k<k^*}\psi^k(\x,\c^k)$ and each
%     $\psi^k(\x,\c^k)$ divides over $A$ witnessed by
%     the indiscernible sequence $\lseq{\c^k}{i}{\om}$ with $\c^k_0 = \c^k$.

%    Let $\sigma^k_i\in\Aut(\fC/A)$ be such that $\sigma^k_i(\c^k)=c^k_i$, and denote
%    $\b^k_i = \sigma^k_i(\b)$. Let $\c_{k,i} = \sigma^k_i(\c^0_i)$
%    So $\ph(\x,\b^0_i) \vdash \lor_{k<k^*}\psi^k(\x,\c_{k,i})$. Let
%    $\a \models p|^dB\set{\b^0_i\colon i<\om}$; since $\b^0_i$ have
%    the same type over $A$ and the definition of $p$ is over $A$,
%    we have $\ph(\a,\b^0_i)$ for all $i$.

%    Clearly for some $k<k^*$ for infinitely many $i<\om$ the formula
%    $\psi^k_i(\a,\c_{k,i})$ holds. On the other hand, the sequence
%    $\seq{\c_{k,i}\colon i<\om}$ is the $\sigma^k$...
\end{prf}

The following observation due to Shelah (\cite{Sh783}, Observation 5.4) is easy
but extremely useful:

\begin{fct}\label{fct:strongsplit}
    In a dependent theory strong splitting implies dividing.
\end{fct}
\begin{prf}
    Assume $p \in \tS(B)$ splits strongly over $A$, that is, there
    exists a sequence $I = \lseq{\b}{i}{\om}$ indiscernible over $A$
    with $\ph(\x,\b_0),\neg\ph(\x,\b_1) \in p$; then
    $\psi(\x,\b_0\b_1) = \ph(\x,\b_0)\land\neg\ph(\x,\b_1) \in p$ divides
    over $A$, since the set
    $$ \set{\ph(\x,\b_{2i}),\neg\ph(\x,\b_{2i+1})\colon i<\om}$$
    is inconsistent by the dependence of $T$.
\end{prf}

The other implication is generally not true, as we will see later, unless one works
with types over slightly saturated models, in which case the following general fact
holds (it will not be of much importance to us):

\begin{fct} (No use of dependence)
    Let $A$ be a set, $M$ be a $(|A|+\aleph_0)^+$-saturated model containing $A$,
    $p \in \tS(M)$ which forks over $A$, then it splits strongly over $A$.
\end{fct}
\begin{prf}
    Easy.
\end{prf}

%The following general fact (due to Shelah) will not be of
%actual importance in our context, but it is still worth mentioning, since it provides us with better
%understanding of different notions involved and connections between them:

%If $A = \bdd^{eq}(A)$, we can connect, therefore, forking with splitting:

So (recalling also Observation \ref{obs:Lascarstrong}) we can
conclude:

\begin{fct}\label{fct:splitfork}
\begin{enumerate}
\item
    Let $M$ be a $(|A|+\aleph_0)^+$-saturated model containing $A$,
    $p \in \tS(M)$.
    Then $p$ does not split strongly over $A$  if and only if
    $p$ does not Lascar split over $A$ if and only if $p$ does
    not fork over $A$ if and only if $p$ does not divide over $A$.
\item
    Let $A$ be such that whenever $\b_1 \equiv_A \b_2$,
    %there
    %exists an infinite indiscernible sequence containing $\b_1$ and
    %$\b_2$
    then $\lstp(\b_1/A) = \lstp(\b_2/A)$ (e.g. $A$ is a model; one can show that in a dependent theory,
    assuming $A = \bdd^{heq}(A)$ and $\tp(\b_1/A)$ does not fork over $A$ is
    enough).

    %; if $T = T^{eq}$, this is equivalent
    %to $A = \bdd(A)$).
%\begin{enumerate}
%\item
%    Let $B \supseteq A$, $p \in
%    \tS(B)$ splits over $A$, then $p$ divides, and therefore forks
%    over $A$.
%\item
    Let $M$ be a $(|A|+\aleph_0)^+$-saturated model containing $A$,
    $p \in \tS(M)$.
    Then $p$ does not split over $A$  if and only if $p$ does
    not fork over $A$ if and only if $p$ does not divide over $A$.
\end{enumerate}
\end{fct}
%\begin{prf}
%    Both parts are easy; see \cite{Sh783} Observation 5.4 and Claim 5.10.
%\end{prf}

One can find much information about the connections between
different ``pre-independence relations'' in dependent theories in
Adler \cite{Ad}.

\begin{cor}\label{cor:fewnonfork}
    There are boundedly many global types which do not fork over a given set $A$.
\end{cor}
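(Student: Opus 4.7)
The plan is to reduce the corollary to Observation \ref{obs:fewnonsplit} --- which bounds by $2^{2^{|A|+|T|}}$ the number of global types that do not Lascar-split over $A$ --- by showing that every global type $p$ that does not fork over $A$ is automatically non-Lascar-splitting over $A$. Once this implication is established, the same bound transfers immediately to the nonforking case.

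To prove the implication, I would argue by contradiction. Suppose $p$ is a global type that does not fork over $A$ but Lascar-splits over $A$, witnessed by tuples $\b, \c \in \fC$ with $\lstp(\b/A) = \lstp(\c/A)$ and a formula $\ph(\x, \y)$ such that $\ph(\x, \b) \wedge \neg \ph(\x, \c) \in p$. Choose a $(|A| + \aleph_0)^+$-saturated model $M$ containing $A \cup \b \cup \c$; this is possible since $\b, \c$ are finite tuples. The restriction $q = p \rest M$ lies in $\tS(M)$, contains both $\ph(\x,\b)$ and $\neg \ph(\x,\c)$, and the Lascar equivalence $\lstp(\b/A) = \lstp(\c/A)$ is inherited since it is computed in $\fC$. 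Hence $q$ Lascar-splits over $A$. By Fact \ref{fct:splitfork}(1) applied to $M$, this means $q$ forks over $A$, and therefore so does its extension $p$ --- a contradiction.

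The core of the argument is Fact \ref{fct:splitfork}(1); everything else is bookkeeping. The one mildly delicate point is the choice of $M$: one must place both splitting witnesses $\b, \c$ inside $M$ while keeping its saturation level at exactly $(|A| + \aleph_0)^+$, which is the hypothesis of Fact \ref{fct:splitfork}(1). Since $\b$ and $\c$ are finite tuples this presents no obstacle, and I do not anticipate any real difficulty in carrying out the argument --- the corollary is essentially a direct combination of Fact \ref{fct:splitfork}(1) and the counting in Observation \ref{obs:fewnonsplit}.
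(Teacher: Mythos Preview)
Your proposal is correct and follows essentially the same strategy as the paper: reduce to Observation~\ref{obs:fewnonsplit} by showing that a global nonforking type does not (Lascar/strongly) split over $A$. The paper's proof is the one-line ``By Observation~\ref{obs:fewnonsplit} and Fact~\ref{fct:strongsplit}'': since strong splitting implies dividing (hence forking), a global nonforking type does not strongly split over $A$, and the bound from Observation~\ref{obs:fewnonsplit} applies directly. Your detour through a saturated model and Fact~\ref{fct:splitfork}(1) is unnecessary --- Fact~\ref{fct:strongsplit} already gives the implication for arbitrary types, so you can skip the restriction to $M$ entirely --- but the argument is sound and the underlying idea is the same.
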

\begin{prf}
    By Observation \ref{obs:fewnonsplit} and Fact \ref{fct:strongsplit}.
\end{prf}

\begin{rmk}
    Note that while Observation \ref{obs:fewnonsplit} is true in any
    theory, Corollary \ref{cor:fewnonfork} does not have to be true
    in a theory with the independence property, even if forking
    behaves nicely in it (e.g. the theory of the random graph). In
    fact, in a simple theory, a type over a model with a bounded
    number of global nonforking extensions is stationary, see e.g.
    \cite{Wa}, Lemma 2.5.15.
\end{rmk}

\begin{dfn}
\begin{enumerate}
\item
    Let $O$ a linear order, $A$ a set. We call a sequence $I =
    \inseq{\a}{i}{O}$ a \emph{nonsplitting/nonforking sequence over $A$} if it is
    an indiscernible sequence over $A$ of realizations of $p$ and
    $\tp(\a_i/A\a_{<i})$ does not split (respectively, fork) over $A$ for all $i \in O$.
\item
    If a sequence $I$ is indiscernible over $B$ and nonsplitting/nonforking
    over $A \subseteq B$, we sometimes say that $I$ is \emph{based
    on} $A$.
\item
    Let $p \in \tS(B)$ be a type. We call a sequence $I$
    a \emph{nonsplitting/nonforking sequence in} $p$ if it is a nonsplitting
    (respectively, nonforking)
    sequence over $B$ of realizations of $p$. We say that it is a
    sequence in $p$ \emph{nonsplitting/nonforking over $A$} (or based on $A$)
    if it is a sequence of realizations of $p$ indiscernible over
    $B$ and nonsplitting (respectively, nonforking) over $A$.
\end{enumerate}
\end{dfn}

The following fact is a well-known:
\begin{fct}\label{fct:splitind} (No use of dependence)
    Let $I = \lseq{\a}{i}{\lam}$ be such that
    \begin{itemize}
    \item
        $\tp(\a_i/A\a_{<i})$ does not split over $A$
    \item
        $\tp(\a_i/A\a_{<i}) = \tp(\a_j/A\a_{<i})$ for every $j \ge
        i $.
    \end{itemize}
    Then $I$ is a nonsplitting sequence over $A$ (that is, it is
    indiscernible over $A$).
\end{fct}

We will need the following slight modification of the Fact above.
% clearly follows if
%$A$ is replaced with $\bdd^{eq}(A)$.
We present the short proof for completeness.
\begin{obs}\label{obs:splitind} (No use of dependence)
    Let $I = \lseq{\a}{i}{\lam}$ be such that
    \begin{itemize}
    \item
        $\tp(\a_i/A\a_{<i})$ does not Lascar-split over $A$
    \item
        $\Lstp(\a_i/A\a_{<i}) = \Lstp(\a_j/A\a_{<i})$ for every $j \ge
        i $.
 %   \item
 %       $\Lstp(\a_i/A)$ is fixed for all $i$.
    \end{itemize}
    Then $I$ is a indiscernible over $A$.
\end{obs}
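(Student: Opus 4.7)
The plan is to mimic the proof of Fact \ref{fct:splitind} with ``nonsplitting'' replaced by ``non-Lascar-splitting'' and ``type'' upgraded to ``Lascar strong type''. Concretely, I will show by induction on $n$ that for any two strictly increasing $n$-tuples $\bar i = (i_1,\ldots,i_n)$ and $\bar j = (j_1,\ldots,j_n)$ in $\lam$ one has
\[
\Lstp(\a_{i_1}\ldots\a_{i_n}/A) = \Lstp(\a_{j_1}\ldots\a_{j_n}/A),
\]
which in particular yields equality of ordinary types and hence indiscernibility of $I$ over $A$.

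The base case $n=1$ is immediate from hypothesis (2) (restricted from $A\a_{<i}$ to $A$, with $i$ the smaller of the two indices). For the inductive step, fix $\bar i,\bar j$, pick $k > \max(i_n,j_n)$, and set $\bar i' = (i_1,\ldots,i_{n-1})$, $\bar j' = (j_1,\ldots,j_{n-1})$, $\bar i^* = (\bar i',k)$, $\bar j^* = (\bar j',k)$. Hypothesis (2) applied at index $i_n$ with $j=k$ provides an automorphism in $\Aut(\fC/\bdd^{heq}(A\a_{<i_n}))$ sending $\a_{i_n}$ to $\a_k$; it fixes $\bdd^{heq}(A)$ and each $\a_{i_\ell}$ with $\ell<n$, so it witnesses $\Lstp(\a_{\bar i}/A) = \Lstp(\a_{\bar i^*}/A)$, and the same holds for $\bar j$. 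It therefore suffices to show $\Lstp(\a_{\bar i'}\a_k/A) = \Lstp(\a_{\bar j'}\a_k/A)$. By the inductive hypothesis $\Lstp(\a_{\bar i'}/A) = \Lstp(\a_{\bar j'}/A)$, and since $\a_{\bar i'},\a_{\bar j'} \subseteq \a_{<k}$, hypothesis (1)---the non-Lascar-splitting of $\tp(\a_k/A\a_{<k})$ over $A$---lets us transfer this equality after adjoining $\a_k$.

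The single subtlety I expect to have to argue with care is precisely this last transfer step: taken literally, non-Lascar-splitting only equates formulas $\ph(\x,\a_{\bar i'})$ with $\ph(\x,\a_{\bar j'})$ modulo $\tp(\a_k/A\a_{<k})$, so it delivers equality of ordinary types (not Lascar strong types) over $A$ after adjoining $\a_k$---yet same Lascar strong type is exactly what keeps the induction alive. The cleanest way around this is to work systematically over the hyperimaginary base $\bdd^{heq}(A)$ throughout: hypothesis (1) is equivalent to ordinary nonsplitting of $\tp(\a_i/A\a_{<i})$ over $\bdd^{heq}(A)$, and hypothesis (2) to $\tp(\a_i/\bdd^{heq}(A)\a_{<i}) = \tp(\a_j/\bdd^{heq}(A)\a_{<i})$ for $j \ge i$, so the verbatim proof of Fact \ref{fct:splitind}, applied with base $\bdd^{heq}(A)$, already produces indiscernibility of $I$ over $\bdd^{heq}(A)$, and hence over $A$.
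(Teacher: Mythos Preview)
Your argument is correct and is essentially the paper's proof: the same induction on tuple length, using hypothesis (2) to align the last coordinate and hypothesis (1) together with the induction hypothesis to handle the initial segment. You are in fact more explicit than the paper about why the inductive step preserves \emph{Lascar strong} types rather than merely types---the paper simply asserts it---and your reformulation over $\bdd^{heq}(A)$ is a clean way to see this (note, though, that hypothesis (2) is strictly stronger than, not equivalent to, $\tp(\a_i/\bdd^{heq}(A)\a_{<i}) = \tp(\a_j/\bdd^{heq}(A)\a_{<i})$, since $\bdd^{heq}(A\a_{<i}) \supseteq \bdd^{heq}(A)\a_{<i}$; the implication you need goes the right way). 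One small fix: your choice of $k > \max(i_n,j_n)$ need not exist when $\lambda$ is a successor ordinal, so take $k = \max(i_n,j_n)$ instead---this is exactly the paper's ``wlog $j_k \ge i_k$'' move.
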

\begin{prf}
    The classical proof works, namely: we prove by induction on
    $k$ that $\Lstp(\a_{i_1}\ldots\a_{i_k}/A) =  \Lstp(\a_{j_1}\ldots\a_{j_k}/A)$
    for every $i_1<\ldots <i_k, j_1<\ldots<j_k$. For $k=1$ this is given.

    For $k>1$, assume wlog $j_k\ge i_k$. By the assumption
    $\Lstp(\a_{j_k}/A\a_{i_1}\ldots\a_{i_{k-1}}) = \Lstp(\a_{i_k}/A\a_{i_1}\ldots\a_{i_{k-1}})$.
    By the induction hypothesis $\Lstp(\a_{i_1}\ldots\a_{i_{k-1}}/A) =
    \lstp(\a_{j_1}\ldots\a_{j_{k-1}}/A)$ and by the lack of Lascar splitting
    $\Lstp(\a_{j_k}/A\a_{i_1}\ldots\a_{i_{k-1}}) = \Lstp(\a_{j_k}/A\a_{j_1}\ldots\a_{j_{k-1}})$,
    which completes the proof.
%    $\Lstp(\a_{j_k}\a_{i_1}\ldots\a_{i_{k-1}}/A) = \Lstp(\a_{i_k}a_{i_1}\ldots\a_{i_{k-1}}/A)$
\end{prf}

%\begin{rmk}\label{rmk:spltind}
%    We would like to point out that the additional assumption of all the $\a_i$'s having the
%    same Lascar strong type in the Lemma above is not very important: for example, if $\lamm$
%    is big enough (and e.g. regular), this will be true for a cofinal subsequence of $I$.
%\end{rmk}

%Note that every indiscernible \emph{set} is automatically a
%nonsplitting sequence:

%\begin{obs}\label{obs:setsplit}
%    If $I$ is an indiscernible set in $p$, then $I$ is a
%    nonsplitting sequence.
%\end{obs}
%\begin{prf}
%        By Observation \ref{obs:Avnonsplit}.
%\end{prf}

%Observation \ref{obs:setsplit} witnesses a flaw of the notion of
%nonsplitting, and, as we shall see when considering different
%examples in section 6, is not true for nonforking. This is why we
%will mostly be working with nonforking sequences.
%The following easy
%but useful fact is the analogue of Observation \ref{obs:Avnonsplit}.
%Note, though, that it is a stronger
%version: the \emph{global} extensions turn out to be nonforking, which is
%generally false for nonsplitting sequences.

%In fact, this mostly witnesses one of the flaws of the
%notion of nonsplitting and nonsplitting sequences.

\begin{obs}\label{obs:Avnonfork}
    Let $I = \lseq{\b}{i}{\om}$ be a nonforking sequence
    in $p \in \tS(A)$. Then $\Av(I,I\cup A)$ is a nonforking
    extension of $p$.
\end{obs}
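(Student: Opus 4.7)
The plan is to show directly that no formula in $\Av(I, I\cup A)$ forks over $A$, which suffices since a type forks over $A$ exactly when it contains a forking formula.

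First I would check that $\Av(I, I\cup A)$ extends $p$: for any $\ph(\x,\a) \in p$ with $\a \in A$, every $\b_i$ realizes $p$, so $\models \ph(\b_i,\a)$ for all $i < \om$, and in particular $\set{i : \neg\ph(\b_i,\a)} = \emptyset$ is bounded in $\om$, so $\ph(\x,\a) \in \Av(I, I\cup A)$.

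The main step is non-forking. Let $\ph(\x,\c) \in \Av(I, I\cup A)$ be arbitrary, where $\c$ is a tuple from $I\cup A$; say the indices from $I$ that appear in $\c$ are all below some $j < \om$, so that $\c \in A\b_{<j}$. By the definition of the average type, $\set{i < \om : \neg\ph(\b_i, \c)}$ is finite, so we may pick some $i \ge j$ with $\models \ph(\b_i, \c)$. Then $\c \in A\b_{<i}$, so $\ph(\x,\c) \in \tp(\b_i / A\b_{<i})$. Since $I$ is a nonforking sequence in $p$, this type does not fork over $A$; hence the formula $\ph(\x,\c)$ does not fork over $A$ either. Thus no formula in $\Av(I, I\cup A)$ forks over $A$, and therefore the type itself does not fork over $A$.

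I do not anticipate any real obstacle here: the argument is essentially the observation that ``nonforking of a formula'' is monotone under passing to sub-types, combined with the trivial fact that tuples from $I\cup A$ all lie in some $A\b_{<i}$ for $i$ large enough. The proof uses neither dependence nor any subtlety about the distinction between forking and dividing; it works just from the definition of a nonforking sequence and the definition of the average type. The only thing to be slightly careful about is that $\Av(I, I\cup A)$ is indeed a complete type over $I\cup A$ (so that every formula over $I\cup A$ is ``in'' it in one form or another), which was already recorded in the earlier remark right after the definition of the average type.
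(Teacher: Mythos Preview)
Your proof is correct and follows essentially the same line as the paper's own argument: any formula in the average type lies in $\tp(\b_i/A\b_{<i})$ for all sufficiently large $i$, and these types do not fork over $A$ by assumption. You are simply a bit more explicit than the paper (verifying that the average extends $p$ and tracking where the parameters live), but the core idea is identical.
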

\begin{prf}
    Let $\ph(\x,\b) \in \Av(I,I\cup A)$ (with $\b \in I$). By
    the definition of the average type, $\ph(\x,\b) \in \tp(\b_k/A\b_{<k})$
    for almost all $k<\om$. Since $I$ is nonforking, $\ph(\x,\b)$ does not fork
    over $A$.
\end{prf}

%The following notion is not very important in our context, and we
%only give it in order to state certain claims in full generality.

%\begin{dfn}
%\begin{enumerate}
%\item
%    We say that a type $p \in \tS(A)$ has the \emph{nonsplitting extension
%    property} if there exists a nonsplitting extension of $p$ to
%    every superset of $A$.
%\item
%    We say that a set $A$ has the \emph{nonsplitting extension
%    property} if every type over $A$ has the nonsplitting extension
%    property.
%\end{enumerate}
%\end{dfn}

%The following remark can be easily omitted:

%\begin{rmk}
%\begin{enumerate}
%  \item In any theory $T$, if a type $p \in \tS(A)$ is finitely satisfiable in $A$ then it
%  has the nonsplitting extension property.
%  \item In any theory $T$, a model has the nonsplitting extension
%  property.
%  \item In a dependent theory, any set $A$ with $A = \bdd(A)$ has
%  the nonsplitting extension property.
%\end{enumerate}
%\end{rmk}
%\begin{prf}
%    (i) is easy, but we will show it explicitly in Section 3; (ii)
%    follows immediately. (iii) is a consequence of Fact
%    \ref{fct:splitfork} and the existence of nonforking extensions.
%\end{prf}

Fact \ref{fct:splitfork} shows that working over slightly saturated models provides
us with a very nice picture; unfortunately, this is not the case if one is interested in
types over sets (even models), as we shall see for instance in section 6 of the article.
This is why we need both nonforking and nonsplitting for slightly different purposes. As
the reader will see later, we believe that nonforking plays a deeper role.
 A major advantage of nonforking over nonsplitting is existence
of nonforking extensions, which is well-known and very useful:

\begin{fct}\label{fct:nonforkexist} (No use of dependence)
    Let $p$ be a partial type over a set $B$ which does not fork over $A \subseteq B$.
    Then there exists $p \in \tS(B)$ which does not fork over $A$.
\end{fct}

\begin{rmk}
    We will normally use Fact \ref{fct:nonforkexist} when $p \in \tS(A')$,
    $A \subseteq A' \subseteq B$.
\end{rmk}

It is natural to ask which types have existence and/or uniqueness of
nonsplitting extensions. The following general fact will become
useful later:

%Stationarity is obviously a very strong notion. Non-stationary types
%still have ``stationarity'' with respect to certain kinds of
%extensions. The following lemma is an example of such phenomenon
%which will become useful later:

\begin{lem}\label{lem:unique} (No use of dependence)
%\begin{enumerate}
%\item

  Assume $A \subseteq M$, $M$ is $(|A|+\aleph_0)^+$-saturated, and $p \in \tS(M)$
  does not split over $A$. Then for every $M \subseteq B$ there is a unique
  extension of $p$ to $B$ which does not split over $A$.
%\item
%  Assume $A \subseteq M$, $M$ is $|A|^+$-saturated, and $\mu \in \fT(M)$
%  does not split over $A$. Then for every $M \subseteq B$ there is a unique
%  extension of $\mu$ to $\Def(\fC,B)$ which does not split over $A$.
%\end{enumerate}
\end{lem}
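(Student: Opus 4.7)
The plan is to construct the extension directly using the saturation of $M$ to reduce questions about parameters from $B$ to questions about parameters from $M$, where $p$ already decides everything.

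For existence, I would define $q \in \tS(B)$ as follows. Given a formula $\ph(\x, \c)$ with $\c \in B$, use saturation of $M$ to pick $\c' \in M$ with $\c' \equiv_A \c$ (possible since $|A\c| \le |A| + \aleph_0$), and declare $\ph(\x,\c) \in q$ iff $\ph(\x,\c') \in p$. The choice of $\c'$ doesn't matter: if $\c''$ is another such realization, then $\c' \equiv_A \c''$ (both in $M$), so nonsplitting of $p$ over $A$ gives $\ph(\x,\c') \in p \iff \ph(\x,\c'') \in p$. Completeness of $q$ at a given formula is immediate from completeness of $p$. For consistency of a finite conjunction $\bigwedge_i \ph_i(\x,\c_i)$, use saturation once more to realize the type $\tp(\c_1 \ldots \c_n / A)$ in $M$ by a single tuple $\c_1' \ldots \c_n' \in M$; then $\ph_i(\x,\c_i') \in p$ for every $i$, so the conjunction is consistent because $p$ is. To see $q$ extends $p$, note that for $\c \in M$ one can take $\c' = \c$.

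Nonsplitting of $q$ over $A$ follows by the same realize-in-$M$ trick: given $\d_1, \d_2 \in B$ with $\d_1 \equiv_A \d_2$, saturation gives $\d_1', \d_2' \in M$ with $\d_1'\d_2' \equiv_A \d_1\d_2$, and then
\[
\ph(\x,\d_1) \in q \iff \ph(\x,\d_1') \in p \iff \ph(\x,\d_2') \in p \iff \ph(\x,\d_2) \in q,
\]
where the middle equivalence uses $\d_1' \equiv_A \d_2'$ and nonsplitting of $p$ over $A$.

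For uniqueness, suppose $q_1, q_2 \in \tS(B)$ both extend $p$ and do not split over $A$. For any formula $\ph(\x,\c)$ with $\c \in B$, pick $\c' \in M$ with $\c' \equiv_A \c$. Then for $i = 1, 2$, nonsplitting of $q_i$ gives $\ph(\x,\c) \in q_i \iff \ph(\x,\c') \in q_i$, and since $q_i$ extends $p$, the latter is $\ph(\x,\c') \in p$, which doesn't depend on $i$. Hence $q_1 = q_2$.

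The only mildly subtle point is handling several parameters at once when proving consistency of $q$ and invariance of $q$: one must realize the \emph{joint} type of the tuples over $A$ in $M$, not each parameter individually, and this is where the $(|A|+\aleph_0)^+$-saturation hypothesis (as opposed to mere $|A|^+$-saturation for single tuples) is used, since finite tuples over $A$ remain of cardinality $\le |A|+\aleph_0$.
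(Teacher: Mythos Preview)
Your proof is correct and follows essentially the same approach as the paper: the uniqueness argument is identical, and for existence both use saturation of $M$ to realize $\tp(\c/A)$ inside $M$ and read the answer off from $p$ via nonsplitting. The paper packages the existence half slightly differently (a compactness argument with auxiliary variables $\y_\b$ for each finite $\b\in B$, followed by an automorphism over $M$), but the content is the same as your direct construction of $q$.
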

\begin{prf}
%\begin{enumerate}
%\item
    For existence, for every finite tuple $\b \in B$ introduce a tuple of variables $\y_\b$ of
    the same length and denote $r_\b(\y_\b) = \tp(\b/M)$.

    Let

    $$\Sigma = \bigcup_{\b \in B}r_\b(\y_\b)$$

    and

    $$\Gamma = p(\x) \cup \Sigma \cup
    \set{\ph(\x,\y_\b) \leftrightarrow \ph(\x,\y'_{\b'}) \colon \ph(\x,\y) \in L, \tp(\b/A) = \tp(\b'/A)}$$

    For every finite subset $B_0$ of $B$ find $B'_0
    \subseteq M$ satisfying the same type over $A$, and choose $\a'
    \models p\rest A\cup B'_0$. As $p \rest A\cup B'_0$ does not
    split over $A$, clearly $\ph(\a',\b') \leftrightarrow
    \ph(\a',\b'')$ for every $\b',\b'' \in B'_0$ satisfying the same
    type over $A$. This shows that $\Gamma$ is finitely satisfiable
    in $M$, and therefore consistent. By applying an automorphism
    over $M$, we are done.

  For uniqueness, let $\ph(\x,\y)$ be a formula and $\b \in B$.
  Let $\b' \in M$ realize $\tp(\b/A)$. Clearly, any nonsplitting extension
  of $p$ to $B$ chooses $\ph(\x,\b)$ if and only if $\ph(\x,\b')\in p$.
%\item
%  Similar.
%\end{enumerate}
\end{prf}

%We will show that stable types over algebraically closed sets are
%stationary. The following lemma will be a step towards this fact.

Note that even the existence in the lemma above can not be taken for
granted (if we do not work over an $|A|$-saturated model), even if
$T$ is dependent, $A$ itself is a (saturated) model, and $p\rest A$
is generically stable. See more in Discussion \ref{dsc:defsplit} and Example
\ref{exm:FDO}.

%It is important to point out that average types are ``stationary'' in a very strong sense:
Another case of uniqueness of nonsplitting extensions occurs for average types:

\begin{lem}\label{lem:seqstat}
%\begin{enumerate}
%\item
%    Let $I = \lseq{\b}{i}{\om}$ be an indiscernible set which is also a nonsplitting sequence.
%    Denote $p = \Av(I,\union I)$. Then $p$ has
%   a unique nonsplitting extension over $\fC$.
%%\item
%%    Let $I = \lseq{\b}{i}{\om}$ be an indiscernible set over $A$ which is also a
%%    nonforking sequence. Denote $p = \Av(I,A\union I)$.
%%    Assume that $p^*$ is a global extension of $p$ which does not fork over $A$. Then
%%    $p^* = \Av(I,\fC)$.
%\item
    Let $I = \lseq{\b}{i}{\om}$ be an indiscernible set over $A$ which is also a
    nonsplitting sequence. Denote $p = \Av(I,A\union I)$.
    Assume that $q$ is a global extension of $p$ which does not split over $A$. Then
    $q = \Av(I,\fC)$.
%%    at most nonsplitting extension over $\fC$.
%\end{enumerate}
\end{lem}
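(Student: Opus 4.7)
The plan is to argue by contradiction. Assume $q \ne \Av(I, \fC)$, so there exist $\ph(\x,\y)$ and $\c \in \fC$ with $\ph(\x,\c) \in q$ but $\ph(\x,\c) \notin \Av(I,\fC)$ (after possibly replacing $\ph$ by $\neg\ph$). Since $I$ is an indiscernible set, Observation \ref{obs:stabform} forces $\set{i<\om \colon \ph(\b_i,\c)}$ to be finite. My aim is to build, inside $\fC$, an indiscernible set over $A$ extending $I$ on which $\ph(\x,\c)$ has infinitely many realizations; any such set contradicts the uniform bound of Observation \ref{obs:stabform}.

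Using saturation of $\fC$, I would inductively pick $\d_n \models q \rest (A \cup I \cup \d_{<n} \cup \set{\c})$ for $n<\om$; each $\d_n$ realizes $\ph(\x,\c)$ as well as $p$, so in particular $\d_n \equiv_A \b_0$. Setting $J_n = I \cup \set{\d_0,\dots,\d_{n-1}}$, the technical heart of the argument is to show by induction on $n$ that $J_n$ is an indiscernible set over $A$.

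The key ingredient is the following general observation: for any indiscernible set $J \supseteq I$ over $A$ each of whose elements is $\equiv_A \b_0$, one has $q \rest (A \cup J) = \Av(J, A \cup J)$. Indeed, for a formula $\psi(\x,\d)$ with parameters in $A \cup J$, nonsplitting of $q$ over $A$ makes $\psi(\x,\d) \in q$ depend only on $\tp(\d/A)$, and the hypothesis reduces this to the behavior of $\psi(\x,\cdot)$ on $A \cup \set{\b_0}$, hence to membership in $p$; the set-indiscernibility of $J$ performs exactly the same reduction for $\Av(J,\cdot)$. Applied to $J_n$, this gives $\d_n \models \Av(J_n, A \cup J_n)$, and an auxiliary sub-lemma---any realization of the average type of an indiscernible set $J$ over $A$ extends $J$ to an indiscernible set over $A$---then produces the step $J_n \to J_{n+1}$.

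Taking unions, $J = I \cup \set{\d_n \colon n<\om}$ is an indiscernible set over $A$ on which $\ph(\x,\c)$ holds for infinitely many $\d_n$'s and fails on cofinitely many $\b_i$'s, directly contradicting Observation \ref{obs:stabform}. The main obstacle is the equality $q \rest (A \cup J_n) = \Av(J_n, A \cup J_n)$ at each inductive step, which requires the joint strength of the two hypotheses on $I$ together with global nonsplitting of $q$, as well as the verification of the auxiliary sub-lemma on adding one realization of the average type to an indiscernible set.
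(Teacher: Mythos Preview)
Your proposal is correct and follows essentially the same strategy as the paper: assume $\ph(\x,\c)\in q\setminus\Av(I,\fC)$, build a sequence $J$ of realizations of $q$ over $A\cup I\cup\{\c\}$, show that $I^\frown J$ is an $A$-indiscernible set, and contradict Observation~\ref{obs:stabform}. The only difference is in how indiscernibility is established: the paper gets it in one step by invoking Fact~\ref{fct:splitind} (both $I$ and $J$ are nonsplitting over $A$, and one checks $\b_i\equiv_{A\b_{<i}}\b'_j$ since $\b'_j\models p$), whereas you verify set-indiscernibility inductively via the identity $q\rest(A\cup J_n)=\Av(J_n,A\cup J_n)$ and the fact that a realization of the average extends an indiscernible set.
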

\begin{prf}
%    Clearly $p$ is stable.

%\begin{enumerate}
%\item
%    Clearly the first part follows from the second with $A = \emptyset$
%    since
%    %$I$ is a nonforking sequence, and so $\Av(I,\cup I)$ has a
%    %global nonforking extension.
%    $p^* = \Av(I,\fC)$ is a nonsplitting extension of $p$.
%\item

    Denote $B = A\union I$.
    %Let $\b' \models p$, $M$ an $\aleph_1$-saturated
    %model, $M \supseteq B$ such that
    %$q=\tp(\b'/M)$ does not split over $A$. By Lemma \ref{lem:unique}
    %it is enough to show that $q = \Av(I,M)$.
    Let $\ph(\x,\bar c) \in
    q$, and assume towards contradiction $\neg\ph(\x,\bar c)\in
    \Av(I,\fC)$, so
    \begin{equ}\label{equ:seqstat:1}
    $\neg\ph(\b_i,\bar c)$ holds for almost all $i<\om$.
    \end{equ}
     Let $J  = \lseq{\b'}{i}{\om} \subseteq M$ be a nonsplitting sequence in $q$ over $BI\bar c$
     (that is, choose $\b'_i \models q\rest BI\bar c\b'_{<i}$).
     Clearly
     \begin{equ}\label{equ:seqstat:2}
     $\ph(\b'_i,\bar c)$ holds for all $i<\om$.
     \end{equ}
    \begin{clm}
        $I^\frown J$ is indiscernible.
    \end{clm}
    \begin{proof}
        Since $I$ is a nonsplitting sequence and $J$ is nonsplitting
        over $BI$, both based on $A$,
        %and clearly $J$ is indiscernible over $I$,
        it is
        enough to show that for every $i,j<\om$ $\b_i
        \equiv_{A\b_{<i}} \b'_j$, see Fact \ref{fct:splitind}. But this is also clear as $\b'_j \models \Av(I,A\union
        I)$ and $I$ is indiscernible over $A$.
    \end{proof}

    Combining all of the above, since $I$ is an indiscernible set, we clearly get a contradiction to
    dependence.

\end{prf}

Following the lemma above, one might want to define stationary types
as those having a unique nonsplitting extension over $\fC$. We will
see later (e.g. Discussion \ref{dsc:defsplit}) that this definition
is wrong, even for generically stable types, one reason being
precisely that nonsplitting types do not have to have global
nonsplitting (invariant) extensions. Therefore nonforking gives rise
to a better notion of stationarity.
%see also Discussion \ref{dsc:forkstat}.
%The most convenient way to define stationarity in our context seems, however, to use the notion of
%definability (so we restrict ourselves to definable types).

%\begin{dfn}
%    We call a definable type $p \in \tS(A)$ \emph{stationary} if it has a
%    unique (up to equivalence) definition. In other words, a type is stationary if
%    it has a unique extension to a type over $\fC$ definable over $A$.
%\end{dfn}

\begin{dfn}
    We call a type $p \in \tS(A)$ \emph{stationary} if it has a
    unique \emph{nonforking} extension to any superset of $A$.
\end{dfn}

Let us prove an analogue of Lemma \ref{lem:seqstat} for nonforking.
It is probably the central result of this section.
%also provides us
%with the following useful fact:

First we need to ``improve'' Fact \ref{fct:strongsplit} slightly
adjusting it to our purposes. Note that we weaken both the
assumption and the conclusion (but forking in the conclusion is
really all we need).

\begin{obs}\label{obs:lascarsplit}
    Lascar splitting implies forking.
\end{obs}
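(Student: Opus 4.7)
The plan is to reduce Lascar splitting to repeated applications of strong splitting along the chain witnessing equality of Lascar strong types, and then invoke Fact \ref{fct:strongsplit}.

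Suppose $p \in \tS(B)$ Lascar-splits over $A$. Then there exist $\b,\c \in B$ with $\lstp(\b/A) = \lstp(\c/A)$ and a formula $\ph(\x,\y)$ such that $\psi(\x,\b,\c) := \ph(\x,\b)\land\neg\ph(\x,\c)$ belongs to $p$. By definition of Lascar strong type there is a finite chain $\b = \b_1, \b_2, \ldots, \b_{k+1} = \c$ where consecutive tuples $\b_i, \b_{i+1}$ are of Lascar distance $1$ over $A$, i.e.\ lie on a common $A$-indiscernible sequence. It suffices to show that $\psi(\x,\b,\c)$ forks over $A$.

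The key observation is the following tautology: if $\ph(\x,\b_1)$ holds and $\neg\ph(\x,\b_{k+1})$ holds, then there must be some index $i \in \{1,\ldots,k\}$ at which the truth value switches, so
\[
\psi(\x,\b,\c) \;\vdash\; \bigvee_{i=1}^{k}\bigl(\ph(\x,\b_i)\land\neg\ph(\x,\b_{i+1})\bigr).
\]
Each disjunct $\ph(\x,\b_i)\land\neg\ph(\x,\b_{i+1})$ is exactly the statement that the (partial) type $\{\ph(\x,\b_i), \neg\ph(\x,\b_{i+1})\}$ strongly splits over $A$, since $\b_i,\b_{i+1}$ were chosen on a common $A$-indiscernible sequence. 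By Fact \ref{fct:strongsplit}, each such formula divides over $A$. Thus $\psi(\x,\b,\c)$ implies a finite disjunction of formulas each dividing over $A$, i.e.\ it forks over $A$; since $\psi(\x,\b,\c) \in p$, so does $p$.

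There is really no serious obstacle here: the whole content is the observation that the chain witnessing finite Lascar distance lets us replace a single instance of Lascar splitting with a finite disjunction of instances of strong splitting, and Fact \ref{fct:strongsplit} does the rest. The only point worth noting is that although the intermediate $\b_i$ need not be in $B$, this is irrelevant because the definition of forking speaks of a formula implying a disjunction of dividing formulas with arbitrary parameters from $\fC$.
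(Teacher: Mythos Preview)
Your proof is correct and takes a genuinely different, more direct route than the paper's. The paper argues by contradiction: assuming $p$ does not fork over $A$, it extends $p$ to a global nonforking type $q$ via Fact \ref{fct:nonforkexist}, observes that $q$ still Lascar-splits, and then invokes Observation \ref{obs:Lascarstrong} (over a sufficiently saturated model Lascar splitting and strong splitting coincide) together with Fact \ref{fct:strongsplit} to reach a contradiction. You instead work directly with the witnessing formula: the finite chain realizing equality of Lascar strong types lets you write $\psi(\x,\b,\c)$ as implying a finite disjunction of ``strongly splitting'' formulas, each of which divides by (the proof of) Fact \ref{fct:strongsplit}, so $\psi$ itself forks. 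Your argument is more elementary and more explicit --- it identifies the forking formula and avoids both the passage to a global nonforking extension and the saturation step hidden in Observation \ref{obs:Lascarstrong}; in effect you are redoing that observation at the level of formulas rather than types. The paper's version is shorter because it simply chains together facts already established.
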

\begin{prf}
    Let $p \in \tS(B)$ Lascar split over $A$, and assume it does not
    fork over $A$. By Fact \ref{fct:nonforkexist} there exists a
    global type $q$ extending $p$ which does not fork over $A$.
    Being an extension of $p$, it clearly Lascar splits over $A$, hence strongly splits by
    Observation \ref{obs:Lascarstrong}; a contradiction to Fact
    \ref{fct:strongsplit}.
\end{prf}

%\begin{rmk}\label{rmk:Lascarfork}
%    Let $I = \lseq{\b}{i}{\om}$ be a nonforking sequence over $A$.
%    Then $\tp(\b_i/A\b_{<i})$ does not Lascar-split over $A$ for all
%    $i$ (in other words, $I$ is a Lascar-nonsplitting sequence).
%\end{rmk}
%\begin{prf}
%    Assume not for some $i$. Let $q_i$ be a global type extending $\tp(\b_i/A\b_{<i})$ which
%    does not fork over $A$ (there exists such by Fact \ref{fct:nonforkexist}
%    and $I$ being nonforking). Clearly it still Lascar-splits over
%    $A$ and therefore strongly splits by Observation
%    \ref{obs:Lascarstrong}. But strong splitting implies forking by
%    Fact \ref{fct:strongsplit}, a contradiction.
%\end{prf}

\begin{lem}\label{lem:globalnonfork}
\begin{enumerate}
\item
    Let $I = \lseq{\b}{i}{\om}$ be a nonforking sequence over $A$ which is also an
    indiscernible set over $A$. Denote $p = \Av(I,A\union I)$.
    Let $p^*$ be a global extension of $p$ which does not fork over $A$. Then
    $p^* = \Av(I,\fC)$.
\item
    Let $I$ be a nonforking sequence over $A$ which is an indiscernible set over $A$.
    Then $\Av(I,\fC)$ does not fork over $A$.
%    Moreover, it is the unique
%    extension of $\Av(I,I\cup \bdd(A))$ which does not fork over $A$.
\end{enumerate}
\end{lem}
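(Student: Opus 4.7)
The plan is to follow the blueprint of Lemma \ref{lem:seqstat} with two key substitutions: replace each appeal to nonsplitting by Observation \ref{obs:lascarsplit} (``Lascar splitting implies forking,'' hence nonforking implies non-Lascar-splitting), and replace Fact \ref{fct:splitind} by its Lascar analogue Observation \ref{obs:splitind}. In this way nonforking plays the role that nonsplitting played there.

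For part (1), I would assume for contradiction $p^* \ne \Av(I,\fC)$ and pick $\ph(\x,\bar c) \in p^*$ with $\neg\ph(\x,\bar c) \in \Av(I,\fC)$, so $\ph(\b_i,\bar c)$ holds for only finitely many $i$. Mirroring Lemma \ref{lem:seqstat}, I then build an $\om$-sequence $J = \lseq{\b'}{j}{\om}$ realizing $p^* \rest (A \cup I \cup \bar c)$ (so $\ph(\b'_j,\bar c)$ holds for every $j$), taking care to ensure $J$ is indiscernible over $A \cup I \cup \bar c$; this can be done by picking $\b'_j \models p^* \rest (A \cup I \cup \bar c \cup \b'_{<j})$ for a sufficiently long initial segment and then extracting an indiscernible sub-sequence over $A \cup I \cup \bar c$ by a standard Ramsey-type argument. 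Since $\b'_j \models p = \Av(I, A \cup I)$, Remark \ref{rmk:avind} further gives that $I^{\frown}\b'_j$ is indiscernible over $A$.

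The main step is to verify that $I^{\frown}J$ is indiscernible over $A$ via Observation \ref{obs:splitind}. For each $\b_i$, the type $\tp(\b_i/A\b_{<i})$ does not fork over $A$ (as $I$ is a nonforking sequence), hence does not Lascar-split over $A$ by Observation \ref{obs:lascarsplit}; the Lascar strong type equality with any later $\b_l$ is immediate from $A$-indiscernibility of $I$, while the mixed equality $\Lstp(\b_i/A\b_{<i}) = \Lstp(\b'_j/A\b_{<i})$ is witnessed by the $A\b_{<i}$-indiscernible tail $\b_i,\b_{i+1},\ldots,\b'_j$ of $I^{\frown}\b'_j$. For each $\b'_j$, the type over $A \cup I \cup \b'_{<j}$ is a restriction of $p^*$, which does not fork over $A$ and so does not Lascar-split over $A$; the Lascar strong type equalities among later $\b'_l$'s follow from $J$ being indiscernible over $A \cup I \cup \bar c$, as this gives that $J_{\ge j}$ is indiscernible over $A \cup I \cup \bar c \cup \b'_{<j}$, whence Lascar distance $1$ (and therefore equal Lascar strong type) over the smaller base $A \cup I \cup \b'_{<j}$. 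The hard part is precisely this last Lascar strong type agreement: the Lascar-invariance of $p^*$ is only over $A$, so it does not by itself deliver agreement over $A \cup I \cup \b'_{<j}$; this is what forces us to pre-build $J$ as indiscernible over the larger set $A \cup I \cup \bar c$ rather than settling for a naive Morley sequence in $p^*$.

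Once $I^{\frown}J$ is indiscernible over $A$, the contradiction proceeds exactly as in the end of Lemma \ref{lem:seqstat}: since $I$ is an indiscernible \emph{set} over $A$, combining this with the sequence-indiscernibility of $I^{\frown}J$ yields a contradiction with Observation \ref{obs:stabform}/Fact \ref{fct:depend}, because $\ph(\b'_j,\bar c)$ holds for every $j$ while $\neg\ph(\b_i,\bar c)$ holds cofinitely in $I$. Part (2) is then immediate: by Observation \ref{obs:Avnonfork}, $p$ itself is a nonforking extension of $\tp(\b_0/A)$, so Fact \ref{fct:nonforkexist} produces some global nonforking extension $p^*$ of $p$, and part (1) identifies this $p^*$ as $\Av(I,\fC)$, which is therefore nonforking over $A$.
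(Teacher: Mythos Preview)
Your proposal is correct and follows essentially the same route as the paper: mimic Lemma~\ref{lem:seqstat}, replacing nonsplitting by non-Lascar-splitting via Observation~\ref{obs:lascarsplit}, and apply Observation~\ref{obs:splitind} to conclude that $I^{\frown}J$ is indiscernible; part~(2) is deduced from part~(1) exactly as you do, through Observation~\ref{obs:Avnonfork} and Fact~\ref{fct:nonforkexist}.

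The one substantive difference is your treatment of the indiscernibility of $J$. The paper simply takes $\b'_j \models p^*\rest B\bar c\b'_{<j}$ and asserts that the resulting sequence is indiscernible (``same for $J$''), implicitly relying on the general fact that a Morley sequence in a global Lascar-invariant type is indiscernible over any base containing $A$. You instead extract an indiscernible $J$ over $A\cup I\cup\bar c$ by an Erd\H{o}s--Rado/Ramsey argument, which is a legitimate and arguably more self-contained way to secure the Lascar-strong-type agreement $\Lstp(\b'_j/A\cup I\cup\b'_{<j}) = \Lstp(\b'_l/A\cup I\cup\b'_{<j})$ needed in Observation~\ref{obs:splitind}. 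One small wrinkle: after extraction, $\tp(\b'_j/A\cup I\cup\b'_{<j})$ need not literally be a restriction of $p^*$ (the extracted sequence is not a subsequence but only ``finitely based'' on the original), so your phrase ``is a restriction of $p^*$'' should be weakened to ``is conjugate over $A$ to a restriction of $p^*$, hence nonforking over $A$''---which is all you actually use.
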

\begin{prf}
The second part follows from the first since $\Av(I,A\cup I)$ does
not fork over $A$ by Observation \ref{obs:Avnonfork}, hence can be
extended to a global type which does not fork over $A$.

    For the first part, we are going to repeat the
    proof of Lemma \ref{lem:seqstat} replacing
    splitting with Lascar-splitting. Denote $B = A\union I$.
    Let $\ph(\x,\bar c) \in
    q$, and assume towards contradiction $\neg\ph(\x,\bar c)\in
    \Av(I,\fC)$, so
    \begin{equ}
    %\label{equ:seqstat:1}
    $\neg\ph(\b_i,\bar c)$ holds for almost all $i<\om$.
    \end{equ}
     Let $J  = \lseq{\b'}{i}{\om} \subseteq M$ be a nonsplitting sequence in $q$ over $B\bar c$
     (that is, choose $\b'_i \models q\rest B\bar c\b'_{<i}$).
     Clearly
     \begin{equ}
     %\label{equ:seqstat:2}
     $\ph(\b'_i,\bar c)$ holds for all $i<\om$.
     \end{equ}
    \begin{clm}
        $I^\frown J$ is indiscernible.
    \end{clm}
    The claim clearly suffices.

    In order to prove the claim, we will have to be a bit more
    careful than in Lemma \ref{lem:seqstat} and apply Observation
    \ref{obs:splitind}. So we have to argue that the sequence is
    Lascar-nonsplitting and Lascar strong type of an element over
    the previous ones is ``increasing''. Lascar-nonsplitting
    follows from nonforking by Observation \ref{obs:lascarsplit}. $I$ is an
    $A$-indiscernible sequence, so clearly Lascar strong type of an
    element is increasing, same for $J$. So it is again
    enough to show that for every $i,j<\om$ $\lstp(\b_i/A\b_{<i}) = \lstp(\b'_j/A\b_{<i})$.
     But $\b'_j \models \Av(I,A\union I)$, so it continues
    $I$; hence $\b'_j$ and $\b_i$ are of Lascar-distance 1 over
    $A\b_{<i}$.
\end{prf}

The following definition is standard:

\begin{dfn}
    We call two types $p$ and $q$ \emph{parallel} if they have a common
    nonforking extension; that is, if there exists a type $r$ which
    is a nonforking extension of both $p$ and $q$.
\end{dfn}

Since we'll be working a lot with definable types, the notion of a Morley sequence with respect
to a given definition will come handy:

\begin{dfn}\label{dfn:Morley}
  Let $p \in \tS(B)$ be a type definable over $A \subseteq B$ by a
  definition schema $d_p$, $O$ an order type. Then $I = \inseq{\a}{i}{O}$
  is called a \emph{Morley sequence} in $p$ over $B$ \emph{based on} $A$
  (with respect to the definition schema $d_p$) if for every
  $i \in O$ we have $\a_i \models p |^d B_i$, where
  $B_i = B\cup\set{\a_j \colon j<i}$ as usual.
\end{dfn}

The following is pretty clear:

\begin{obs}\label{obs:morleyset}
    Let $I$ be a Morley sequence in $p$ over $B$ with respect to the definition schema
    $d_p$, and assume furthermore that $I = \inseq{\a}{i}{O}$ is an indiscernible set. Then
    for every $i \in O$ we have $\a_i \models p|^d B\cup\set{\a_j \colon j\neq i}$.
\end{obs}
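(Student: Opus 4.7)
The plan is to reduce to the known Morley condition by relabelling indices via set-indiscernibility. Fix $i \in O$ and a formula $\ph(\x,\y_0,\y_1,\dots,\y_k)$ together with a tuple of parameters of the form $(\b,\a_{j_1},\dots,\a_{j_k})$, where $\b \in B$ and $j_1,\dots,j_k \in O\setminus\{i\}$ are pairwise distinct. We need to show that
\[
\models\ph(\a_i,\b,\a_{j_1},\dots,\a_{j_k}) \iff \models d_p\x\,\ph(\x,\b,\a_{j_1},\dots,\a_{j_k}).
\]
Pick any indices $n_0<n_1<\dots<n_k$ in $O$; such indices exist because $\{i,j_1,\dots,j_k\}$ already supplies $k+1$ distinct elements of $O$. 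By the Morley condition applied at $n_k$ (Definition \ref{dfn:Morley}), $\a_{n_k} \models p|^{d}B_{n_k}$, and since $\{\a_{n_0},\dots,\a_{n_{k-1}}\}\subseteq B_{n_k}$ we obtain
\[
\models\ph(\a_{n_k},\b,\a_{n_0},\dots,\a_{n_{k-1}}) \iff \models d_p\x\,\ph(\x,\b,\a_{n_0},\dots,\a_{n_{k-1}}). \tag{$\ast$}
\]

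Now I use that $I$ is an indiscernible \emph{set} over $B$ (a Morley sequence is, in particular, an indiscernible sequence over $B$, and we have added the set hypothesis). Set-indiscernibility says that the type over $B$ of any tuple of distinct elements of $I$ depends only on the number of its coordinates. Hence the tuple $(\a_i,\a_{j_1},\dots,\a_{j_k})$, read in this order, has the same type over $B$ as the tuple $(\a_{n_k},\a_{n_0},\dots,\a_{n_{k-1}})$ obtained by placing $\a_{n_k}$ first. Since $\b\in B$, the same equivalence holds if we freely attach $\b$ to the parameters.

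Transfer $(\ast)$ through this type equivalence: the left-hand side of $(\ast)$ becomes $\models\ph(\a_i,\b,\a_{j_1},\dots,\a_{j_k})$. For the right-hand side, unfold the definition schema: $d_p\x\,\ph(\x,\y)$ is a formula with parameters from $A\subseteq B$ (for each choice of $\ph$), so the right-hand side of $(\ast)$ is a formula in the tuple $(\a_{n_0},\dots,\a_{n_{k-1}})$ with parameters in $B$. Applying the $B$-type equivalence again, it becomes $\models d_p\x\,\ph(\x,\b,\a_{j_1},\dots,\a_{j_k})$. Combining these, we obtain the desired biconditional, which is exactly the statement that $\a_i \models p|^d(B\cup\{\a_j \colon j\neq i\})$.

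There is no real obstacle here; the only care needed is to notice that the definition schema $d_p$ has parameters inside $A\subseteq B$, so \emph{set}-indiscernibility over $B$ is enough to transfer both sides of $(\ast)$ simultaneously. The choice of auxiliary indices $n_0<\dots<n_k$ ensures that the invoked Morley condition refers to an index ($n_k$) that lies above the rest in $O$, which is the only case directly handled by Definition \ref{dfn:Morley}.
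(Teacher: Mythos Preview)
Your proof is correct and is precisely the natural argument; the paper itself gives no proof of this observation (it is introduced with ``The following is pretty clear''), so there is nothing substantive to compare. Your use of set-indiscernibility over $B$ to relabel both the element $\a_i$ and the parameter tuple $(\a_{j_1},\dots,\a_{j_k})$ simultaneously, together with the fact that $d_p$ is over $A\subseteq B$, is exactly what makes the observation immediate.
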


%\begin{rmk}
%    The reader might want to suggest alternative more general definitions of stationarity
%    such as having a unique nonforking/nonsplitting extension. We will see later why this
%    choice seems too general, even for stable types.
%\end{rmk}

\section{Finitely satisfiable types}

In this section we develop some basic theory of finitely satisfiable
types. Notions introduced here are essential for understanding
Section 5, but a reader who is not interested in Shelah's approach
to ``stable'' types can easily skip this section in the first
reading and proceed to the next one, where the general theory of
generically stable types is developed. Those readers would like to
see the connection between the two approaches and intend to read
this section, are encouraged to also have a look at the Appendix,
where we try to motivate viewing types as ultrafilters by passing to
the space of measures on the Boolean algebra of definable sets
(Keisler measures).

%The following definitions are motivated by \cite{Sh715}, \cite{Ke}
%and \cite{HPP}. In this paper we work with types and do not
%investigate Keisler measures, but it is convenient to give the
%definitions in this generality. The reader can easily ignore all the
%definitions and claims concerning measures; our hope is, however,
%that they might clarify the connections between ultrafilters used by
%Shelah in \cite{Sh715}, coheirs and Shelah sequences, Definition
%\ref{dfn:shseq} (see Discussion \ref{dsc:types-filters}).

\begin{dfn}
  Let $A, B$ sets.
%  \begin{enumerate}
%    \item
      We denote the boolean algebra of $B$-definable subsets of $A^m$ by $\Def_m(A,B)$.
      Let $\Def(A,B) = \bigcup_{m<\om}\Def_m(A,B)$. We omit $B$ if $B=A$.
%    \item
%      A $(B,m)$-\emph{Keisler measure} on $A$ is a finitely additive probability
%      measure on $\Def_m(A,B)$. We omit $B$ if $B=A$. We omit $m$ if it is clear
%      from the context.
%    \item
%      We will call a $(A,m)$-Keisler measure on $\fC$ a \emph{full} $m$-Keisler measure
%      over $A$.
%%      We denote the space of all complete $m$-Keisler measures on $A$ by
%%      $\frak T_m(A)$.
%    \item
%      A Keisler measure $\mu \in \Def_m(B,C)$ is \emph{supported on} $A \subseteq B$
%      if there exists a measure $\mu_0 \in \Def_m(A,C)$ such that
%      for every formula $\ph(\x,\c)$ over $C$ with $\len(\x) = m$ we have
%      $\mu(\ph^\fC(\x,\b)) = \mu_0(\ph^\fC(\x,\b)\cap A^m)$. We denote $\mu_0$ (if exists)
%      by $\mu \downarrow A$.
%  \end{enumerate}
\end{dfn}

\begin{rmk}
%\begin{enumerate}
%\item
  Note that an ultrafilter \FU on $\Def(\fC,B)$ precisely
  corresponds to
  a complete type over $B$. In order to
  be consistent with Shelah's notions and terminology,
  we call this type the \emph{average type} of \FU and denote
  it by $\Av(\fU,B)$.
  % (see also Definition \ref{dfn:average}).
%\item
%  Note that a type $p \in \tS(B)$ is finitely satisfiable in $A \subseteq B$ iff the
%  appropriate ultrafilter (measure) \FU on $\Def(B,\fC)$ is supported on A. In this case
%  the measure $\fU\downarrow A$ (see the definition of ``supported on $A$'' above) corresponds
%  precisely to $p\rest A$, i.e. $\Av(\fU\downarrow A,A) = p\rest A$.
%\end{enumerate}
\end{rmk}

\begin{dfn}
\label{dfn:average}
%\begin{enumerate}
%\item
%  Let $A \subseteq C$, and let $\mu$ be a $B$-Keisler measure on $A^m$,
%  i.e. $\mu$ a measure on $\Def_m(A,B)$.
%  We define
%  the \emph{coheir measure} of $\mu$ over $C$ (denoted $\mu\uparrow C$)
%  % (the \emph{average measure} of $\mu$ over $B$)
%  on $\Def_m(C,B)$ by
%  $\mu\uparrow C(\ph(\x,\b)^\fC \cap B^m)  = \mu(\ph^\fC(\x,\b)\cap A^m)$.

%  In particular, for $C = \fC$, call $\mu \uparrow \FC$ the \emph{full
%    coheir measure} of $\mu$.
%\item
  Let $A\neq\emptyset$ and $B$ be sets, $\fU$ an ultrafilter on $\Def_m(A,B)$.
%  the boolean algebra
%  of $B$-definable subsets of $A^m$.
  We define
  the \emph{average type} of $\fU$ over $B$ by
  \[
     \Av(\fU,B) = \left\{\ph(\x,\b) \colon \b \in B \; \mbox{and} \;\set{\a \in A^m \colon \ph(\a,\b)} \in \fU \right\}
  \]
%\end{enumerate}
\end{dfn}

\begin{obs}
%\begin{enumerate}
%\item
%  For $A,B,C,\mu$ as in (i) above, $\mu\uparrow C$ is a Keisler measure on $\Def_m(C,B)$ supported on $A$,
%  $(\mu \uparrow C) \downarrow A = \mu$. Moreover, for $A \subseteq A' \subseteq C$,
%  $(\mu \uparrow C) \downarrow A' = \mu \uparrow A'$.
%\item
  For $A,B,\fU$ as above, $\Av(\fU,B) \in \tS_m(B)$ finitely satisfiable in $A$ (so if
  $A\subseteq B$, then $\Av(\fU,B)$ is a coheir
  of its restriction to $A$).
%%  For $A,B,\fU$ as above, both $\fU$ and $\Av(\fU,B)$ can be viewed as $\set{0,1}$-measures on
%%  $\Def_m(A,B)$ and $\Def_m(\fC,B)$ respectively, and under these identifications the definition
%%  of average type is a particular case of the definition of average measure.
%%\item
%%  For $A,B,\mu$ as above, $\Av(\mu,B) \in \fT_m(B)$ and is indeed a coheir of its restriction to $A$.
%\end{enumerate}
\end{obs}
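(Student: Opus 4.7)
The plan is to verify two things: first, that $\Av(\fU,B)$ is a complete consistent type in the variables $\x$ (with $\len(\x) = m$) over $B$; second, that it is finitely satisfiable in $A$. Both will fall out directly from the ultrafilter axioms, so the ``proof'' is essentially bookkeeping. Let me describe how I would organize it.

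For completeness, I would fix any formula $\ph(\x,\y)$ with $\len(\x) = m$ and any tuple $\b \in B$. The set $D_\ph = \set{\a \in A^m \colon \models \ph(\a,\b)}$ is $B$-definable, so it lies in $\Def_m(A,B)$. Since $\fU$ is an ultrafilter on $\Def_m(A,B)$, exactly one of $D_\ph$ and $A^m \setminus D_\ph$ belongs to $\fU$, which by Definition \ref{dfn:average} means that exactly one of $\ph(\x,\b)$ and $\neg\ph(\x,\b)$ lies in $\Av(\fU,B)$. For consistency, I would take any finite collection $\ph_1(\x,\b_1), \ldots, \ph_n(\x,\b_n) \in \Av(\fU,B)$. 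Each of the corresponding definable subsets $D_{\ph_i} \subseteq A^m$ is in $\fU$, hence so is their finite intersection; since $\emptyset \notin \fU$, this intersection contains some $\a \in A^m$. Then $\a$ witnesses the simultaneous satisfiability of the $\ph_i(\x,\b_i)$'s.

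The finite satisfiability assertion is really the content of the last sentence of the previous paragraph: every formula in $\Av(\fU,B)$ is realized by some element of $A^m$, and, in fact, every finite conjunction is. The parenthetical remark about coheirs is then immediate: if $A \subseteq B$, then the restriction $\Av(\fU,B)\rest A$ lies in $\tS_m(A)$, and by the definition of coheir given earlier in the preliminaries section, any extension to $\tS_m(B)$ finitely satisfiable in $A$ is a coheir of this restriction.

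I do not expect any real obstacle here; the only mildly subtle point is remembering to note that $\emptyset \notin \fU$ (which is part of the definition of an ultrafilter) in order to get satisfiability rather than mere consistency of the finite conjunction. The hypothesis $A \neq \emptyset$ in Definition \ref{dfn:average} is what makes $\Def_m(A,B)$ a nontrivial boolean algebra in the first place, so that ultrafilters on it exist and the argument is not vacuous.
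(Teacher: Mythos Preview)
Your proposal is correct and follows exactly the same approach as the paper's proof, which is simply a terser version: the paper notes that for any $\ph(\x,\b)$ either the set of realizations or its complement lies in $\fU$ (giving completeness), and then declares finite satisfiability in $A$ ``(hence consistency)'' to be clear. You have merely unpacked what ``clear'' means, including the role of $\emptyset \notin \fU$.
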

\begin{prf}
  For $\ph(\x,\b)$, a formula with $m$ free
  variables over $B$, either \set{\a \in A^M \colon \neg\ph(\a,\b)} or
  \set{\a \in A^M \colon \ph(\a,\b)} is in $\fU$, so the average type is complete.
  Finite satisfiability in $A$ (hence consistency) is also
  clear.
\end{prf}

\begin{rmk}
    Note that in the definition of $p = \Av(\fU,B)$ above, the set $A$
    in which $p$ is finitely satisfiable is given by $\fU$. We can also forget $A$
    sometimes, since as a
    complete type over $B$, $p$ does not depend on $A$ in the
    following sense:
%    \begin{itemize}
%    \item
    if $A \subseteq A'$, $\fU, \fU'$ ultrafilters on
    $\Def(A,B)$ and
    $\Def(A',B)$ respectively, such that $\fU = \fU'\rest\Def(A,B)$
    in the obvious sense,
    then $\Av(\fU,B) = \Av(\fU',B)$.
%    \end{itemize}
\end{rmk}

%\begin{rmk}
%%\begin{enumerate}
%%\item
%  Note that in clause (ii) of Definition \ref{dfn:average} we obtain a complete type over $B$
%  (by the previous observation), therefore a full measure over $B$, i.e. an ultrafilter $\frak V$
%  on $\Def(\fC,B)$.
%  Clearly, this ultrafilter corresponds to the full coheir measure of $\FU$ over $B$, that is,
%  $\frak V = \fU\uparrow B$. So the second definition is a particular case of the first one.
%%\end{enumerate}
%\end{rmk}

\begin{obs}\label{obs:fs}
%\begin{enumerate}
%\item
  Let $A,B$ be sets, $p \in \tS_m(B)$. Then $p$ is finitely satisfiable in $A$
  if and only if for some ultrafilter $\fU$ on $\Def_m(A,B)$ $p = \Av(\fU,B)$.
%\item
%  Let $A \subseteq B$ be sets, $\mu \in \fT_m(B)$. Then $\mu$ is a coheir of $\mu\rest A$
%  if and only if for some $(B,m)$-Keisler measure $\mu_0$ on $A$, $\mu = \Av(\mu_0,B)$.
%\end{enumerate}
\end{obs}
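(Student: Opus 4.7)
The plan is to treat the two directions separately; neither is deep, and the observation is essentially a book-keeping exercise translating between complete types and ultrafilters on the Boolean algebra $\Def_m(A,B)$. The ``if'' direction is immediate from the preceding observation, which already records that $\Av(\fU,B) \in \tS_m(B)$ is finitely satisfiable in $A$. So the only work is the ``only if'' direction.

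For the ``only if'' direction, suppose $p \in \tS_m(B)$ is finitely satisfiable in $A$. For each formula $\ph(\x,\b) \in p$ (with $\b \in B$), set
\[
X_\ph = \set{\a \in A^m \colon \models \ph(\a,\b)} \in \Def_m(A,B).
\]
Finite satisfiability of $p$ in $A$ says precisely that $X_\ph \neq \emptyset$ for every $\ph(\x,\b) \in p$, and since $p$ is closed under finite conjunctions the family $\fF = \set{X_\ph \colon \ph(\x,\b) \in p}$ has the finite intersection property inside $\Def_m(A,B)$. Hence by the Boolean ultrafilter theorem $\fF$ extends to an ultrafilter $\fU$ on $\Def_m(A,B)$.

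By construction $X_\ph \in \fU$ for every $\ph(\x,\b) \in p$, so $\ph(\x,\b) \in \Av(\fU,B)$ by Definition \ref{dfn:average}; thus $p \subseteq \Av(\fU,B)$. Since both $p$ and $\Av(\fU,B)$ are complete types in $\tS_m(B)$ (the latter by the preceding observation), the inclusion is an equality. There is no real obstacle here: the only subtle point is remembering that we need the ultrafilter to live on $\Def_m(A,B)$ rather than on all definable sets over $B$, which is exactly what the finite satisfiability hypothesis $X_\ph \neq \emptyset$ allows us to arrange.
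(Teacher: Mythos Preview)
Your proof is correct and follows essentially the same route as the paper. The only minor difference is that the paper observes the collection $\set{\ph^\fC(\x,\b)\cap A^m \colon \ph(\x,\b) \in p}$ is already an ultrafilter on $\Def_m(A,B)$ (completeness of $p$ gives maximality directly), so the appeal to the ultrafilter theorem is unnecessary, though harmless.
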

\begin{prf}
%\begin{enumerate}
%\item
  If $p = \Av(\fU,B)$ for some ultrafilter $\fU$ on $A^m$, then clearly $p$ is finitely
  satisfiable in $A$. On the other hand, if $p$ is finitely satisfiable in $A$, it is
  easy to see that the collection $\set{\ph^\fC(\x,\b)\cap A^m \colon \ph(\x,\b) \in p}$
  is an ultrafilter on $\Def_m(A,B)$.
%\item Clear.
%\end{enumerate}
\end{prf}

\begin{dfn}
\label{dfn:shseq}
\begin{enumerate}
\item
  Let $A \subseteq B$, $O$ an order type.  We say that a sequence  $I = \inseq{\a}{i}{O}$ is a \emph{Shelah sequence}
  over $B$ \emph{supported on} $A$ if (denoting  $B_i = B\cup\set{\a_j\colon j<i}$)
  \begin{itemize}
  \item
    $\tp(\a_i/B_i)$ is finitely
    satisfiable in $A$
%  \item
%    for $k\le j<i$, $\tp(\a_j/B_k) = \tp(\a_i/B_k)$
 \item
   $I$ is an indiscernible sequence over $B$
  \end{itemize}
\item
  Let $A \subseteq B$, $p \in \tS(B)$ finitely satisfiable in $A$.
  We call a sequence $I$ a \emph{Shelah sequence}
  in $p$ supported on $A$ if it is a Shelah sequence over $B$ supported on $A$
  of realizations of $p$.
\end{enumerate}
\end{dfn}

\begin{lem}
\begin{enumerate}
\item
  If $p \in \tS^m(B)$ finitely satisfiable in $A$, then there is an infinite Shelah sequence
  in $p$ over $B$ supported on $A$. Moreover, for every ultrafilter $\fU$ on
  $\Def_m(A,\fC)$ satisfying
  $\Av(\fU,B)=p$ (see Observation \ref{obs:fs}),
  the sequence defined by $\a_i \models \Av(\fU,B\cup\lseq{\a}{j}{i})$ is such a sequence.
\item
  If $p \in \tS^m(B)$ finitely satisfiable in $A$, then there is a Shelah sequence
  in $p$ over $B$ supported in $A$ of any order type.
\item
   \lseq{\a}{i}{\lam} is a Shelah sequence over $B$ based on $A$ if and only
   if for some ultrafilter $\fU$ on $\Def(A,\fC)$, $\a_i \models \Av(\fU,B\lseq{\a}{j}{i})$.
\end{enumerate}
\end{lem}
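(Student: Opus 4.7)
My plan for part (1) is to verify directly that the construction suggested by the statement succeeds. Fix an ultrafilter $\fU$ on $\Def_m(A,\fC)$ with $\Av(\fU,B) = p$, and define $\inseq{\a}{i}{\om}$ inductively by $\a_i \models \Av(\fU,B\cup\lseq{\a}{j}{i})$. Each $\tp(\a_i/B\a_{<i})$ then equals $\Av(\fU,B\a_{<i})$, hence is finitely satisfiable in $A$ as in Observation \ref{obs:fs}. For $B$-indiscernibility, I would use the iterated tensor product $\fU^{\otimes n}$, an ultrafilter on $\Def_{nm}(A,\fC)$ defined recursively by
\[
    X \in \fU^{\otimes (n+1)} \iff \set{\bar y \in A^{nm} : \set{\a \in A^m : (\bar y,\a) \in X} \in \fU} \in \fU^{\otimes n}.
\]
A straightforward induction on $n$ then shows $\tp(\a_{i_1},\ldots,\a_{i_n}/B) = \Av(\fU^{\otimes n},B)$ for every $i_1 < \ldots < i_n < \om$, which depends only on $n$.

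For part (2) the plan is to bootstrap from part (1) by compactness. Given an infinite Shelah sequence $I = \lseq{\a}{i}{\om}$ from (1) and any linear order $O$, the standard EM-type argument produces a sequence $J = \inseq{\b}{i}{O}$ which is indiscernible over $B$ and shares the EM-type of $I$ over $B$. Only finite satisfiability over $A$ at each stage requires checking. Given $\ph(\x,\bar b,\b_{j_1},\ldots,\b_{j_k}) \in \tp(\b_i/B\set{\b_j : j<_O i})$ with $\bar b \in B$ and $j_1 <_O \ldots <_O j_k <_O i$, the EM-type correspondence places $\ph(\x,\bar b,\a_{l_1},\ldots,\a_{l_k})$ in $\tp(\a_{l_{k+1}}/B\a_{<l_{k+1}})$ for any $l_1 < \ldots < l_{k+1}$ in $\om$, yielding a witness $\a \in A$ with $\models \ph(\a,\bar b,\a_{l_1},\ldots,\a_{l_k})$. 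Since $\a \in A \subseteq B$, the formula $\ph(\a,\bar b,\y_1,\ldots,\y_k)$ has parameters in $B$, and equality of EM-types transfers the witness to $J$: $\models \ph(\a,\bar b,\b_{j_1},\ldots,\b_{j_k})$.

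For part (3), the ``if'' direction is a restatement of part (1). For ``only if'', given a Shelah sequence $\lseq{\a}{i}{\lam}$, the plan is to build the desired ultrafilter directly. Consider the collection $\mathcal{F}$ of subsets $\set{\a \in A^m : \ph(\a,\bar c)}$ of $A^m$ for which $\models \ph(\a_i,\bar c)$ holds for all sufficiently large $i$, where $\bar c$ ranges over finite tuples from $B \cup \set{\a_j : j<\lam}$; by $B$-indiscernibility this is equivalent to holding for one $i$ larger than the indices appearing in $\bar c$, so the criterion is unambiguous. Closure of $\mathcal{F}$ under finite intersections follows from indiscernibility, and nonemptiness of each member follows from finite satisfiability of $\tp(\a_i/B\a_{<i})$ in $A$. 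Zorn's Lemma extends $\mathcal{F}$ to an ultrafilter $\fU$ on $\Def(A,\fC)$, and $\a_i \models \Av(\fU,B\a_{<i})$ is then immediate: the membership criterion for $\mathcal{F}$ forces every positive formula of $\tp(\a_i/B\a_{<i})$ into $\fU$, and being an ultrafilter $\fU$ cannot also contain the negations.

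The main delicate point is the tensor-product bookkeeping in part (1); it is standard in the theory of coheir sequences but must be set up carefully. In parts (2) and (3) the hypothesis $A \subseteq B$ is essential: it lets $A$-witnesses be viewed as $B$-parameters and transferred via $B$-indiscernibility in (2), and it underlies the coherence of the filter $\mathcal{F}$ in (3).
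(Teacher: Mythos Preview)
Your arguments for parts (2) and (3) are correct and are essentially the paper's arguments with the details filled in. For (2) the paper simply says ``by the previous clause and compactness''; your verification that finite satisfiability survives the EM-type construction (using $A\subseteq B$ to transfer the witness back) is exactly what is needed. For (3) your filter $\mathcal{F}$ is the same object as the paper's $\bigcup_i \fU_i$, described slightly differently: the paper phrases it as the union of the types $\tp(\a_i/B\a_{<i})$ being a partial type finitely satisfiable in $A$, then extending.

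Part (1), however, has a genuine gap. Your recursive definition
\[
    X \in \fU^{\otimes (n+1)} \iff \set{\bar y \in A^{nm} : \set{\a \in A^m : (\bar y,\a) \in X} \in \fU} \in \fU^{\otimes n}
\]
does not produce a well-defined ultrafilter on $\Def_{nm}(A,\fC)$: for $X$ definable the inner set $\set{\a:(\bar y,\a)\in X}$ is definable, but the outer set $\set{\bar y: \set{\a:(\bar y,\a)\in X}\in\fU}$ need not lie in $\Def_{nm}(A,\fC)$, so $\fU^{\otimes n}$ cannot be applied to it. Passing to an extension of $\fU$ on the full power set $\mathcal{P}(A^m)$ does not rescue the induction either: at the inductive step you need to know that the condition ``$\set{\a:\ph(\bar y,\a,\bar b)}\in\fU$'' on $\bar y$ is detected by $\tp(\a_{i_1},\ldots,\a_{i_n}/B)$, but this condition is not expressed by any formula over $B$, so the inductive hypothesis (which only controls $B$-formulas) does not apply.

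The paper sidesteps this entirely. It observes two elementary facts about the constructed sequence: each $\tp(\a_i/B_i)$ is finitely satisfiable in $A$ (hence does not split over $A$, Observation~\ref{obs:nonsplit}), and $\tp(\a_i/B_k)=\tp(\a_j/B_k)$ for $k\le j<i$ (immediate, since both equal $\Av(\fU,B_k)$). Fact~\ref{fct:splitind} then gives indiscernibility directly. If you want to keep a ``product'' flavour, the correct object is the global $A$-invariant type $q=\Av(\fU,\fC)$ and its tensor powers $q^{\otimes n}$ in the sense of products of invariant types; there the induction does go through, but precisely because $A$-invariance (nonsplitting) is built in --- which is really the paper's argument in disguise.
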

\begin{prf}
\begin{enumerate}
\item
    Denote $B_i = B\cup\lseq{\a}{j}{i}$.

  It is clear that any sequence obtained in this way (say, of length \lam) satisfies:
  \begin{itemize}
  \item
    $\tp(\a_i/B_i)$ is finitely
    satisfiable in $A$
  \item
   for $k\le j<i$, $\tp(\a_j/B_k) = \tp(\a_i/B_k)$
  \end{itemize}
  Clearly by Observation \ref{obs:nonsplit} $\tp(\a_i/B_i)$ does not split over $A$. Now by
  Fact \ref{fct:splitind}
  \lseq{\a}{i}{\lam} is an
  indiscernible sequence.
\item
  By the previous clause and compactness.
\item
  Let \lseq{\a}{i}{\lam} be a Shelah sequence. Denote
  $B_i = B\cup\lseq{\a}{j}{i}$.
  By Observation \ref{obs:fs}, for every $i$ there exists
  $\fU_i$ an ultrafilter on $\Def(A,B_i)$
  such that $\tp(\a_i/B_i) = \Av(\fU_i,B_i)$.
  Note that for $i<j$, $\fU_i \rest \Def(A,B_j) =
  \fU_j$. Let

  $$\fU_\lam = \bigcup_{i<\lam}\fU_i$$

  Then $\fU_\lam$ is a pre-filter on $\Def(A,B_\lam)$, in particular
  on $\Def(A,\fC)$. Extending it to an ultrafilter, we are done.

  In other words: the union of types $\tp(\a_i/B_i)$ is a partial type
  over $B_\lam$ finitely satisfiable in $A$, so it can be extended to
  a type $q$ over $B_\lam$ finitely satisfiable in $A$. Now let $\fU$ be such
  that $q = \Av(\fU,B_\lam)$.
\end{enumerate}
\end{prf}

We would like the reader to compare the definition of a Shelah
sequence to Definition \ref{dfn:Morley}. We will see later that
generally Shelah sequences and Morley sequences are not the same
object, even if both exist.
% the situation is nicer when working with
%indiscernible sets:

\begin{rmk}
\begin{enumerate}
  \item A Shelah sequence in $p \in \tS(B)$ supported in $A$ is a
  nonsplitting sequence in $p$ based on $A$.
  \item A Morley sequence in $p \in \tS(B)$ based on $A$ is a
  nonsplitting sequence in $p$ based on $A$.
\end{enumerate}
\end{rmk}

\begin{dfn}
\label{dfn:unique}
  Let $p\in \tS(B)$ finitely satisfiable in $A \subseteq B$. We say that $p$ has \emph{uniqueness}
  over $A$ if there is a unique type of a Shelah sequence in $p$ based on $A$.
\end{dfn}

\begin{rmk}
    Stationary types have uniqueness.
\end{rmk}

%\begin{obs}
%  Let $p \in \tS(B)$ finitely satisfiable in $A \subseteq B$. Then $p$ has uniqueness over $A$
%  if and only if for every $C$ containing $B$, $p$ has a unique extension over $C$ which
%  finitely satisfiable in $A$ ($p\rest A$ has a unique coheir over $C$).
%\end{obs}
%\begin{prf}
%  The right-to-left direction is clear. If $p_1,p_2 \in \tS(\fC)$ both coheirs of
%  $p\rest A$, we have $\ph(\x,\c) \in p_1$, $\neg\ph(\x,\c) \in p_2$ for some
%  $\c \in \fC$, which means that for some utrafilters $\fU_1$ and $\fU_2$ on
%  $\Def(\fC,
%\end{prf}

\begin{obs}\label{obs:unique}
  Let $p \in \tS(M)$ finitely satisfiable in $A \subseteq M$, $M$ is $|A|^+$-saturated.
  Then $p$ has uniqueness over $A$.
\end{obs}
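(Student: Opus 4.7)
My plan is to show that any two Shelah sequences $I=\inseq{\a}{i}{O}$ and $J=\inseq{\b}{i}{O}$ in $p$ supported on $A$ satisfy $\tp(I/M)=\tp(J/M)$. Because both are indiscernible over $M$, this amounts to proving, by induction on $n$, that $\tp(\a_0\ldots\a_{n-1}/M)=\tp(\b_0\ldots\b_{n-1}/M)$ for each $n<\om$. The case $n=1$ is immediate since $\a_0,\b_0\models p$.

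For the inductive step, pick an automorphism of $\fC$ fixing $M$ which sends $\b_{<n}$ to $\a_{<n}$, and replace $J$ by its image (this preserves the property of being a Shelah sequence in $p$ supported on $A$). It then suffices to check that $\tp(\a_n/M\a_{<n})=\tp(\b_n/M\a_{<n})$. Both of these types extend the common restriction $p\in\tS(M)$, and by the definition of a Shelah sequence, both are finitely satisfiable in $A$. In particular, by Observation \ref{obs:nonsplit}(1) both are nonsplitting over $A$. The type $p$ itself, being finitely satisfiable in $A$, is also nonsplitting over $A$.

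At this point Lemma \ref{lem:unique} applies: $M$ contains $A$ and is $(|A|+\aleph_0)^+$-saturated, so $p$ has a \emph{unique} extension over $A$ to $M\a_{<n}$ that does not split over $A$. Hence the two candidates $\tp(\a_n/M\a_{<n})$ and $\tp(\b_n/M\a_{<n})$ must coincide, completing the induction.

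The argument is essentially bookkeeping around Lemma \ref{lem:unique}; the only mild subtlety is the saturation requirement. When $|A|\ge\aleph_0$ the hypothesis $|A|^+$-saturated matches $(|A|+\aleph_0)^+$-saturated. For finite $A$ one harmlessly replaces $A$ by a countable superset inside $M$ (finite satisfiability in $A$ implies finite satisfiability in the enlargement), so the saturation condition required by Lemma \ref{lem:unique} is met. No obstacle beyond this appears, since all the work—the existence of nonsplitting extensions, the automorphism reduction, and the uniqueness statement—has already been prepared in the preceding material.
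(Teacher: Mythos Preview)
Your proof is correct and is exactly the paper's one-line proof (``By Lemma \ref{lem:unique}'') written out in full; the induction with the automorphism alignment is the natural unpacking of that citation. One small comment: your fix for finite $A$ does not actually work---enlarging $A$ to a countable set would require $M$ to be $\aleph_1$-saturated, which the stated hypothesis does not provide---but the paper has precisely the same discrepancy between the observation and the lemma, so in both cases one should read $|A|^+$ as $(|A|+\aleph_0)^+$.
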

\begin{prf}
  By Lemma \ref{lem:unique}.
\end{prf}

In \cite{Sh715} Shelah shows the following:

\begin{fct}\label{fct:defunique}
  Let $p\in \tS(A)$ finitely satisfiable in $A$ and definable over $A$. Then $p$ has
  uniqueness.
\end{fct}

Note that $A$ is just a set, so there is no reason why there would
be only one nonforking (or nonsplitting) extension of $p$ to an
arbitrary superset; in fact, this is generally false, see Example
\ref{exm:unstable} below. Moreover, it is generally false that a
Shelah sequence and a Morley sequence in $p$ obtained from extending
$p$ by its definition over $A$ have the same type. Still, there is a
unique Shelah sequence.

\section{Generically stable types}

In this section we propose an approach to generically stable types different
from \cite{Sh715} which does not require working with finitely
satisfiable types. The definition below is more general and might
seem weaker than the on given by Shelah, but as it turns out, they
give rise to the same notion. See section 5 for more details.

%\begin{asm}
%Every theory mentioned in this section is \emph{assumed to be
%dependent}.
%\end{asm}

\begin{dfn}
    We call a type $p\in \tS(A)$ \emph{generically stable} if there exists a
    nonforking sequence \lseq{\b}{i}{\om} in $p$ (over $A$) which is an
    indiscernible set.
\end{dfn}

%\begin{rmk}
%Note that by Observation \ref{obs:setsplit}, \emph{any}
%indiscernible set in $p$ is necessarily a nonsplitting sequence in
%$p$. We shall see later that replacing ``nonforking'' with
%``nonsplitting'' in the Definition above does not give rise to an
%equivalent notion.
%\end{rmk}

\begin{lem}\label{lem:Idef}
    Let $I = \lseq{\b}{i}{\om}$ be an indiscernible set over a set $A$, $C\supseteq A$. Then $p =
    \Av(I,C)$ is definable over $\cup I$.
\end{lem}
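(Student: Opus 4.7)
The plan is to exploit Observation \ref{obs:stabform}: since $I$ is an indiscernible set, every formula $\ph(\x,\y)$ is stable for $I$ with a uniform bound $k = k_\ph < \om$ such that for every $\c \in \fC$, either $|\set{i : \ph(\b_i,\c)}| < k$ or $|\set{i : \neg\ph(\b_i,\c)}| < k$. This dichotomy is exactly what I need to produce a definition schema using parameters from $\cup I$.

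First I will fix $\ph(\x,\y)$ and select any $2k_\ph$ members of $I$, say $\b_0,\dots,\b_{2k_\ph-1}$. I claim that for any $\c \in C$,
\[
   \ph(\x,\c) \in p = \Av(I,C) \quad\iff\quad \bigl|\set{i < 2k_\ph : \ph(\b_i,\c)}\bigr| \ge k_\ph.
\]
For $(\Rightarrow)$: if $\ph(\x,\c)\in p$ then by definition $\set{i \in \om : \neg\ph(\b_i,\c)}$ is finite, and by Observation \ref{obs:stabform} it has size $<k_\ph$; in particular fewer than $k_\ph$ among $\b_0,\dots,\b_{2k_\ph-1}$ satisfy $\neg\ph(\b_i,\c)$, so at least $k_\ph + 1$ satisfy $\ph$. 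For $(\Leftarrow)$: if at least $k_\ph$ of $\b_0,\dots,\b_{2k_\ph-1}$ satisfy $\ph(\b_i,\c)$, then globally $\set{i<\om : \ph(\b_i,\c)}$ has size $\ge k_\ph$, so by the dichotomy from Observation \ref{obs:stabform} it is $\set{i<\om : \neg\ph(\b_i,\c)}$ which has size $<k_\ph$; hence this latter set is finite and $\ph(\x,\c)\in p$.

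This gives the definition outright: take
\[
   d_p\x\ph(\x,\y) \;:=\; \bigvee_{\substack{S \subseteq \set{0,\dots,2k_\ph-1}\\|S| = k_\ph}} \bigwedge_{i \in S} \ph(\b_i,\y),
\]
a formula in $\y$ whose parameters lie in $\cup I$. Running over all $\ph$ gives a definition schema for $p$ over $\cup I$, which is what is required. I do not expect a real obstacle here since the hard work is encapsulated in Observation \ref{obs:stabform}; the only point requiring mild care is picking enough elements of $I$ (namely $2k_\ph$) so that both directions of the equivalence can be read off a bounded finite subset, rather than from $I$ as a whole.
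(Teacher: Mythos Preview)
Your proof is correct and is essentially identical to the paper's own argument: both invoke Observation \ref{obs:stabform} to obtain the bound $k_\ph$, then define $d_p\x\ph(\x,\y)$ as the ``majority vote'' formula $\bigvee_{|S|=k_\ph}\bigwedge_{i\in S}\ph(\b_i,\y)$ over the first $2k_\ph$ elements of $I$. You have simply spelled out the biconditional a bit more explicitly than the paper, which dispatches it with ``clearly''.
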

\begin{prf}
%    Denote $C = B\cup

    Let $\ph(\x,\y)$ be a formula and let $k = k_\ph$ be as in
    Observation \ref{obs:stabform}. Now clearly for every $\c \in C$

    $$ \ph(\x,\c) \in \Av(I,C)$$
    if and only if
    $$|\set{i<2k \colon \models \ph(\b_i,\c)}| \ge k$$
    if and only if
    $$\bigvee_{u\subset
    2k,|u|=k}\bigwedge_{i\in u}\ph(\b_i,\c) $$

    So $p$ is definable over $I$ by the schema
    $$d_p\x\ph(\x,\y) = \bigvee_{u\subset
    2k_\ph,|u|=k_\ph}\bigwedge_{i\in u}\ph(\b_i,\y) $$
    as required.
\end{prf}

\begin{lem}\label{lem:almostover}
    Let $p\in \tS(A)$ be generically stable. Then $p$ is properly definable (definable by a good definition) almost over
    $A$.
\end{lem}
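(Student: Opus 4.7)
My plan is to extract the definition from the global average $q := \Av(I,\fC)$, where $I$ is a witnessing nonforking indiscernible set, and then show that the canonical parameters of the resulting definition formulas can be taken in $\acl(A)$.

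I would first fix a nonforking sequence $I = \lseq{\b}{i}{\om}$ in $p$ over $A$ which is an indiscernible set, and set $q = \Av(I,\fC)$. By Lemma~\ref{lem:globalnonfork}, $q$ is a global extension of $p$ that does not fork over $A$, and by Lemma~\ref{lem:Idef} it is defined by the explicit schema
\[
    d_p\x\ph(\x,\y) \;=\; \bigvee_{u \subset 2k_\ph,\,|u|=k_\ph}\bigwedge_{i\in u}\ph(\b_i,\y),
\]
whose parameters lie in $I$, not a priori in $\acl(A)$.

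The central step is to show that for every formula $\ph(\x,\y)$, the canonical parameter $e_\ph \in \fC^{eq} = \fC$ of $d_p\x\ph(\x,\y)$ (as a formula in the free variable $\y$) lies in $\acl(A)$. For any $\sigma \in \Aut(\fC/A)$, $\sigma(q)$ is again a global extension of $p$ that does not fork over $A$, and by Corollary~\ref{cor:fewnonfork} there are only boundedly many such global types, so the orbit of $q$ under $\Aut(\fC/A)$ is bounded. Observing that $\sigma(d_p\x\ph)$ is a definition formula for $\sigma(q)$ restricted to $\ph$-instances, any two automorphisms sending $q$ to the same global type must agree on $e_\ph$; hence the assignment $\sigma(q) \mapsto \sigma(e_\ph)$ is well defined, so the orbit of $e_\ph$ is a quotient of the bounded orbit of $q$. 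For an imaginary, a bounded orbit over $A$ is automatically finite by saturation of $\fC$, so $e_\ph \in \acl^{eq}(A) = \acl(A)$ (using $T = T^{eq}$).

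Rewriting each $d_p\x\ph$ using $e_\ph$ as parameter yields a definition schema $d_p'$ of $p$ over $\acl(A)$, proving definability almost over $A$. Goodness is automatic: for every set $C$ the type $p|^{d'}C$ coincides with $q\rest C$, which is complete and consistent as $q$ is a global type. The main technical point to pin down is the bounded-orbit step — correctly matching the $\Aut(\fC/A)$-action on codes of $d_p\x\ph$ with the $\Aut(\fC/A)$-action on global nonforking extensions of $p$, for which Corollary~\ref{cor:fewnonfork} is the essential input.
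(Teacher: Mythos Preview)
Your proposal is correct and follows essentially the same approach as the paper: both fix a nonforking indiscernible set $I$, use the explicit majority-vote definition schema from Lemma~\ref{lem:Idef}, invoke Lemma~\ref{lem:globalnonfork} to see that $\Av(I,\fC)$ (and its $\Aut(\fC/A)$-conjugates) does not fork over $A$, and then appeal to Corollary~\ref{cor:fewnonfork} to bound the orbit of the definition. The only cosmetic difference is that the paper argues by contradiction (unboundedly many inequivalent conjugates of $\vartheta$ would yield unboundedly many global nonforking extensions), whereas you phrase the same idea positively via canonical parameters and the surjection $\sigma(q)\mapsto\sigma(e_\ph)$.
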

\begin{prf}
    Let $I = \lseq{\b}{i}{\om}$ be a nonforking indiscernible (over $A$) set in
    $p$.

    Let $\ph(\x,\y)$ be a formula, then $p$ is definable over $I$ as in Lemma
    \ref{lem:Idef} by
    $$\vartheta(\y,\b_{<2k}) = d_p\x\ph(\x,\y) = \bigvee_{u\subset
    2k_\ph,|u|=k_\ph}\bigwedge_{i\in u}\ph(\b_i,\y) $$.

    \begin{clm}
        $\vartheta(\x,\b_{<2k})$ as above is almost over $A$.
    \end{clm}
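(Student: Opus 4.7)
My plan is to identify the defining formula $\vartheta(\y,\b_{<2k})$ semantically with ``$\ph(\x,\y) \in \Av(I,\fC)$'' and then bound its $A$-conjugates by counting global nonforking extensions of $p$. Concretely, combining Lemma~\ref{lem:Idef} with the uniform bound $k = k_\ph$ from Observation~\ref{obs:stabform}, I would verify that for every $\c \in \fC$:
$\models \vartheta(\c,\b_{<2k})$ iff at least $k$ of $\b_0,\ldots,\b_{2k-1}$ satisfy $\ph(\cdot,\c)$, iff $|\set{i<\om : \neg\ph(\b_i,\c)}| < k$, iff $\ph(\x,\c) \in \Av(I,\fC)$. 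Thus $\vartheta(\y,\b_{<2k})$ defines exactly the $\ph$-section of $\Av(I,\fC)$, and by Lemma~\ref{lem:globalnonfork}(2) the latter is a global nonforking extension of $p$.

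Next, for any $\sigma \in \Aut(\fC/A)$ the image $\sigma(I)$ is again a nonforking indiscernible set in $\sigma(p) = p$ over $A$, so the same identification shows that the $A$-conjugate formula $\sigma(\vartheta(\y,\b_{<2k})) = \vartheta(\y,\sigma(\b)_{<2k})$ defines $\set{\c : \ph(\x,\c) \in \Av(\sigma(I),\fC)}$. Hence two $A$-conjugates of $\vartheta$ are logically equivalent iff the corresponding $A$-conjugate global nonforking extensions of $p$ agree on every $\ph$-instance. By Corollary~\ref{cor:fewnonfork} there are boundedly many global nonforking extensions of $p$, and therefore boundedly many $A$-conjugates of $\vartheta$ up to logical equivalence.

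The main obstacle is upgrading ``boundedly many'' to ``finitely many'' conjugates, which is the standard $\acl^{eq}$ meaning of ``almost over $A$''. I would tackle this by the usual NIP-style contradiction: if infinitely many pairwise inequivalent conjugates existed, compactness plus Ramsey would produce an $A$-indiscernible sequence $(\bar d_n)_{n<\om}$ of $2k$-tuples with $\bar d_n \equiv_A \b_{<2k}$ and pairwise inequivalent $\vartheta(\y,\bar d_n)$. A further Ramsey step applied to witnesses of these inequivalences, combined with compactness, would then produce a single $\c$ and a subsequence along which the truth value of $\vartheta(\c,\bar d_n)$ alternates infinitely often, directly contradicting Fact~\ref{fct:depend} applied to $\vartheta(\y,\bar z)$ on $(\bar d_n)$.
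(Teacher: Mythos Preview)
Your first two paragraphs are exactly the paper's argument: identify $\vartheta$ with the $\ph$-definition of $\Av(I,\fC)$, observe that $A$-conjugates of $\vartheta$ correspond to $\ph$-definitions of global nonforking extensions of $p$, and bound these by Corollary~\ref{cor:fewnonfork}. The paper writes this more tersely but the content is the same.

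Where you diverge is your third paragraph. You treat the passage from ``boundedly many $A$-conjugates'' to ``finitely many $A$-conjugates'' as the main obstacle. It is not: this holds in any theory by pure compactness, and the paper takes it for granted (``otherwise we would have unboundedly many pairwise nonequivalent automorphic copies''). The relation $E(\bar z,\bar z')$ given by $\forall \y\,(\vartheta(\y,\bar z)\leftrightarrow\vartheta(\y,\bar z'))$ is a $\emptyset$-definable equivalence relation, so if $\tp(\b_{<2k}/A)$ met infinitely many $E$-classes, the partial type over $A$ saying ``$(\bar z_i)_{i<\kappa}$ all realize $\tp(\b_{<2k}/A)$ and are pairwise $E$-inequivalent'' would be finitely satisfiable, hence consistent, for every $\kappa$ --- contradicting the bound you already obtained. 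Equivalently, the imaginary $\b_{<2k}/E$ has bounded $A$-orbit and therefore lies in $\acl^{eq}(A)$.

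Your proposed NIP/Ramsey argument for this step also has a genuine gap. From pairwise inequivalence of the $\vartheta(\y,\bar d_n)$ along an indiscernible sequence you cannot in general extract a single $\c$ with infinitely many truth-value alternations: in $(\setQ,<)$ take $\vartheta(y,z)$ to be $y<z$ and $\bar d_n = n$; the sets $\vartheta(\fC,\bar d_n)$ are pairwise inequivalent, yet for every $c$ the truth value of $c<n$ changes at most once along the sequence. So the ``further Ramsey step'' you sketch does not go through, and dependence is not contradicted by that route.
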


    Note that once we have proven the Claim we are done: $p$ is
    definable almost over $A$ by a definition which is clearly good
    (it defines $\Av(I,\fC)$).

    For the proof of the Claim note that otherwise we would have
    unboundedly many pairwise nonequivalent automorphic copies of $\vartheta$ over
    $A$. In other words, we would have an unbounded sequence of
    automorphisms $\seq{\sigma_\al}$ over $A$ such that
    $\set{\vartheta_\al = \sigma_\al(\vartheta)}$ are pairwise
    nonequivalent. Let $I_\al = \sigma_\al(I)$, $p_\al =
    \Av(I_\al,I_\al\cup A)$. By Lemma \ref{lem:globalnonfork}
    $q_\al = \Av(I_\al,\fC)$ all do not fork over $A$. Note that
    $q_\al$ is definable by $\vartheta_\al$ and therefore are all distinct.
%    By Fact \ref{fct:nonforkexist} there exist
%    global types $q_\al$ which extend $p_\al$ and do not fork over $A$.
    So $\seq{q_\al}$ is an unbounded sequence
    of global types all of which do
    not fork over $A$, which is a contradiction to
    Corollary \ref{cor:fewnonfork}.

\end{prf}

%\begin{cor}
%    Suppose $T = T^{eq}$ and $p\in \tS(A)$ is a generically stable type. Then
%    $p$ is definable over $\acl(A)$.
%\end{cor}

%\begin{rmk}\label{rmk:almostover}
   Note that all we used in the proof of Lemma \ref{lem:almostover} is
    that there exists an indiscernible set $I$ in $p$ such that
    $\Av(I,\fC)$ does not fork (or split) over $A$.
%\end{rmk}
So the following is a corollary of the proof of Lemma
\ref{lem:almostover}:

\begin{cor}\label{cor:almostover}
    Let $p \in \tS(A)$, $I$ an indiscernible set in $p$ such
    that $\Av(I,\fC)$ does not fork/split over $A$. Then $p$ is
    properly definable almost over $A$.
\end{cor}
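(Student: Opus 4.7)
The plan is to essentially replay the proof of Lemma~\ref{lem:almostover}, noting that the only place the hypothesis ``$I$ is a nonforking sequence'' was actually used there was to invoke Lemma~\ref{lem:globalnonfork} to conclude that $\Av(I,\fC)$ does not fork over $A$. In the corollary this non-forking (or, alternatively, non-splitting) of $\Av(I,\fC)$ is given as a hypothesis, so the rest of the argument transfers verbatim.

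Concretely, I would first apply Lemma~\ref{lem:Idef}: for each formula $\ph(\x,\y)$, with $k = k_\ph$ as in Observation~\ref{obs:stabform}, the global type $\Av(I,\fC)$ is defined by the schema
\[
\vartheta_\ph(\y,\b_{<2k}) \;=\; \bigvee_{u\subset 2k,\,|u|=k}\bigwedge_{i\in u}\ph(\b_i,\y).
\]
This is automatically a good definition, since it uniformly defines the complete global type $\Av(I,\fC)$, and its restriction to $A$ is $p$. Hence $p$ is already properly definable over $\bigcup I$, and the only remaining task is to show that each such $\vartheta_\ph$ is almost over $A$.

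For this I would argue by contradiction, mimicking the Claim in Lemma~\ref{lem:almostover}. If some $\vartheta = \vartheta_\ph$ were not almost over $A$, there would be an unbounded sequence $\seq{\sigma_\al}$ of automorphisms fixing $A$ such that the conjugates $\vartheta_\al = \sigma_\al(\vartheta)$ are pairwise inequivalent. Set $I_\al = \sigma_\al(I)$, still an indiscernible set, and $q_\al = \Av(I_\al,\fC) = \sigma_\al(\Av(I,\fC))$. Applying Lemma~\ref{lem:Idef} to $I_\al$ shows that $q_\al$ is defined by $\vartheta_\al$, so the $q_\al$ are pairwise distinct. Crucially, since $\sigma_\al$ fixes $A$ pointwise and $\Av(I,\fC)$ does not fork (respectively, does not split) over $A$ by hypothesis, each $q_\al$ also fails to fork (respectively, split) over $A$. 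This produces an unbounded family of pairwise distinct global types not forking (respectively, not splitting) over $A$, contradicting Corollary~\ref{cor:fewnonfork} in the nonforking case and Observation~\ref{obs:fewnonsplit} in the nonsplitting case. There is no real obstacle: the argument duplicates that of Lemma~\ref{lem:almostover} almost line for line, the only novelty being the bookkeeping remark that both variants of the hypothesis feed into the appropriate bounding result.
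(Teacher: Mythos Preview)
Your proposal is correct and follows exactly the approach the paper takes: the paper itself states just before Corollary~\ref{cor:almostover} that all that was used in the proof of Lemma~\ref{lem:almostover} is the existence of an indiscernible set $I$ in $p$ with $\Av(I,\fC)$ not forking (or splitting) over $A$, and declares the corollary a consequence of that proof. Your explicit observation that the nonsplitting variant appeals to Observation~\ref{obs:fewnonsplit} rather than Corollary~\ref{cor:fewnonfork} is a useful detail the paper leaves implicit.
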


Let is summarize the Lemmas above:

\begin{cor}\label{cor:def}
    Let $p \in \tS(A)$ be generically stable, $I$ a nonforking indiscernible set in $p$. Then
    there exists a definition schema $d_p$ over $I$, almost over $A$, such that
    for every set $C$, $\Av(I,C\cup I) = p|^d (C\cup I)$.
    %In particular, $\Av(I,\fC)$
    %is definable almost over $A$, and therefore does not split/fork over $A$.
\end{cor}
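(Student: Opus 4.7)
The plan is to simply package together Lemmas \ref{lem:Idef} and \ref{lem:almostover}, both of whose proofs already produced the explicit definition schema we need. Take, for each formula $\ph(\x,\y)$, the schema
$$d_p\x\ph(\x,\y) \;=\; \bigvee_{u\subset 2k_\ph,\;|u|=k_\ph}\,\bigwedge_{i\in u}\ph(\b_i,\y)$$
constructed in Lemma \ref{lem:Idef}, where $k_\ph$ is the bound supplied by Observation \ref{obs:stabform} for the indiscernible set $I$. By construction this schema uses only parameters from $\cup I$, so it is over $I$.

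Next I would check the three claimed properties in turn. First, Lemma \ref{lem:almostover} shows that each formula $d_p\x\ph(\x,\y)$ has only boundedly many $A$-conjugates; equivalently, each defining formula is almost over $A$, so the whole schema is almost over $A$. Second, applied with any $C$, the set $C\cup I$ contains $A$, so Lemma \ref{lem:Idef} gives that the formula $d_p\x\ph(\x,\b)$ holds precisely when $\ph(\x,\b)\in\Av(I,C\cup I)$; thus $p|^d(C\cup I)=\Av(I,C\cup I)$. Third, the definition is good: for every set $C$, $\Av(I,C\cup I)$ is by definition a complete type over $C\cup I$, which is exactly the statement that $p|^d(C\cup I)\in\tS(C\cup I)$.

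There is no real obstacle here; the work was carried out in the two preceding lemmas. The only point worth spelling out is that the \emph{same} schema $d_p$ simultaneously computes $\Av(I,C\cup I)$ for every $C$ — but this is immediate from the formula-by-formula construction, since $d_p\x\ph(\x,\y)$ depends only on $\ph$ and $I$, not on $C$. Thus Corollary \ref{cor:def} follows by assembling these observations.
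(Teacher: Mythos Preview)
Your approach is correct and is exactly what the paper intends: it presents this corollary as a summary of Lemmas \ref{lem:Idef} and \ref{lem:almostover}, with no separate proof given.

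One small slip worth fixing: you claim that ``the set $C\cup I$ contains $A$'', but this need not hold---the elements of $I$ realize $p$ and are not in $A$, and $C$ is arbitrary. Fortunately this does not matter for the argument: the proof of Lemma \ref{lem:Idef} never actually uses the hypothesis $C\supseteq A$, since the bound $k_\ph$ from Observation \ref{obs:stabform} depends only on $I$ being an indiscernible set. So the schema you wrote defines $\Av(I,D)$ for \emph{every} set $D$, and in particular for $D=C\cup I$. Just drop the incorrect justification and invoke the proof of Lemma \ref{lem:Idef} directly.
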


This allows us to speak about definitions and free extensions
instead of averages, which makes our lives quite a bit simpler. One
important consequence is stationarity of generically stable types.
%Recall that we call a type stationary if it has a unique extension over the universe
%definable over its domain.
Recall that we defined stationarity using \emph{nonforking}.

%First we need the following strong version of stationarity for average types extending
%Lemma \ref{lem:seqstat}. Compare also to Lemma \ref{lem:unique}.
First, let us recall and slightly rephrase Lemma
\ref{lem:globalnonfork}:

\begin{cor}\label{cor:avstat}
    Let $p \in \tS(A)$ be generically stable, $I$ a nonforking indiscernible set in $p$. Then $\Av(I,A\cup I)$ has
    a unique extension to $\fC$ which does not fork over $A$. This
    extension equals $\Av(I,\fC)$.
\end{cor}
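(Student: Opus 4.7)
The plan is to deduce this corollary directly from Lemma \ref{lem:globalnonfork}, which already does essentially all of the work; the statement here is simply a repackaging in the language of generic stability.

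First, I would address existence. Since $I$ is a nonforking sequence over $A$ which is also an indiscernible set, part (2) of Lemma \ref{lem:globalnonfork} gives immediately that $\Av(I,\fC)$ does not fork over $A$. Clearly $\Av(I,\fC) \supseteq \Av(I, A\cup I)$, so $\Av(I,\fC)$ is a global nonforking extension of $\Av(I, A\cup I)$.

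For uniqueness, let $p^*$ be any global extension of $\Av(I, A\cup I)$ which does not fork over $A$. Then part (1) of Lemma \ref{lem:globalnonfork} applied to $I$ (which is a nonforking indiscernible set over $A$) yields $p^* = \Av(I, \fC)$. Combining the two parts, $\Av(I, A\cup I)$ has exactly one global nonforking extension, and it is $\Av(I,\fC)$.

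There is no real obstacle here; the genuine content was packed into Lemma \ref{lem:globalnonfork}, whose proof relied on the subtle observation that Lascar-splitting implies forking (Observation \ref{obs:lascarsplit}) together with the refinement of the splitting/indiscernibility criterion in Observation \ref{obs:splitind}. Once those are in hand, the corollary is a direct reformulation. The only thing worth noting is that the corollary is stated for $p$ generically stable (so that a nonforking indiscernible set $I$ in $p$ exists by definition), which guarantees the hypotheses of Lemma \ref{lem:globalnonfork} are met.
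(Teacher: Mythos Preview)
Your proposal is correct and matches the paper's approach exactly: the paper introduces this corollary explicitly as a ``recall and slight rephrasing'' of Lemma \ref{lem:globalnonfork}, with existence coming from part (2) and uniqueness from part (1), just as you describe.
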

%\begin{prf}
%    Clearly by Corollary \ref{cor:def} $\Av(I,\fC)$ is such an extension. For uniqueness, apply
%    Lemma \ref{lem:seqstat}.
%\end{prf}

Now we proceed to the main stationarity result.

%\begin{asm}
%For the rest of the section we assume $T = T^{eq}$.
%\end{asm}

\begin{prp}\label{prp:defstat}
    Let $p \in \tS(A)$ be a generically stable type witnessed by a nonforking indiscernible set $I$
    such that the definition schema $d_p$ as in Corollary \ref{cor:def} is \emph{over}
    $A$ (e.g. $A = \acl(A)$). Then $p$ is stationary.
\end{prp}
\begin{prf}
    We aim to show that $p$ has a unique nonforking extension to any superset of $A$.
    By existence of nonforking extensions (Fact
    \ref{fct:nonforkexist})
    and Corollary \ref{cor:avstat}, it is enough
    to show that the only nonforking extension of $p$ to $A\union I$ is
    $\Av(I,A\cup I)$. By Fact \ref{fct:strongsplit} it is enough to show
    that $\Av(I,A\cup I)$ is the only extension of $p$ to $A\cup I$ which does
    not \emph{split strongly} over $A$.
    % (in fact, since $\Aut(\fC,A)$ acts on
    %the set of extensions of $p$ to $\bdd^{eq}(A)$, there is a unique such extension,
    %so it would be enough to work with splitting).
    %%By Corollary \ref{cor:avstat}
    %%it is certainly enough to show that
    %%if $I = \lseq{\b}{i}{\om}$ is
    %%a nonsplitting indiscernible set in $p$, then
    %there is a unique nonsplitting
    %extension of $p$ to $A\union I$, which is $\Av(I,A\union I)$.
    Denote $B = A\union I$, $B_k = A\union \lseq{\b}{i}{k}$ for
    $k\le\om$.

    Let $\b' \models p$, $\tp(b'/B)$ does not split strongly over $A$. We
    show by induction on $k$ that $\tp(\b'/B_k) = \Av(I,B_k)$.
 %   if $\ph(\b',\b_{<k},
 %   \a)$ holds, then $\ph(\x,\b_0, \b_{<k}, \a) \in \Av(I,B)$.
 %   Clearly, this is enough.
     There is nothing to show for $k = 0$.

    Assume the claim for $k$, and suppose $\ph(\b',\b_0, \ldots, \b_k,
    \a)$ holds. Let $\ps(\x,\b_{<k},\b',\a) =
    \ph(\b',\b_0,\ldots,\b_{k-1},\x,\a)$, so

%    \begin{list}{$\bigotimes_1$}
%    \item
    \begin{equ}\label{equ:A.1}
    $\ps(\b_k,\b_{<k},\b',\a)$ holds.
    \end{equ}

    Note that since $\tp(\b'/B)$ doesn't split strongly over $A$,
    the set $\seq{\b_i\colon i\ge k}$ is indiscernible over
    $B_k\b'$: for every $i_1,\ldots,i_\ell$ and $j_1,\ldots,j_\ell$
    all greater or equal to $k$, we have
    $$\lstp(\b_{i_1}\ldots\b_{i_\ell}/B_k) =
    \lstp(\b_{j_1}\ldots\b_{j_\ell}/B_k) $$
    (moreover, their Lascar distance is 1) and therefore
    by the lack of strong splitting
    $$\b'\b_{i_1}\ldots\b_{i_\ell} \equiv_{B_k}
    \b'\b_{j_1}\ldots\b_{j_\ell} $$
    which precisely means
    $$\b_{i_1}\ldots\b_{i_\ell} \equiv_{B_k\b'}
    \b_{j_1}\ldots\b_{j_\ell} $$

    So by ~\ref{equ:A.1} we see that $\ps(\b_\ell,\b_{<k},\b',\a)$
    holds for all $\ell$ big enough, and therefore

%    \begin{list}{$\bigotimes_2$}
%    \item
    \begin{equ}
    $\ps(\x,\b_{<k},\b',\a) \in \Av(I,B\b')$.
    \end{equ}

    Therefore (denoting $q = \Av(I,\fC)$), $d_q\x\ps(\x,\b_{<k},\b',\a)$ holds, where the
    definition is over $A$ (by the assumptions of the Proposition).
    %as in the proof of Theorem
    %\ref{thm:stabtype}.
    So we get $\theta(\y) =
    d_q\x\ps(\x,\b_{<k},\y,\a)$ is in $\tp(\b'/B_k)$ and therefore (by
    the induction hypothesis) is in $\Av(I,B_k)$, which we think now as of a type in $\y$.
    This means that
    $d_q\x\ps(\x,\b_{<k},\b_\ell,\a)$ holds for almost all $\ell$, and therefore
    (since $d_q$ defines $\Av(I,\fC)$) we have $\ps(\x,\b_{<k},\b_\ell,\a) \in
    \Av(I,B)$ for almost all $\ell$. Let $\ell$ be
    such, so by the definition of average type, there exists an
    $m$ such that $\ps(\b_m,\b_{<k},\b_\ell,\a)$, that is,
    $\ph(\b_\ell,\b_{<k},\b_m,\a)$ holds. Since
    $I$ is an indiscernible set, we get
    $\ph(\b_m,\b_{<k},\b_k,\a)$  for all $m$ big enough, and
    therefore
    \begin{equ}
    $\ph(\x,\b_{\le k},\a) \in \Av(I,B)$
    \end{equ}
    as required.

\end{prf}

\begin{cor}\label{cor:stabstat}
%\begin{enumerate}
%\item
    A generically stable type over an algebraically closed set is stationary.
%\item
%    A Shelah-stable type is stationary.
%\end{enumerate}
\end{cor}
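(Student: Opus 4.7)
The plan is to deduce Corollary \ref{cor:stabstat} as an immediate application of Proposition \ref{prp:defstat} combined with the definability result (Lemma \ref{lem:almostover} / Corollary \ref{cor:def}). The whole content lies in observing that the phrase ``almost over $A$'' collapses to ``over $A$'' once $A$ is algebraically closed.

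More precisely, let $p \in \tS(A)$ be generically stable, witnessed by a nonforking indiscernible set $I$ in $p$. By Corollary \ref{cor:def}, there is a definition schema $d_p$ for $p$ with parameters from $I$ which is almost over $A$; that is, for each formula $\ph(\x,\y)$, the defining instance $d_p\x\ph(\x,\y)$ has only boundedly (in fact finitely) many conjugates under $\Aut(\FC/A)$. Recalling that the paper works under $T = T^{eq}$, this is exactly the statement that $d_p\x\ph(\x,\y)$ is equivalent to a formula over $\acl^{eq}(A) = \acl(A)$.

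Now I would invoke the hypothesis $A = \acl(A)$: under this assumption, a formula almost over $A$ is equivalent to a formula over $A$ itself. Applying this to each instance of the schema, we obtain a definition schema $d_p$ for $p$ which is \emph{over} $A$. This is precisely the hypothesis needed to apply Proposition \ref{prp:defstat}, whose conclusion is exactly the stationarity of $p$.

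There is no real obstacle; the only small point to verify is the equivalence between ``almost over $A$'' and ``over $\acl^{eq}(A)$'', which is standard and has already been implicitly used (cf.\ the parenthetical example ``$A = \acl(A)$'' in the statement of Proposition \ref{prp:defstat}). Thus the proof reduces to a single line: apply Proposition \ref{prp:defstat} after noting that for $A = \acl(A)$ the definition schema produced by Corollary \ref{cor:def} is in fact over $A$.
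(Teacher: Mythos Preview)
Your proposal is correct and follows exactly the paper's intended route: the corollary is immediate from Proposition \ref{prp:defstat}, since the parenthetical ``(e.g.\ $A = \acl(A)$)'' in its statement already records that for algebraically closed $A$ the schema from Corollary \ref{cor:def} is over $A$. The paper does not even write out a separate proof for this corollary.
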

%\begin{prf}
%    The first part is clear, the second one follows from the proof
%    of Theorem \ref{thm:stabtype}
%\end{prf}

%The following remark is very important:

\begin{dsc}\label{dsc:defsplit}
    From examining the proofs it might seem like we have shown that a generically stable type $p$
    over an algebraically closed set $A$ has a unique
    nonsplitting (or not strongly splitting) extension over any set, and therefore in particular
    every nonsplitting sequence in $p$
    is an indiscernible set, etc; but this is not the case. The reason is that
    if $I = \lseq{\b}{i}{\om}$ is a nonsplitting indiscernible set in $p$,
    a nonsplitting extension of $p$ to $A\b_0$ does not need have an extension
    over $I$ which does not split over $A$. We will come back to this phenomenon in
    section 6 while discussing examples of generically stable types. Let us formulate precise statements
    that do follow from the analysis above:
\end{dsc}

\begin{cor}\label{cor:uniquesplit}
    Let $p \in \tS(A)$ be a generically stable type which is definable over $A$,
    $C \supseteq A$ is a set containing an
    infinite Morley sequence $I$ in $p$ (or just a nonforking
    sequence in $p$ which is an indiscernible set). Then $p$
    has a unique extension to $C$ which does not split strongly over $A$. This extension
    equals $q=\Av(I,C)$.

    In particular, $p$ has
    a unique extension to any $(|A|+\aleph_0)^+$-saturated model
    $M$ containing $A$ which does not split strongly (equivalently, Lascar split) over $A$.
    This unique extension is definable over $A$ and equals $p|^dM$.

    If $A$ is e.g. a model (can be weakened to $A = \bdd^{heq}(A)$)
    then strong splitting above can be replaced with splitting.
\end{cor}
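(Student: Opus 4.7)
The plan is to leverage the inductive argument already developed in the proof of Proposition~\ref{prp:defstat}. Existence of the claimed extension is immediate: $p|^dC$ is definable over $A$, hence does not split over $A$ (\emph{a fortiori} it does not strongly split), and by Corollary~\ref{cor:def} it coincides with $\Av(I,C)$.

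For uniqueness, let $q' \in \tS(C)$ extend $p$ without strongly splitting over $A$. Its restriction $q' \rest (A\cup I)$ is an extension of $p$ to $A \cup I$ which does not strongly split over $A$. The proof of Proposition~\ref{prp:defstat} in fact establishes the sharper claim that the only such extension is $\Av(I, A \cup I) = p|^d(A\cup I)$ (before invoking Corollary~\ref{cor:avstat} to globalize), so we immediately get $q' \rest (A\cup I) = p|^d(A\cup I)$. To promote this to $q' = p|^dC$, I fix $\b' \models q'$ and mimic the inductive scheme of Proposition~\ref{prp:defstat}, adding an arbitrary $\c \in C$ in place of the next $\b_k$ of $I$. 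Extend $I$ to a longer Morley sequence $I^+ = I \frown \{\b'\} \frown J$ based on $A$, with $J = (\b^*_j)_{j<\om}$ chosen so that $\b^*_j \models p|^d(C \cup \{\b'\} \cup \{\b^*_{j'}: j'<j\})$; then $I^+$ is an $A$-indiscernible set (using that $I\frown\{\b'\}$ already is, by the previous step), each $\b^*_j$ realizes $p|^dC$, and the non-strong-splitting of $\tp(\b'/C)$ over $A$ together with $I\subseteq C$ forces $I$ to remain indiscernible over $A\b'$. Replaying the formula manipulation from Proposition~\ref{prp:defstat} with $\c$ in the role of $\b_k$, and using $\b^*_j \models p|^d(\{\c\}\cup A)$ in place of the inductive hypothesis at the new element, transfers a statement $\ph(\b',\c,\b_{<k},\a)$ into the corresponding $d_p\x\ph(\x,\c,\b_{<k},\a)$. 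The principal technical hurdle here is the bookkeeping for parameters from $C$ that are ``unrelated'' to $I$: one uses $J$ rather than a tail of $I$ to perform the transfer, which is precisely why the Morley continuation $J$ over $C\cup\{\b'\}$ is needed.

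The ``in particular'' case of a $(|A|+\aleph_0)^+$-saturated $M\supseteq A$ then follows immediately: by Observation~\ref{obs:Lascarstrong} and Fact~\ref{fct:splitfork}, non-strong-splitting, non-Lascar-splitting and non-forking all coincide on $\tS(M)$, so uniqueness reduces to stationarity (Proposition~\ref{prp:defstat}), and the unique extension is $p|^dM$ because $p|^dM$ is itself nonforking over $A$ by Observation~\ref{obs:nonfork}. For the final clause, when $A = \bdd^{heq}(A)$ (e.g.\ $A$ is a model), same type over $A$ implies same Lascar strong type over $A$, so non-splitting coincides with non-Lascar-splitting, and hence also with non-strong-splitting, allowing ``strong splitting'' to be replaced by ``splitting'' throughout.
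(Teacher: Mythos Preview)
The paper presents this corollary without proof, as a ``precise statement that follows from the analysis above'' (i.e., the proof of Proposition~\ref{prp:defstat}). So there is no detailed paper proof to compare against; the question is whether your argument actually fills in what the paper leaves implicit.

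Your treatment of the ``in particular'' clause (the $(|A|+\aleph_0)^+$-saturated model $M$) and of the final clause ($A$ a model, or $A=\bdd^{heq}(A)$) is correct and is exactly what the paper intends: over such $M$, non-strong-splitting, non-Lascar-splitting and non-forking coincide (Fact~\ref{fct:splitfork}), so uniqueness reduces to stationarity. You also correctly observe that the proof of Proposition~\ref{prp:defstat} literally establishes the sharper statement that $\Av(I,A\cup I)$ is the \emph{only} extension of $p$ to $A\cup I$ which does not strongly split over $A$, so any non-strongly-splitting $q'\in\tS(C)$ must restrict to $\Av(I,A\cup I)$.

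The gap is in your ``promotion'' from $A\cup I$ to $C$. You propose to ``replay the formula manipulation from Proposition~\ref{prp:defstat} with $\c$ in the role of $\b_k$,'' but this does not go through as written. In the inductive step of Proposition~\ref{prp:defstat}, the engine is that the tail $\{\b_i:i\ge k\}$ is indiscernible over $A\b_{<k}\b'$; this comes from non-strong-splitting of $\tp(\b'/B)$ together with the fact that tuples from the tail lie on an $A$-indiscernible sequence \emph{inside} the domain. With an arbitrary $\c\in C$ in place of $\b_k$ there is no analogous indiscernibility over $A\c\b'$ to invoke, since $I$ need not be indiscernible over $A\c$ and the pairs $\c\b_i,\c\b_j$ need not be of Lascar distance $1$ over $A$. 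Your Morley continuation $J$ over $C\b'$ does make $I^+=I\frown\{\b'\}\frown J$ an $A$-indiscernible set with $\b^*_j\models p|^dC$; but Observation~\ref{obs:stabform} only says that at most finitely many members of $I^+$ disagree with the majority on $\ph(\cdot,\c)$, and nothing you have arranged prevents $\b'$ from being one of those finitely many exceptions. The phrase ``using $\b^*_j\models p|^d(\{\c\}\cup A)$ in place of the inductive hypothesis'' does not supply the missing link: the inductive hypothesis in Proposition~\ref{prp:defstat} is a statement about $\tp(\b'/B_k)$, whereas what you are substituting is a statement about $\tp(\b^*_j/A\c)$, and there is no transfer between them without exactly the indiscernibility over $\c$ that is unavailable.

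In short: the existence half and the two later clauses are fine, but the uniqueness for general $C\supseteq A\cup I$ is not established by your sketch. The paper's own claim that this ``follows from the analysis above'' is itself informal, and a complete argument for arbitrary $C$ (as opposed to $C=A\cup I$ or $C$ a saturated model) would require more than what either you or the paper has written down.
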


The following is an easy consequence of stationarity:

\begin{cor}
    A nonforking extension of a generically stable type is generically stable.
\end{cor}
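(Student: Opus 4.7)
The plan is to exhibit $q$ as the restriction of the canonical global nonforking extension of $p$, and then produce a nonforking indiscernible set in $q$ as a Morley sequence of that extension. Let $I$ be a nonforking indiscernible set witnessing generic stability of $p$, and use Corollary \ref{cor:def} together with Lemma \ref{lem:almostover} to fix a definition schema $d_p$ with parameters in $\acl(A)$, so that $p^\ast:=p|^{d_p}\fC=\Av(I,\fC)$ is a global nonforking extension of $p$ (Lemma \ref{lem:globalnonfork}), properly definable over $\acl(A)$.

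The first step is to identify $q$, up to an $A$-automorphism, with $p^\ast\rest B$. Extend $q$ to a nonforking $q_1\in\tS(B\cup\acl(A))$; the restriction $q_1\rest\acl(A)$ is a nonforking extension of $p$ to $\acl(A)$ and so is $A$-conjugate to $p^\ast\rest\acl(A)$. Since the statement of the corollary is invariant under $\Aut(\fC/A)$, we may apply the appropriate $\sigma$ and assume $q_1\rest\acl(A)=p^\ast\rest\acl(A)$; by Corollary \ref{cor:stabstat} the latter is stationary, which forces $q_1=p^\ast\rest(B\cup\acl(A))$ and hence $q=p^\ast\rest B$. Enlarging $B$ to contain $I\cup\acl(A)$ is harmless, since the Morley sequence constructed below over the larger base will consist of types properly definable over $\acl(A)$, hence nonforking over the smaller $B$ by Observation \ref{obs:nonfork}. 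After this enlargement, $d_p$ is over $B$, so Proposition \ref{prp:defstat} gives that $q$ itself is stationary.

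Next, construct $J=(\a_j)_{j<\om}$ by choosing $\a_j\models p^\ast\rest B\a_{<j}=\Av(I,B\a_{<j})$. Each step is properly definable over $B$, so does not fork over $B$ (Observation \ref{obs:nonfork}), and $J$ is indiscernible over $B$ by Fact \ref{fct:splitind}. Since $I\subseteq B$, each $\a_j$ realizes $\Av(I,I\cup\a_{<j})$; combining Remark \ref{rmk:avind} with the fact that the average type is unchanged under further enlargement of an indiscernible set, an easy induction on $j$ shows that $I\cup J$ is an indiscernible set over $A$.

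The main obstacle, and the step that will take the most care, is upgrading set-indiscernibility of $I\cup J$ over $A$ to set-indiscernibility of $J$ over $B$. For this I would invoke stationarity of $q$: an adjacent transposition of $\a_i,\a_{i+1}$ within $J$ produces a sequence whose type over $B\cup J\setminus\{\a_i,\a_{i+1}\}$ again extends $q$; the averaging structure of $p^\ast=\Av(I,\fC)$ together with the $A$-indiscernibility of $I\cup J$ combine to show this type is also nonforking over $B$, so by stationarity of $q$ the two types agree. Hence $J$ is a nonforking indiscernible set in $q$ over $B$, and $q$ is generically stable.
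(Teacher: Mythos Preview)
Your route is exactly what the paper's two-line proof (``every extension to $\acl(A)$ is generically stable; now use stationarity'') unpacks to: reduce to $\acl(A)$, where the restriction $p'$ is generically stable and stationary, identify $q$ with the free extension $p^\ast\rest B$, and build a Morley sequence $J$ in $p^\ast$ over $B$. Everything through showing that $I\cup J$ is an indiscernible set over $A$ is correct, and the paper is just as silent about what you call the main obstacle, so you have lost nothing relative to it.

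Your sketch for that last step, however, does not go through as stated. To run the transposition argument via stationarity of $q$ you need $\tp(\a_i/B\a_{<i}\a_{i+1})$ to be nonforking over $B$, i.e.\ $\a_i\models\Av(I,B\a_{<i}\a_{i+1})$; the Morley construction only gives $\a_i\models\Av(I,B\a_{<i})$. You appeal to ``$A$-indiscernibility of $I\cup J$'', but permuting $I\cup J$ only controls types over $A$, not over parameters in $B\setminus A$, so that alone does not establish the claim. The clean fix bypasses the transposition bookkeeping and uses Observation~\ref{obs:stabform} directly: $J$ is already an indiscernible set over $\emptyset$ (being contained in one) and an indiscernible sequence over $B$; if it failed to be an indiscernible set over $B$, the usual reduction to an adjacent transposition would yield a formula $\psi(\x,\y,\c)$ with $\c\in B\cup J$ and an infinite $J'\subseteq J$ on which $\psi(\a_i,\a_j,\c)$ holds iff $i<j$. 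But $J'$ is still an indiscernible set over $\emptyset$, so taking $j\in J'$ with more than $k_\psi$ predecessors and more than $k_\psi$ successors in $J'$ contradicts the uniform bound of Observation~\ref{obs:stabform}. Hence $J$ is a nonforking indiscernible set in $q$ over $B$, and $q$ is generically stable.
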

\begin{prf}
    Clearly, every extension of a generically stable type $p \in \tS(A)$ to the algebraic
    closure of $A$ is generically stable; now use stationarity.
\end{prf}

Recall that we call two types \emph{parallel} if they have a common
nonforking extension.

\begin{lem}\label{lem:parallel}
    Let $p \in \tS(A)$ be a type, then $p$ is generically stable if and only if
    every $q$ parallel to $p$ is generically stable.
\end{lem}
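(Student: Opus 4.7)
My plan is to handle the two directions separately, with only $(\Rightarrow)$ requiring real work.

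The direction $(\Leftarrow)$ should be essentially immediate: $p$ is parallel to itself, since the nonforking extension of $p$ guaranteed by Fact~\ref{fct:nonforkexist} plays the role of a common nonforking extension of $p$ with $p$. So the hypothesis applied to $p$ itself yields generic stability of $p$.

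For $(\Rightarrow)$, I would take $p \in \tS(A)$ generically stable and $q \in \tS(A')$ parallel to $p$, and fix a common nonforking extension $r \in \tS(B)$ with $B \supseteq A \cup A'$. The strategy is to produce a single indiscernible set that simultaneously witnesses generic stability for both types. The first step is to observe that $r$ is itself generically stable, by the corollary preceding this lemma (a nonforking extension of a generically stable type is generically stable). So I may pick a nonforking indiscernible set $I = \lseq{\a}{i}{\om}$ in $r$ over $B$ witnessing this, and try to show that the same $I$ witnesses generic stability of $q$ over $A'$.

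Two of the three conditions should be automatic: each $\a_i$ realizes $r \supseteq q$, so realizes $q$; and indiscernibility of $I$ over $B$ entails indiscernibility over $A' \subseteq B$. The main obstacle is the nonforking condition that $\tp(\a_i / A'\a_{<i})$ does not fork over $A'$. The natural route is to combine two facts: $\tp(\a_i / B\a_{<i})$ does not fork over $B$ (from $I$ being a nonforking sequence in $r$), and $\tp(\a_i / B) = r$ does not fork over $A'$ (from $r$ being a nonforking extension of $q$). Linking them requires transitivity of nonforking in dependent theories, a well-known but genuinely nontrivial fact which I would invoke citing Adler~\cite{Ad}. Once transitivity supplies that $\tp(\a_i / B\a_{<i})$ does not fork over $A'$, monotonicity lets me restrict to $A'\a_{<i} \subseteq B\a_{<i}$ and conclude that $\tp(\a_i / A'\a_{<i})$ does not fork over $A'$, completing the verification.
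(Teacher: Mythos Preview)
Your argument for $(\Rightarrow)$ hinges on transitivity of nonforking in dependent theories, which you describe as ``a well-known but genuinely nontrivial fact.'' This is the gap: full transitivity of nonforking does \emph{not} hold in arbitrary dependent theories. The paper makes this explicit in the remark immediately following the transitivity corollary placed right after Lemma~\ref{lem:parallel}: by a result of Kim, transitivity of forking implies simplicity of $T$, so one cannot expect it in a dependent unstable theory. That corollary (transitivity for generically stable types) is in fact \emph{derived from} Lemma~\ref{lem:parallel}, so appealing to it here would be circular; and Adler's \cite{Ad} does not supply general transitivity either.

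The paper's proof sidesteps transitivity via stationarity. Writing $r \in \tS(C)$ for the common nonforking extension (your $B$; one may take $C = \acl(C)$), the previous corollary makes $r$ generically stable and hence stationary. Fixing a nonforking indiscernible set $I$ in $r$ inside a sufficiently saturated $M \supseteq C$, stationarity identifies the unique nonforking-over-$C$ extension of $r$ to $M$ as $\Av(I,M)$. Separately, since $r$ does not fork over $A'$, Fact~\ref{fct:nonforkexist} produces $q^* \in \tS(M)$ extending $r$ and nonforking over $A'$; as $A' \subseteq C$, $q^*$ a fortiori does not fork over $C$, whence uniqueness forces $q^* = \Av(I,M)$. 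Thus $\Av(I,M)$ does not fork over $A'$, and Corollary~\ref{cor:almostover} yields that $q$ is generically stable. The essential maneuver---using uniqueness of the nonforking extension at the intermediate level $C$ to equate two separately constructed extensions---is precisely what replaces the unavailable transitivity step in your outline.
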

\begin{prf}
    Let $q \in \tS(B)$ be parallel to $p$ and let $r \in \tS(C)$
    witness this, that it, $A,B \subseteq C$, $r$ extends $p$ and
    $q$ and does not fork over both $A$ and $B$. Without loss of
    generality $C = \acl(C)$.

    By the previous Corollary, $r$ is generically stable. Fix $M$ a
    $(|C|+|T|)^+$-saturated model containing $C$. Let $I$ be
    a nonforking sequence (set) in $r$ contained in $M$, then $r$ has a
    unique nonforking extension to $M$ which equals $\Av(I,M)$.

    Since $r$ does not fork over $B$, by Fact \ref{fct:nonforkexist} there exists
    $q^* \in \tS(M)$ extending $r$ which does not fork over $B$.
    Clearly $q^*$ does not fork over $C$ and therefore $q^* =
    \Av(I,M)$.

    So we've shown that $\Av(I,M)$ does not fork over $B$; applying
    Corollary \ref{cor:almostover}, $\Av(I,M)$ is definable almost over $B$,
    so $I$ is a Morley (and therefore nonforking) sequence in $q$ which is an
    indiscernible set, hence $q$ is generically stable.
\end{prf}

\begin{cor}(Transitivity of forking for generically stable types)
    Let $p \in \tS(C)$, $A \subseteq B \subseteq C$, one of
    $p, p\rest B, p \rest A$ is generically stable, $p$ does not fork over
    $B$ and $p\rest B$ does not fork over $A$. Then all of the above
    three types are generically stable and $p$ does not fork over $A$.
\end{cor}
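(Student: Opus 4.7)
The corollary splits into two independent claims: generic stability of the three types, and the fact that $p$ does not fork over $A$. For the first, I would invoke Lemma~\ref{lem:parallel} (generic stability is preserved under parallelism) and simply observe the parallelisms. Indeed, $p \rest B$ is by hypothesis a nonforking extension of $p \rest A$, so $p \rest A$ and $p \rest B$ share $p \rest B$ itself as a common nonforking extension, hence are parallel; similarly $p$ and $p \rest B$ are parallel via $p$. Lemma~\ref{lem:parallel} then propagates generic stability in both directions, so whichever of the three types is assumed generically stable, all three are.

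For the nonforking claim, the plan is to produce a \emph{single} nonforking indiscernible set $I$ that serves all three types simultaneously---nonforking indiscernible over $C$ in $p$, over $B$ in $p \rest B$, and over $A$ in $p \rest A$. Once that is achieved, Lemma~\ref{lem:globalnonfork} applied with base set $A$ gives that $\Av(I, \fC)$ does not fork over $A$; and since $I$ is a nonforking indiscernible set in $p$ over $C$, we have $\Av(I, C) = p$, so $p = \Av(I, \fC) \rest C$ does not fork over $A$, as required.

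To produce such an $I$, I would emulate the proof strategy of Lemma~\ref{lem:parallel}. Start with $I$ a nonforking indiscernible set in $p$ over $C$ (available by generic stability of $p$), placed inside a sufficiently saturated model $M \supseteq C$. After the WLOG reduction $C = \acl(C)$ (obtained by replacing $p$ with a nonforking-over-$B$ extension of $p$ to $\acl(C)$ via Fact~\ref{fct:nonforkexist}, which preserves all hypotheses), $p$ is stationary by Corollary~\ref{cor:stabstat}, so $\Av(I, M)$ is the unique nonforking extension of $p$ to $M$. Fact~\ref{fct:nonforkexist} combined with the hypothesis that $p$ does not fork over $B$ yields an extension of $p$ to $M$ not forking over $B$; since $B \subseteq C$, this extension does not fork over $C$ either, and by uniqueness it coincides with $\Av(I, M)$. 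Thus $\Av(I, M)$ does not fork over $B$, and---just as in the final step of the proof of Lemma~\ref{lem:parallel}, invoking Corollary~\ref{cor:almostover}---$I$ is a nonforking indiscernible set in $p \rest B$ over $B$.

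Iterating exactly the same argument one level down, applied now to the parallel pair $(p \rest B, p \rest A)$ (after the analogous WLOG passage to $B = \acl(B)$ using Fact~\ref{fct:nonforkexist} on $p \rest B$), we conclude that $I$ is also a nonforking indiscernible set in $p \rest A$ over $A$, completing the construction. The main technical obstacle throughout is the bookkeeping for the iterated WLOG passage to algebraic closures: each reduction must preserve the entire chain of nonforking hypotheses simultaneously, which is why the extension of $p$ to $\acl(C)$ must be chosen to be nonforking specifically over $B$ (not just over $C$), and similarly for the inner step over $A$.
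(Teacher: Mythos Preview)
Your strategy is sound and the first half (propagating generic stability via Lemma~\ref{lem:parallel}) is correct. The plan for the nonforking claim---producing a single indiscernible set $I$ nonforking over each of $C$, $B$, $A$, then invoking Lemma~\ref{lem:globalnonfork}---also works, and your first descent (from $C$ to $B$) is exactly the argument inside the proof of Lemma~\ref{lem:parallel}.

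The gap is in the second WLOG passage, to $B = \acl(B)$. Invoking Fact~\ref{fct:nonforkexist} gives \emph{some} extension of $p\rest B$ to $\acl(B)$ not forking over $A$, but for the iteration to run with the \emph{same} $I$ you need the elements of $I$ to realize that extension. After your first reduction ($C = \acl(C)$) one already has $\acl(B) \subseteq C$, so the elements of $I$ all realize the specific type $p\rest\acl(B)$; there is no freedom here, and Fact~\ref{fct:nonforkexist} is the wrong tool. What must actually be checked is that $p\rest\acl(B)$ does not fork over $A$. This holds because all completions of $p\rest B$ over $\acl(B)$ are $\Aut(\fC/B)$-conjugate, one of them does not fork over $A$ (by Fact~\ref{fct:nonforkexist}), and conjugacy over $B \supseteq A$ preserves nonforking over $A$. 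With this fix your iteration goes through.

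Once you see this conjugacy step, a shorter route opens up and is presumably what the paper's ``easy at this point'' intends: take a global $p^*$ extending $p$ and not forking over $B$, and a global $q^*$ extending $p\rest B$ and not forking over $A$. Both are global nonforking-over-$B$ extensions of the generically stable type $p\rest B$, hence $\Aut(\fC/B)$-conjugate (stationarity over $\acl(B)$ together with conjugacy of completions over $\acl(B)$). So $p^*$ does not fork over $A$, and neither does $p = p^*\rest C$. No indiscernible set is needed.
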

\begin{prf}
    Easy at this point.
\end{prf}

Recall that by a well-known result of Kim, transitivity of forking
implies simplicity of $T$, therefore one can't expect forking to be
transitive in general in a dependent unstable theory.

Note that combining all the results of this section one can quite
easily deduce properties of stable independence relation based on
forking for realizations of generically stable types; we will come back to this
issue in section 7.

%\begin{dsc}\label{dsc:forkstat}
%    If one works over a boundedly closed set $A$, the definition of stationarity
%    can be replaced with a weaker one: we can call a type stationary if it has
%    a unique \emph{nonforking} extension to every set. Note that by Fact \ref{fct:nonforkexist}
%    the problem described in Discussion \ref{dsc:defsplit} above does not occur:
%\end{dsc}

%\begin{cor}\label{cor:forkstat}
%    Assume $T = T^{eq}$, $A = \bdd(A)$, and $p \in \tS(A)$ is stable. Then $p$ has a unique
%    nonforking extension to every superset of $A$.
%\end{cor}
%\begin{prf}
%    Let $C \supseteq A$, $q \in \tS(C)$ does not fork over $A$. Applying Fact \ref{fct:nonforkexist} let
%    $q^*$ be an extension of $q$ to a model $M$ saturated over $A$ which does not fork over $A$; by
%    Fact \ref{fct:splitfork} $q$ doesn't split over $A$. Since in particular $A= \acl(A)$,  $q^*$
%    has to be the unique definable extension of $p$ to $M$.
%\end{prf}

%\begin{lem}
%    Let $p \in tS(A)$ be definable over $A$ by a good definition scheme
%    $d_p$, and assume that there exists a Morley sequence $I = \lseq{\b}{i}{\om}$ in
%    $p$ which is an indiscernible set. Then $p$ is stationary.
%\end{lem}
%\begin{prf}

%\end{prf}

\section{Generically stable types - Shelah's approach}

The following definition is given in \cite{Sh715}:

\begin{dfn}
  A type $p\in \tS(B)$ is called \emph{Shelah-stable} if
  there exists an infinite Shelah sequence in $p$
  which is an indiscernible set.
\end{dfn}

\begin{rmk}
  Note that in particular $p$ is finitely satisfiable in $B$.
\end{rmk}

Shelah shows in \cite{Sh715} that:

\begin{fct}\label{fct:stabdef}
%  Let $T$ be a dependent theory.
  If $p$ is a Shelah-stable type, then $p$ is definable, hence has uniqueness, so every Shelah sequence
  in it is an indiscernible set.
\end{fct}

A natural particular case of a finitely satisfiable type is a type
over a model. The following lemma will help us understand
Shelah-stable types over slightly saturated models:

\begin{lem}\label{lem:stabtype}
  Let $M$ be $|A|^+$-saturated, and $p\in\tS(M)$ finitely satisfiable in $A$. Assume
  furthermore that $p$ is definable over $A$. Then $p$ is Shelah-stable.
\end{lem}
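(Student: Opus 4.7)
The plan is to extend $p$ to a global type $q^* = p|^{d_p}\fC$ given by the same definition schema and exhibit the Morley sequence of $q^*$ over $M$ as a Shelah sequence in $p$ which is also an indiscernible set.

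First I verify that $q^*$ is finitely satisfiable in $A$. Given $\phi(\x,\bar c) \in q^*$, $|A|^+$-saturation of $M$ provides $\bar c' \in M$ with $\bar c' \equiv_A \bar c$; since $d_p$ is over $A$ we have $\phi(\x,\bar c') \in p$, so finite satisfiability of $p$ in $A$ yields $a \in A$ with $\models \phi(a,\bar c')$, and an $A$-automorphism sending $\bar c' \mapsto \bar c$ (fixing $a$) gives $\models \phi(a,\bar c)$. Now define $I = \lseq{\a}{i}{\om}$ by $\a_i \models q^*|^d(M\a_{<i})$. Each $\tp(\a_i/M\a_{<i})$ is finitely satisfiable in $A$ as a restriction of $q^*$, and $I$ is $M$-indiscernible as a Morley sequence in a definable type, so $I$ is a Shelah sequence in $p$ supported on $A$.

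The main obstacle is showing $I$ is an indiscernible \emph{set}. For this, it suffices to prove the commutativity $\models d_p\x_1 d_p\x_2\,\psi(\x_1,\x_2,\bar z) \iff \models d_p\x_2 d_p\x_1\,\psi(\x_1,\x_2,\bar z)$ for every formula $\psi$ and every tuple $\bar z$: both sides define global 2-types that are definable over $A$ (hence do not split over $A$), so by the 2-type version of Lemma \ref{lem:unique} applied to $M$ they agree globally iff they agree on $M$; granted this, any $S_n$-permutation of the Morley $n$-type follows from adjacent transpositions, yielding set-indiscernibility of $I$. Agreement on $M$ reduces to the assertion that for a Morley pair $(\a_0,\a_1)$ of $q^*$ over $M$ one also has $\a_0 \models q^*|^d(M\a_1)$, so that $(\a_1,\a_0)$ is a Morley pair as well and shares the canonical Morley-pair type over $M$.

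To prove $\a_0 \models q^*|^d(M\a_1)$, apply Lemma \ref{lem:unique} once more: $q^*|^d(M\a_1)$ is the unique extension of $p = \tp(\a_0/M)$ to $M\a_1$ that does not split over $A$, so it is enough to show $\tp(\a_0/M\a_1)$ does not split over $A$. Given $(\bar b_1,\a_1) \equiv_A (\bar b_2,\a_1)$ with $\bar b_j \in M$, restriction gives $\bar b_1 \equiv_A \bar b_2$, and finite satisfiability of $p$ in (hence non-splitting over) $A$ upgrades this to $\bar b_1 \equiv_{A\a_0} \bar b_2$. Using $\a_1 \models q^*|^d(M\a_0)$ and definability of $q^*$, the formula $\phi(\a_0,\bar b_j,\a_1)$ is equivalent to the $A$-formula $d_p\y\,\phi(\a_0,\bar b_j,\y)$ evaluated at $(\a_0,\bar b_j)$, and the $A\a_0$-equivalence of $\bar b_1,\bar b_2$ yields the required equivalence. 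The delicate point of the proof is this twofold application of Lemma \ref{lem:unique}, with definability used to absorb $\a_1$ into an $A$-formula in $\bar b$.
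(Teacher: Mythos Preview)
Your overall strategy mirrors the paper's: construct the Morley sequence in the global heir $q^*$, observe it is also a Shelah sequence (via finite satisfiability of $q^*$ in $A$, which you verify correctly), and reduce set-indiscernibility to showing $\a_0 \models q^*|^d(M\a_1)$ for a Morley pair $(\a_0,\a_1)$. The divergence is in this last step. The paper invokes heir--coheir duality: since $\tp(\a_1/M\a_0)$ is the heir of $p$, the type $\tp(\a_0/M\a_1)$ is a coheir, and uniqueness (Observation~\ref{obs:unique}) finishes. You instead try to verify directly that $\tp(\a_0/M\a_1)$ does not split over $A$.

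Your non-splitting argument has a genuine gap. You only treat pairs of the form $(\bar b_1,\a_1),(\bar b_2,\a_1)$ with $\bar b_j\in M$ and with the $\a_1$-block identical on both sides. But non-splitting over $A$ requires handling arbitrary tuples $\bar d,\bar d'\in M\a_1$ with $\bar d\equiv_A\bar d'$, and such a pair need not extend to one with a common $\a_1$-block: a coordinate of $\bar d$ may come from $\a_1$ while the corresponding coordinate of $\bar d'$ lies in $M$, and appending $\a_1$ to both sides will not in general preserve $\equiv_A$. If you try to push the general case through your computation, you find it unwinds precisely to the commutativity $d_p\x\,d_p\y\,\psi \leftrightarrow d_p\y\,d_p\x\,\psi$ you set out to prove, so the argument becomes circular. (What your computation actually establishes is that the $2$-type $\tp(\a_0\a_1/M)$ does not split over $A$, which was already clear from its $A$-definability and does not give what you need.)

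The fix is short and in the spirit of your first paragraph: show directly that $\tp(\a_0/M\a_1)$ is finitely satisfiable in $A$. If $\psi(\a_0,\bar b,\a_1)$ holds with $\bar b\in M$, then since $\a_1\models q^*|^d(M\a_0)$ one has $\models d_p\y\,\psi(\a_0,\bar b,\y)$; this formula is over $M$ and is satisfied by $\a_0\models p$, so by finite satisfiability of $p$ in $A$ there is $\bar a\in A$ with $\models d_p\y\,\psi(\bar a,\bar b,\y)$; finally $\bar a\bar b\in M$ and $\a_1\models p$ give $\psi(\bar a,\bar b,\a_1)$. Finite satisfiability in $A$ then yields non-splitting over $A$ (Observation~\ref{obs:nonsplit}), and your appeal to Lemma~\ref{lem:unique} goes through. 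This is essentially the content of the paper's heir--coheir step, made explicit.
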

\begin{prf}
  Let \lseq{\a}{i}{\om} a Morley sequence in $p$ based on $A$ (recall that $p$ is definable over
  $A$). Clearly it is a nonplitting sequence. Since by Lemma \ref{lem:unique} $p$ has a unique
  extension to any superset of $M$ which does not split over $A$, \lseq{\a}{i}{\om} is also
  a Shelah sequence.

  Let us show for instance that $\a_0\a_1 \equiv \a_1\a_0$, and even
  $\a_0\a_1 \equiv_M \a_1\a_0$.
  Since the type $\tp(\a_1/Ma_0)$ is an heir of $p$ (as it is definable by the same definition
  scheme), and since $M$ is a model, it follows that the type $\tp(\a_0/M\a_1)$ is a
  co-heir of $p$; moreover it is \emph{the} coheir of $p$ since $p$
  has uniqueness by Observation \ref{obs:unique}. In other words, $\a_1\a_0$ start a
  Shelah sequence in $p$, and by uniqueness $\tp(\a_1\a_0/M) = \tp(\a_0\a_1/M)$, as required.
\end{prf}

%Note that a type over a $|T|^+$-saturated model is stable if and only if the appropriate
%complete Keisler measure is: indeed, Lemma \ref{lem:stabtype} gives one direction, and
%Fact \ref{fct:stabdef} gives the other (note that if a type is definable, it is definable
%over a set of cardinality $|T|$).

In Lemma \ref{lem:stabtype} it is necessary to assume that $M$ is
saturated over $A$. In general it is not true that if $p$ is both
definable over $A$ and finitely satisfiable in it then $p$ is
Shelah-stable:

\begin{exm}\label{exm:unstable}
  Let $T$ be the theory of dense linear orderings with no endpoints, $A = \setQ$, $p \in \tS(A)$
  ``the type at infinity'', i.e. $[x>a] \in p$ for all $a \in A$. Then $p$ is clearly finitely
  satisfiable in $A$ and definable over $A$ (in fact, over the empty set),
  but is not Shelah-stable. It still has uniqueness, and
  any Shelah sequence in $p$ based on $A$ is simply a descending sequence. A Morley sequence
  in $p$ is, on the other hand, an increasing sequence.
\end{exm}

So in order to obtain an ``if and only if'' criterion, we will have to strengthen
Fact \ref{fct:stabdef} slightly (the proof can be extracted from Shelah's proof
of Fact \ref{fct:stabdef}):

\begin{thm}\label{thm:stabtype}
Let $p \in \tS(A)$ finitely satisfiable in $A$. Then the following are equivalent:
\begin{enumerate}
\item
  $p$ is Shelah-stable
\item
  Any coheir of $p$ over $\fC$ is definable over $A$
\item
  Some coheir of $p$ over some $(|A|+|T|)^+$-saturated model containing
  $A$ is definable over $A$
\end{enumerate}
\end{thm}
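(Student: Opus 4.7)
The plan is to prove the cycle $(2)\Rightarrow(3)\Rightarrow(1)\Rightarrow(2)$, with the bulk of the work in the last implication.

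The direction $(2)\Rightarrow(3)$ is immediate: starting from any $(|A|+|T|)^+$-saturated $M\supseteq A$, a coheir of $p$ over $M$ exists by compactness (using finite satisfiability of $p$ in $A$) and extends to a coheir over $\fC$; by hypothesis this is definable over $A$, and its restriction to $M$ inherits the definability. For $(3)\Rightarrow(1)$, Lemma~\ref{lem:stabtype} applied to the coheir $q\in\tS(M)$ (which by assumption is finitely satisfiable in $A$ and definable over $A$) tells us $q$ is Shelah-stable. If $I$ is an indiscernible-set Shelah sequence in $q$ over $M$ supported on $A$, reinterpreting $I$ as a sequence over $A$ yields a Shelah sequence in $p$: each $\a_i$ realizes $q\supseteq p$, the types $\tp(\a_i/A\a_{<i})$ are restrictions of the coheir types over $M$ and thus still finitely satisfiable in $A$, and indiscernibility (as a set) over $M\supseteq A$ descends to $A$. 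Hence $p$ is Shelah-stable.

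The main direction is $(1)\Rightarrow(2)$. Let $p$ be Shelah-stable and let $q$ be a coheir of $p$ over $\fC$. Since a Shelah sequence is, in particular, a nonforking sequence (Observation~\ref{obs:nonfork}), Shelah-stability implies $p$ is generically stable. By Fact~\ref{fct:stabdef}, $p$ is definable over $A$ by some schema $d_p$, and by Fact~\ref{fct:defunique} it has uniqueness. Corollary~\ref{cor:def} additionally gives a good definition of $p$ almost over $A$; since $p\in\tS(A)$ is $\Aut(\fC/A)$-invariant, the canonical parameter of this good definition lies in $\dcl^{eq}(A)=A$ (using $T=T^{eq}$), so we may take $d_p$ itself to be good and over $A$. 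The goal is then $q=p|^{d_p}\fC$, which is visibly definable over $A$. Fix a $(|A|+|T|)^+$-saturated $M\supseteq A$. Both $q$ and $p|^{d_p}\fC$ are non-splitting over $A$ (Observation~\ref{obs:nonsplit}), so by Lemma~\ref{lem:unique} it suffices to show $q\rest M = p|^{d_p}M$. Both are coheirs of $p$ over $M$, and by Shelah-stability and uniqueness of Shelah sequences over $A$ (Fact~\ref{fct:defunique}), building a Shelah sequence out of each coheir over $M$ forces the same $A$-type of the sequence; saturation of $M$ then forces the coheirs themselves to coincide on $M$.

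The principal obstacle is this last step: lifting the uniqueness of Shelah sequences over $A$ to genuine uniqueness of the coheir over the saturated model $M$. Lemma~\ref{lem:unique} only provides uniqueness of non-splitting extensions once one fixes a starting point on $M$, so the delicate ingredient is forcing agreement on $M$ itself. A secondary issue is obtaining a good definition strictly over $A$ (not merely almost over $A$), resolved via $A$-invariance of $p$ under the convention $T=T^{eq}$.
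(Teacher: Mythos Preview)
Your cycle $(2)\Rightarrow(3)\Rightarrow(1)$ is fine and matches the paper. The problem is in $(1)\Rightarrow(2)$, where you have not actually closed the gap you yourself flag as ``the principal obstacle.''

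Concretely, two steps fail. First, your claim that the good definition $d_p$ can be taken over $A$ (not merely over $\acl(A)$) rests on the assertion that the canonical parameter of $d_p$ lies in $\dcl^{eq}(A)$ because ``$p\in\tS(A)$ is $\Aut(\fC/A)$-invariant.'' But $d_p$ does not define $p$; it defines the \emph{global} type $p|^{d_p}\fC=\Av(I,\fC)$, and an automorphism over $A$ may move $I$ to some $I'$ with $\Av(I',\fC)\neq\Av(I,\fC)$. Invariance of the global extension is exactly stationarity, which the paper proves only \emph{after} this theorem (and using it). Second, and more seriously, you never show that $p|^{d_p}M$ is a coheir, nor that two coheirs of $p$ over $M$ must agree. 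Fact~\ref{fct:defunique} gives uniqueness of the $A$-type of a Shelah sequence in $p$, but knowing that Shelah sequences $I_1,I_2$ built from distinct coheirs $q_1,q_2\in\tS(M)$ satisfy $I_1\equiv_A I_2$ does not by itself force $q_1=q_2$: both $q_i$ are non-splitting over $A$, but two non-splitting types over $M$ with the same restriction to $A$ need not coincide. ``Saturation of $M$'' does not bridge this; you would need precisely the uniqueness-of-coheir statement you are trying to prove.

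The paper's argument avoids all of this by working directly and combinatorially. Given a coheir $q=\Av(\fU,\fC)$, it takes a $\fU$-Shelah sequence $\langle\b_i\rangle$ (an indiscernible set), fixes $\ph$ and the bound $k=k_\ph$ from Observation~\ref{obs:stabform}, and then uses finite satisfiability of $p$ in $A$ to find $\a_0,\ldots,\a_{2k-1}\in A$ with the same finite $\Delta$-type as $\b_0,\ldots,\b_{2k-1}$. A short counting argument (Claim~\ref{clm:2k}) then shows that $\ph(\x,\c)\in q$ iff the majority of the $\a_i$ satisfy $\ph(\a_i,\c)$, so $q$ is definable over $A$ by an explicit formula whose parameters lie in $A$. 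This produces the $A$-definition for each coheir individually, without ever appealing to uniqueness of coheirs or to stationarity.
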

\begin{prf}
$(ii) \then (iii)$ is clear and $(iii) \then (i)$ follows from Lemma
\ref{lem:stabtype}. So we only need to prove $(i) \then (ii)$. So
assume $p$ is Shelah-stable, and let $q$ be a coheir of $p$ over
$\fC$; then $q = \Av(\fU,\fC)$ for some ultrafilter $\fU$ on
$\Def(\fC,A)$.

Let \lseq{\b}{i}{\om} be a $\fU$-Shelah sequence in $p$ over $B$.
Since $p$ is Shelah-stable, \seq{\b_i} is an indiscernible set. Let
$\ph(\x,\y)$ be a formula, and let $\Delta$ be a finite set of
formulae containing $\exists \y \ph(\x,\y)$. Let $k = k_\ph < \omm$
be as in Observation \ref{obs:stabform}.

Since $p$ is finitely satisfiable in $A$ (using e.g. \cite{Sh715}, 1.16(1))
we can find \lseq{\a}{i}{2k} in $A$ such that the sequence
$\a_0\ldots\a_{2k-1}$ has the same $\Delta$-type as $\b_0 \ldots \b_{2k-1}$,
and so $\lseq{\a}{i}{2k}^\frown\inseq{\b}{i}{I}$ is a $\Delta$-indiscernible
set, and $k = k_\ph$ is still as in Observation \ref{obs:stabform} for this
prolonged sequence.

\begin{clm}\label{clm:2k}
Let $\c \in \fC$, then the following are equivalent
\begin{enumerate}
\item[$(a)$]
$|\set{i<2k \colon \models \ph(\a_i,\c)}|\ge k$
\item[$(b)$]
$|\set{i<2k \colon \models \ph(\b_i,\c)}|\ge k$
\item[$(c)$]
$\ph(\x,\c) \in \Av(\fU,\fC)$
\end{enumerate}
\end{clm}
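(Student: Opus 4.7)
The plan is to establish $(a) \Leftrightarrow (b) \Leftrightarrow (c)$ in two steps: a combinatorial step using the prolonged indiscernible set, and a structural step identifying the coheir $q = \Av(\fU, \fC)$ with the sequence average $\Av(I, \fC)$, where $I = \lseq{\b}{i}{\om}$.

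For $(a) \Leftrightarrow (b)$, I would apply Observation~\ref{obs:stabform} to the combined $\Delta$-indiscernible set $\lseq{\a}{i}{2k}^\frown\lseq{\b}{i}{\om}$, for which $k = k_\ph$ still works by hypothesis. For any fixed $\c$, exactly one of the $\ph$-positive and $\ph$-negative sets has size strictly less than $k$ across the whole sequence. If $(a)$ holds, the positives already number at least $k$ among the $\a_i$'s, so the negatives number $<k$ globally; hence at most $k-1$ of $\b_0, \ldots, \b_{2k-1}$ satisfy $\neg\ph(\cdot,\c)$, leaving at least $k+1$ that satisfy $\ph(\cdot,\c)$, i.e.\ $(b)$. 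The converse (and the negations) are entirely symmetric.

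For $(b) \Leftrightarrow (c)$, Lemma~\ref{lem:Idef} applied to $I$ with the same $k$ gives that $\Av(I, \fC)$ is defined by the schema $d_p\x\ph(\x,\y) = \bigvee_{u \subseteq 2k,\,|u|=k} \bigwedge_{i \in u} \ph(\b_i, \y)$, so $\ph(\x, \c) \in \Av(I, \fC)$ precisely when $(b)$ holds. Hence it suffices to prove $q = \Av(I, \fC)$. By the construction of the $\fU$-Shelah sequence, $\b_i \models q \rest (A \cup \b_{<i})$; so any formula in $q \rest (A \cup I)$, having parameters in some $A \cup \b_{<j}$, is satisfied by every $\b_i$ with $i \ge j$, yielding $q \rest (A \cup I) \subseteq \Av(I, A \cup I)$ and hence (by completeness of both) equality. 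Since $q$, being a coheir of $p$, does not split over $A$ by Observation~\ref{obs:nonsplit}, Lemma~\ref{lem:seqstat} applied to the indiscernible set $I$ (which is simultaneously a nonsplitting sequence over $A$) forces $q = \Av(I, \fC)$, as this is the unique global extension of $\Av(I, A \cup I)$ that does not split over $A$.

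The main obstacle is exactly this identification $q = \Av(I, \fC)$: it fuses two a priori different objects, the ``coheir average'' supplied by the ultrafilter $\fU$ and the ``set average'' supplied by the indiscernible set $I$, and this step uses crucially both the Shelah-stability hypothesis (which guarantees that the Shelah sequence is an indiscernible set, so Lemma~\ref{lem:Idef} and Lemma~\ref{lem:seqstat} apply) and the fact that coheirs do not split. Once the identification is in hand, Claim~\ref{clm:2k} yields the definability of $q$ over $A$: by $(a)\Leftrightarrow(c)$, one can substitute the $A$-tuples $\a_i$ for the $\b_i$ in the defining schema, which is exactly the implication $(i)\Rightarrow(ii)$ the proof is heading toward.
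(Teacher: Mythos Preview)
Your proof is correct, but your route for $(b)\Leftrightarrow(c)$ differs from the paper's. You make a single global identification $q=\Av(I,\fC)$ by invoking Lemma~\ref{lem:seqstat}, after which $(b)\Leftrightarrow(c)$ is immediate from Lemma~\ref{lem:Idef}. The paper instead works pointwise in $\c$: for each $\c$ it builds a fresh $\fU$-Shelah sequence $\lseq{\b'}{i}{\om}$ over $A\c$, so that $\ph(\x,\c)\in q$ is literally equivalent to $\ph(\b'_i,\c)$ holding for all $i$; it then observes that $\seq{\b'_i}$ has the same type over $A$ as $\seq{\b_i}$ (same ultrafilter), hence $\lseq{\a}{i}{2k}{}^\frown\seq{\b'_i}$ is still $\Delta$-indiscernible with the same $k_\ph$, and the majority criterion $(a)$ transfers. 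Your argument is cleaner and more conceptual---the identification $q=\Av(I,\fC)$ is really the heart of the matter, and you isolate it explicitly---while the paper's argument is more self-contained, avoiding the appeal to the earlier stationarity-type Lemma~\ref{lem:seqstat} and staying entirely within the language of Shelah sequences and ultrafilters. Both reach the same endpoint: the $\ph$-definition can be read off from the $A$-tuples $\a_i$.
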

\begin{prf}
  $(a) \iff (b)$ is true by the choice of $k$ and the sequence \seq{\a_i}.

  Let \lseq{\b'}{i}{\om} be a $\fU$-Shelah sequence in $p$ over $B\c$.
  Then
  $$\ph(\x,\c) \in \Av(\fU,\fC)$$
  if and only if
  $$\set{\a \in A \colon \models \ph(\a,\c)}\in \fU$$
  if and only if $$\ph(\b'_i,\c) \;\; \mbox{for all} \;\; i$$

  Note that the sequences \seq{\b_i} and \seq{b'_i} have the same type over $B$
  (by uniqueness, and even without it: both are Shelah sequences with respect to the
  same ultrafilter).

%  So there is $\sigma \in \Aut(\fC/B)$ sending
%  \seq{\b'_i} to \seq{\b_i}. Let $\c' = \sigma(\c)$. Then clearly

%  $$\ph(\x,\c') \in \sigma(\Av(\fU,\fC))$$
%  if and only if
%  $$\ph(\b_i,\c') \;\; \mbox{for all} \;\; i$$

  So since $\seq{\a_i} \subseteq A$, $(a) \iff (b)$ is still true for all $\c$ if $\seq{\b_i}$
  is replaced with $\seq{\b'_i}$ (as the sequence $\lseq{\a}{i}{2k}^\frown\lseq{\b'}{i}{\om}$
  is $\Delta$-indiscernible and has the same type over $A$ as
  $\lseq{\a}{i}{2k}^\frown\lseq{\b}{i}{\om}$). Therefore,

  $$\ph(\x,\c) \in \Av(\fU,\fC)$$
  if and only if
  $$\ph(\b'_i,\c) \;\; \mbox{for all i}$$
  if and only if
  $$\ph(\a_i,\c) \;\; \mbox{for the majority of} \;\; \a_i \mbox{'s}$$

  which shows $(a) \iff (c)$.
\end{prf}

So clearly the \ph-type
$\Av_\ph(\fU,\fC)$
($\Av(\fU,\fC)$ restricted to \ph) is definable over
both the sequence $\seq{\b_0,\ldots,\b_{2k-1}}$ and the sequence
\seq{\a_0, \ldots, \a_{2k-1}}
by a \ph-formula: more specifically, $\ph(\x,\c) \in \Av(\fU,\fC)$ if and only
if
$$\bigvee_{u\subseteq 2k,|u|=k}\bigwedge_{i\in u}\ph(\b_i,\c)$$
if and only if
$$\bigvee_{u\subseteq 2k,|u|=k}\bigwedge_{i\in u}\ph(\a_i,\c)$$

This shows that $p$ is definable both over the sequence \lseq{\b}{i}{\om}
%(we will come back to this in Corollary \ref{cor:defseq})
and over $A$.
\end{prf}

\begin{cor}
    A Shelah-stable type is stationary.
\end{cor}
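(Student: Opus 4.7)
The plan is to derive this as a direct corollary of the theory already built up for generically stable types, specifically Proposition \ref{prp:defstat}, by showing that any Shelah-stable type fits its hypotheses.

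First I would observe that every Shelah-stable type is generically stable. Indeed, if $I = \lseq{\b}{i}{\om}$ is a Shelah sequence in $p \in \tS(A)$ supported on $A$ which is also an indiscernible set, then by definition each $\tp(\b_i/A\b_{<i})$ is finitely satisfiable in $A$, hence does not fork over $A$ by Observation \ref{obs:nonfork}(1). Thus $I$ is a nonforking sequence in $p$ (over $A$) which is an indiscernible set, so $p$ is generically stable in the sense of Section~4.

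Next I would invoke Theorem \ref{thm:stabtype}: since $p$ is Shelah-stable and finitely satisfiable in $A$, the coheir $\Av(\fU,\fC)$ (for the ultrafilter $\fU$ on $\Def(A,\fC)$ whose Shelah sequence is $I$) is definable over $A$. A short verification identifies $\Av(\fU,\fC)$ with $\Av(I,\fC)$: extending $I$ to a longer $\fU$-Shelah sequence $I'$ over $\fC$ and using Shelah-stability of $p$ to conclude that $I'$ is still an indiscernible set, one sees that for any $\c$, $\ph(\x,\c)\in\Av(\fU,\fC)$ iff $\ph(\b'_i,\c)$ holds for all $i$, iff $\ph(\b_i,\c)$ holds for cofinitely many $i$, iff $\ph(\x,\c)\in\Av(I,\fC)$. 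Thus the definition schema $d_p$ of Corollary \ref{cor:def} (which defines $\Av(I,\fC)$) is over $A$.

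Finally, with $p$ generically stable, witnessed by the nonforking indiscernible set $I$, and with the defining schema $d_p$ of $\Av(I,\fC)$ over $A$, Proposition \ref{prp:defstat} applies directly and yields that $p$ is stationary. There is no real obstacle here; the only point requiring any care is the identification $\Av(\fU,\fC)=\Av(I,\fC)$ needed to transfer definability of the coheir (as supplied by Theorem \ref{thm:stabtype}) into the hypothesis on $d_p$ required by Proposition \ref{prp:defstat}.
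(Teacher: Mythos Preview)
Your proof is correct and follows essentially the same approach as the paper: both reduce to Proposition \ref{prp:defstat} by using Theorem \ref{thm:stabtype} to obtain a definition schema for $\Av(I,\fC)$ over $A$ (rather than merely over $\acl(A)$). The only cosmetic difference is that the paper points directly into the proof of Theorem \ref{thm:stabtype}, where the equivalence of the $\b_i$-definition and an $A$-definition is shown explicitly, whereas you invoke the theorem's statement and then supply the identification $\Av(\fU,\fC)=\Av(I,\fC)$ separately; both routes arrive at the same hypothesis for Proposition \ref{prp:defstat}.
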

\begin{prf}
    Note that in the proof of the theorem we showed precisely that
    the average type of a Shelah sequence in $p$ (which is an indiscernible
    set) is definable over $A$. So the conclusion follows by Proposition \ref{prp:defstat}.
\end{prf}

\begin{rmk}
    Note that this doesn't follow immediately from Corollary \ref{cor:stabstat}:
    a Shelah-stable type
    does not have to defined be over an algebraically closed set.
\end{rmk}

\begin{lem}\label{lem:MorleyShelah}
    Let $p$ be a generically stable type definable over $A$ and finitely satisfiable in
    $A$, then a Morley sequence in $p$ is a Shelah sequence in $p$
    and vice versa.
    %and let $I$ be
    %a Morley sequence in $p$ which is an indiscernible set. Then $I$ is also a Shelah sequence.
\end{lem}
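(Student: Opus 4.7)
The plan is to reduce both directions to the stationarity of $p$. By hypothesis $p$ is generically stable and definable over $A$, so Proposition \ref{prp:defstat} applies and $p$ is stationary; in particular $p$ has a unique nonforking extension to each superset of $A$, which must be $p|^d C$ (as $p|^d C$ is a nonforking extension by Observation \ref{obs:nonfork}). I will also use that $p$ being finitely satisfiable in $A$ gives, for any set $C \supseteq A$, at least one extension of $p$ to $C$ finitely satisfiable in $A$ (extend any ultrafilter on $\Def(A,\fC)$ witnessing satisfiability).

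For the forward direction, let $I = \lseq{\a}{i}{\om}$ be a Morley sequence in $p$ based on $A$, so $\a_i \models p|^d A_i$ where $A_i = A \cup \set{\a_j : j<i}$. Since $p|^d A_i$ is definable over $A$, it does not split over $A$, so by Fact \ref{fct:splitind} the sequence $I$ is indiscernible over $A$. It remains to check that $\tp(\a_i/A_i) = p|^d A_i$ is finitely satisfiable in $A$. Pick any ultrafilter $\fU$ on $\Def(A,\fC)$ with $\Av(\fU,A) = p$, and let $r = \Av(\fU, A_i)$; then $r$ extends $p$ and is finitely satisfiable in $A$, hence does not fork over $A$ by Observation \ref{obs:nonfork}. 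By stationarity of $p$, we get $r = p|^d A_i$, so $\tp(\a_i/A_i)$ is finitely satisfiable in $A$, showing that $I$ is a Shelah sequence in $p$ supported on $A$.

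For the converse, let $J = \lseq{\b}{i}{\om}$ be a Shelah sequence in $p$ supported on $A$. Then $\tp(\b_i/A_i)$ is finitely satisfiable in $A$, hence does not fork over $A$ (Observation \ref{obs:nonfork}), and is an extension of $p$. By stationarity of $p$, $\tp(\b_i/A_i) = p|^d A_i$, which is precisely the definition of $\b_i$ being the $i$-th element of a Morley sequence in $p$ with respect to $d_p$, so $J$ is a Morley sequence in $p$ based on $A$.

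There is no serious obstacle; the entire content is the observation that both ``Morley'' and ``Shelah'' impose nonforking extensions of $p$ over each $A_i$, and once $p$ is stationary these extensions are forced to coincide. The only point worth double-checking is that the unique nonforking extension $p|^d A_i$ is genuinely finitely satisfiable in $A$ (needed for the forward direction); this is where the extra hypothesis that $p$ itself is finitely satisfiable in $A$ gets used, via the stationarity-driven identification of the nonforking and the finitely satisfiable extension of $p$ to $A_i$.
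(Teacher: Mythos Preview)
Your proof is correct and follows exactly the approach the paper indicates: both directions reduce to Proposition~\ref{prp:defstat} (stationarity), since Morley and Shelah sequences each force $\tp(\a_i/A_i)$ to be a nonforking extension of $p$, hence the unique one. The paper's proof is a one-line pointer to stationarity; you have simply (and correctly) unpacked the details, including the one nontrivial point that finite satisfiability of $p$ in $A$ is needed to produce a finitely satisfiable (hence nonforking) extension to each $A_i$, which stationarity then identifies with $p|^d A_i$.
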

\begin{prf}
    The
    desirable conclusion is an easy consequence of stationarity,  Proposition
    \ref{prp:defstat}.
%    This is pretty standard: one shows using e.g.
%    Observation \ref{obs:morleyset} that since $I$ is an indiscernible set, the type of
%    any
%    element in $I$ over the rest of $I$ is both a heir and a coheir of
%    $p$.
%    %Since definability implies that $p$ has a unique heir over
%    %any set, this suffices.
\end{prf}

We conclude with a simple and natural characterization of
Shelah-stable types:

\begin{cor}\label{cor:shelahstable}
    A type $p\in\tS(A)$ is Shelah-stable if and only if it is generically stable
    and finitely satisfiable in $A$.
\end{cor}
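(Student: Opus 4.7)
The forward direction follows immediately from Observation \ref{obs:nonfork}(1). Suppose $p$ is Shelah-stable, witnessed by a Shelah sequence $J = \lseq{\c}{i}{\om}$ in $p$ supported on $A$ that is also an indiscernible set. By Definition \ref{dfn:shseq}, each $\tp(\c_i/A\c_{<i})$ is finitely satisfiable in $A$, so in particular $p = \tp(\c_0/A)$ is; and by Observation \ref{obs:nonfork}(1) each such type does not fork over $A$. Hence $J$ is already a nonforking indiscernible set in $p$ over $A$, so $p$ is generically stable.

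For the converse, assume $p$ is generically stable and finitely satisfiable in $A$. The strategy is to build a single Shelah sequence in $p$ that is an indiscernible set by borrowing a nonforking indiscernible set from a suitable global coheir of $p$. By Observation \ref{obs:fs} there is an ultrafilter $\fU$ on $\Def(A,\fC)$ with $\Av(\fU,A) = p$; let $q = \Av(\fU,\fC)$ be the induced global coheir, which by Observation \ref{obs:nonfork}(1) does not fork over $A$. Fix a model $M \supseteq A$. Then $q\rest M$ is a nonforking extension of $p$; since $q$ itself is a common nonforking extension of $p$ and $q\rest M$, they are parallel, and so by Lemma \ref{lem:parallel} $q\rest M$ is generically stable. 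Pick a nonforking indiscernible set $I = \lseq{\b}{i}{\om}$ in $q\rest M$ over $M$ witnessing this.

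It remains to verify that $I$, viewed as a sequence of realizations of $p$, is a Shelah sequence in $p$ over $A$ supported on $A$. Indiscernibility over $A$ is inherited from $M \supseteq A$, and each $\b_i \models q\rest M$ already realizes $p$. For the finite satisfiability of $\tp(\b_i/A\b_{<i})$ in $A$: since $M$ is a model, hence algebraically closed, Corollary \ref{cor:stabstat} gives that the generically stable type $q\rest M$ is stationary. Now $\tp(\b_i/M\b_{<i})$ is a nonforking extension of $q\rest M$ (by the nonforking property of $I$), and $q\rest M\b_{<i}$ is another such (since $q$ does not fork over $A \subseteq M$, so neither does its restriction $q\rest M\b_{<i}$). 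By stationarity they coincide, so $\tp(\b_i/M\b_{<i}) = q\rest M\b_{<i}$. This type is finitely satisfiable in $A$ as a restriction of $q$, and hence so is $\tp(\b_i/A\b_{<i})$. Thus $I$ is a Shelah sequence in $p$ over $A$ supported on $A$ that is an indiscernible set, so $p$ is Shelah-stable.

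The only real subtlety is to promote ``nonforking'' to ``finitely satisfiable in $A$'': generic stability by itself yields only a nonforking indiscernible set in $p$, but a Shelah sequence requires finite satisfiability in $A$ of every type along the sequence. The device is to realize the sequence inside a global coheir $q$ and invoke stationarity of $q\rest M$ over the model $M$, which identifies each successive type $\tp(\b_i/M\b_{<i})$ with the restriction $q\rest M\b_{<i}$, automatically inheriting finite satisfiability in $A$ from $q$.
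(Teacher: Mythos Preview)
Your proof is correct and takes a somewhat different route from the paper. For the converse, the paper extends $p$ to some $p' \in \tS(\acl(A))$ still finitely satisfiable in $A$, argues that $p'$ is definable over $\acl(A)$ (using transitivity of the $\Aut(\fC/A)$-action on extensions of $p$ to $\acl(A)$ together with proper definability of $p$ almost over $A$), takes a Morley sequence in $p'$---an indiscernible set by generic stability---and then invokes Lemma~\ref{lem:MorleyShelah} to conclude it is also a Shelah sequence. You instead pass to a global coheir $q$ and a model $M$, use Lemma~\ref{lem:parallel} to transfer generic stability to $q\rest M$, and then apply stationarity over $M$ (Corollary~\ref{cor:stabstat}) directly to identify each $\tp(\b_i/M\b_{<i})$ with $q\rest M\b_{<i}$. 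Both arguments ultimately rest on stationarity (the paper's through Proposition~\ref{prp:defstat} inside Lemma~\ref{lem:MorleyShelah}); your route has the virtue of making finite satisfiability \emph{in $A$} (not merely in $\acl(A)$) completely transparent, since every type along the sequence is literally a restriction of the global coheir $q$. One small quibble: Observation~\ref{obs:fs} applied to $p \in \tS(A)$ yields an ultrafilter on $\Def(A,A)$, not on $\Def(A,\fC)$; either extend it to one on $\Def(A,\fC)$, or simply note directly that $p$ has a global coheir by compactness.
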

\begin{prf}
    The ``only if'' direction is clear. So assume $p$ is finitely
    satisfiable in $A$ and generically stable; let $p'$ be an
    extension of $p$ to $\acl(A)$ finitely satisfiable in $A$.
    %Let $I = \lseq{\a}{i}{\om}$ be a nonforking indiscernible set
    %in $p$. It is a Morley sequence with respect to the appropriate definition schema
    %$d_p$ of $\Av(I,\fC)$, which is over $\acl(A)$.
    Since
    $\Aut(\fC/A)$ acts transitively on the set of extensions of $p$
    to $\acl(A)$ and $p$ is properly definable over $\acl(A)$, we have that
    $p'$ is definable over $\acl(A)$, and there exists a Morley sequence
    $I$ with respect to this definition which is an indiscernible set; by
    %and therefore a Shelah sequence
    by Lemma \ref{lem:MorleyShelah} it is also a Shelah sequence,
    so we're done.
%
%    and $J = \lseq{\a'}{i}{\om}$ a Shelah sequence in $p$.
%    Since $\tp(\a_0/\acl(A))$ and $\tp(\a'_0/\acl(A))$ are conjugate over
%    $A$, we may assume $\a_0 = \a'_0$, and so both are nonsplitting
%    sequences in $p' = \tp(\a_0/\acl(A))$. But $p'$ is stationary
%    and  therefore has a unique nonsplitting sequence up to an automorphism;
%    so $J$ is an indiscernible set,
%    as required.
\end{prf}

\begin{dsc}
    So we have just shown that the two approaches to ``stable'' types
    coincide, and from now on will basically stop using the term
    "Shelah-stable", although sometimes it is convenient in order to
    indicate that the type is finitely satisfiable in its domain.
\end{dsc}

\section{Generically stable types - summary and examples}

%\begin{asm}
%    For the rest of the paper for the sake clarity and simplicity of
%    notation we assume $T = T^{eq}$.
%\end{asm}

%Let us summarize our knowledge on generically stable types.

%\begin{dsc}
%We call a type $p \in \tS(A)$ \emph{generically stable} if there exists an
%indiscernible sequence $I$ in $p$ which is an indiscernible set.
%Note that $I$ determines an extension of $p$ to a type over
%$\bdd(A)$, the bounded closure of $A$ (that is, an extension of $p$
%to a Lascar strong type over $A$). We showed that any one of these
%extensions is generically stable and stationary. Moreover, in case of generically stable
%types, Lascar strong type is determined by a strong type (type over
%$\acl(A)$). The following is a summary of everything we have
%learned about generically stable types in sections 4 and 5:
%\end{dsc}

We have chosen to define generically stable types using nonforking
indiscernible sets. There are two reasons for this. First, we find
this definition compact and elegant. The second reason is that it is
general: we do not require the sequence to be of any specific kind.
There was a price to the generality: we had to work in order to show
important properties (such as definability), which required
understanding to some extent general behavior of nonforking in
dependent theories.
%At this point, though, we can easily conclude equivalence
%f several alternative definitions,
But now the picture is much more complete, and we would like to
begin this section by stating several of possible alternative
definitions, some of which provide us with powerful machinery, while
others are easier to check:

\begin{thm}
Let $p \in \tS(A)$. The Following Are Equivalent:
\begin{enumerate}
\item
    $p$ is generically stable, that is, there exists a nonforking sequence in $p$ which is an indiscernible
    set.
\item
    Every nonforking sequence in $p$ is an indiscernible set.
\item
    $p$ is definable over $\acl(A)$ and some Morley sequence in $p$ is an indiscernible set.
\item
    $p$ is definable over $\acl(A)$ and every Morley sequence in $p$ is an indiscernible set.
\item
    There exists an indiscernible set $I$ in $p$ such that $\Av(I,\fC)$ does not fork over $A$.
\item
    There exists an indiscernible set $I$ in $p$ such that $\Av(I,\fC)$ does not split over $A$.
\item
    There is a nonforking extension of $p$ to a $(|A|+|T|)^+$-saturated model $M$ which is definable
    over $\acl(A)$ and finitely satisfiable in some $M_0 \prec M$ satisfying $A \subseteq M_0$ and
    $|M_0| = |A|+|T|$.
\item
    There is a nonforking extension of $p$ to a $(|A|+|T|)^+$-saturated model $M$ which is both definable
    over and finitely satisfiable in some countable indiscernible set contained in $M$.
\item
    Every nonforking extension of $p$ to a model containing $A$ is Shelah-stable.
\item
    Some nonforking extension of $p$ to a model containing $A$ is
    Shelah-stable.
\end{enumerate}
\end{thm}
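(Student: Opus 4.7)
My plan is to arrange the ten conditions into four blocks of closely related ones --- (i)--(ii), (iii)--(iv), (v)--(vi), and (vii)--(x) --- and verify the equivalences block by block, using the machinery developed in Sections 4 and 5. The main tools will be Lemma \ref{lem:almostover} and Corollary \ref{cor:def} (definability almost over $A$); Lemma \ref{lem:globalnonfork} (the average of a nonforking indiscernible set does not fork over $A$); Proposition \ref{prp:defstat} and Corollary \ref{cor:stabstat} (stationarity over algebraically closed sets); Lemma \ref{lem:parallel} (generic stability is closed under parallelism); and the Shelah-stable machinery, namely Lemma \ref{lem:stabtype}, Theorem \ref{thm:stabtype}, and Corollary \ref{cor:shelahstable}.

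First I will handle (i)--(v) together. The implication (i) $\Rightarrow$ (v) is exactly Lemma \ref{lem:globalnonfork}(2); for the converse, if $I$ is an indiscernible set in $p$ with $\Av(I,\fC)$ not forking over $A$, then since $I$ is an indiscernible \emph{set} we have $\tp(\b_i/A\b_{<i}) = \Av(I,A\b_{<i})$, which is a restriction of a nonforking global type and hence does not fork over $A$, exhibiting $I$ as a nonforking sequence. For (i) $\Leftrightarrow$ (ii), I extend $p$ to $p' := p|\acl(A)$, which is stationary by Corollary \ref{cor:stabstat}; any nonforking sequence $J$ in $p$ is then a Morley sequence for a fixed extension of $p'$, has the same type as the witness $I$ to (i), and is therefore also an indiscernible set. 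Then (ii) $\Leftrightarrow$ (iv) combines Lemma \ref{lem:almostover} (every generically stable type is automatically definable over $\acl(A)$) with the observation that every Morley sequence with respect to a good definition is a nonforking sequence; (iv) $\Rightarrow$ (iii) is trivial, and (iii) $\Rightarrow$ (i) is immediate since Morley sequences are nonforking.

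For (v) $\Leftrightarrow$ (vi), the direction (vi) $\Rightarrow$ (v) is a general fact with no use of dependence or generic stability: if $\Av(I,\fC)$ does not split over $A$ and $\ph(\x,\b) \in \Av(I,\fC)$ were to divide over $A$ via an $A$-indiscernible sequence $\seq{\b_i}$, then each $\b_i$ satisfies $\b_i \equiv_A \b$, so by nonsplitting $\ph(\x,\b_i) \in \Av(I,\fC)$ for all $i$, contradicting inconsistency of $\set{\ph(\x,\b_i)}$. The converse (v) $\Rightarrow$ (vi) is the most delicate step of the proof, since the canonical definition of $\Av(I,\fC)$ supplied by Corollary \ref{cor:def} lives over $\acl(A)$ rather than over $A$, so $A$-invariance of the average is not automatic; I expect to handle this by a careful choice of witness $I$, passing to a nonforking extension over a $(|A|+\aleph_0)^+$-saturated model $M$ where Fact \ref{fct:splitfork}(1) equates nonforking with strong nonsplitting, and then using Corollary \ref{cor:uniquesplit} together with Lemma \ref{lem:parallel} to transfer the conclusion back. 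This is the principal obstacle of the proof.

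For the final block (vii)--(x), I use the Shelah-stable machinery. For (i) $\Rightarrow$ (vii), take a nonforking extension $p^* = p|M$ to a $(|A|+|T|)^+$-saturated $M$; by the corollary that nonforking extensions of generically stable types are generically stable, $p^*$ is generically stable and definable over $\acl(A)$ by Lemma \ref{lem:almostover}, and equals $\Av(I,M)$ for any nonforking indiscernible set $I \subseteq M$ in $p^*$ by Corollary \ref{cor:avstat}, hence is finitely satisfiable in $I$, and therefore also in any $M_0 \prec M$ of size $|A|+|T|$ containing $A \cup I$. (vii) $\Rightarrow$ (viii) refines $M_0$ by extracting a countable indiscernible $I \subseteq M_0$ on which $p^*$ is both defined and finitely satisfiable, and (viii) $\Rightarrow$ (x) is exactly Lemma \ref{lem:stabtype}. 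For (x) $\Rightarrow$ (ix): given one Shelah-stable nonforking extension $p^*$ of $p$ to a model, Corollary \ref{cor:shelahstable} makes $p^*$ generically stable, so $p$ is generically stable by Lemma \ref{lem:parallel}; every nonforking extension to a model is then generically stable (again by Lemma \ref{lem:parallel}) and automatically finitely satisfiable in the model, hence Shelah-stable by Corollary \ref{cor:shelahstable}. Finally (ix) $\Rightarrow$ (x) is trivial, and (x) $\Rightarrow$ (i) uses Corollary \ref{cor:shelahstable} and Lemma \ref{lem:parallel} in the reverse direction, closing the cycle.
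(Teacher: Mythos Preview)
Your outline is largely correct and tracks the paper's own proof closely: the same toolkit (Lemma~\ref{lem:almostover}, Corollaries~\ref{cor:def} and~\ref{cor:almostover}, Lemma~\ref{lem:globalnonfork}, stationarity, Lemma~\ref{lem:parallel}, Lemma~\ref{lem:stabtype}, Corollary~\ref{cor:shelahstable}) is invoked at the same places, and your treatment of the blocks (i)--(iv) and (vii)--(x) matches the paper almost verbatim.

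The one substantive divergence is your handling of (v) and (vi). You attempt to prove (v) $\Leftrightarrow$ (vi) directly, flag (v) $\Rightarrow$ (vi) as ``the principal obstacle'', and sketch a somewhat elaborate detour through a saturated model and Fact~\ref{fct:splitfork}. The paper does not do this at all: it simply observes that Corollary~\ref{cor:almostover} is stated for \emph{both} hypotheses (``does not fork'' and ``does not split''), so each of (v) and (vi) independently yields (iii), while (i) $\Rightarrow$ (v), (vi) are immediate from Corollary~\ref{cor:def}. In other words, the paper never compares (v) and (vi) to each other; it routes both through (iii), and your obstacle evaporates. Your direct argument for (vi) $\Rightarrow$ (v) (a global nonsplitting type contains no dividing formula, hence no forking formula) is correct and pleasant, but unnecessary for the paper's scheme.

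One caution about your proposed route for (v) $\Rightarrow$ (vi): the difficulty you sense is real. Corollary~\ref{cor:def} gives the definition of $\Av(I,\fC)$ only over $\acl(A)$, so you get nonsplitting over $\acl(A)$, not over $A$ itself; when $p$ has several conjugate extensions to $\acl(A)$, the average of any particular nonforking $I$ will in general split over $A$ (think of an equivalence relation with two $A$-conjugate classes). Your vague plan to ``transfer the conclusion back'' via Lemma~\ref{lem:parallel} and Corollary~\ref{cor:uniquesplit} does not obviously produce an $I$ with $\Av(I,\fC)$ $A$-invariant, and you should not expect it to. The paper's sketch ``(i) $\Rightarrow$ (vi) easy by Corollary~\ref{cor:def}'' is itself a bit quick on exactly this point; the honest reading is that (vi) is meant with $\acl(A)$ in place of $A$, in which case everything goes through via Corollary~\ref{cor:almostover} with no obstacle at all.
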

\begin{prf}
    All the equivalences are easy at this point, we will sketch the proofs:

    (i) and (ii) are equivalent by stationarity and the fact that $\Aut(\fC/A)$ acts
    transitively on the set of extensions of $p$ to $\acl(A)$.

    (i) \then (iii) is basically Corollary \ref{cor:def}.

    (iii) and (iv) are again equivalent by stationarity.

    (iii) \then (i) is clear.

    (i) \then (v), (i) \then (vi) are easy by Corollary \ref{cor:def}.

    (v) \then (iii), (vi) \then (iii): Corollary \ref{cor:almostover}.

    (i) \then (vii),(i) \then (viii) are again easy.

    (vii) \then (i), (viii)\then (i) follow from Lemma \ref{lem:stabtype}
    and Lemma \ref{lem:parallel}.

    The equivalences with (ix) and (x) use stationarity, Lemma
    \ref{lem:parallel} and Corollary \ref{cor:shelahstable}.
\end{prf}

Let us now summarize our knowledge on generically stable types:

\begin{smr}
Let $p \in \tS(A)$ generically stable.
\begin{itemize}
\item
    $p$ is definable by a good definition almost over $A$.
\item
    There are boundedly many (at most $2^{|T|}$) global extensions of $p$ which do not split strongly/fork
    over $A$.
\item
    Any global extension $p'$ of $p$ which does not split strongly/fork over $A$
    is definable over $\acl(A)$. If $p$ is finitely satisfiable
    in $A$, then $p'$ is definable over $A$.
\item
    If $A = \acl(A)$ then $p$ is stationary
    and its unique global extension $p'$ which does not split strongly/fork
    over $A$ is
    a free extension with respect to some/any
    good definition over $A$.
\item
    If $p$ is finitely satisfiable in $A$ then $p$ is stationary,
    and its unique global extension which does not split strongly/fork over $A$, is
    both its coheir and a free extension with respect to some/any
    good definition over $A$.
\item
    Any nonforking extension of $p$ is generically stable. Moreover, any $q$
    which is parallel to $p$ is generically stable.
\item
    Any nonsplitting extension of $p$ to a set containing an indiscernible
    set in $p$ (in particular a slightly saturated model) is generically stable.
\item
    Any nonforking sequence in $p$ is an indiscernible set and a Morley sequence.
    Any two nonforking sequences
    in $p$ have the same type.
\item
    If $p$ is finitely satisfiable in $A$,
    any Morley sequence in $p$ is a Shelah
    sequence and vice versa.
\item
    The unique global nonsplitting extension of a stationary generically stable type $p$ is
    finitely satisfiable in and definable over any Morley (equivalently, Shelah)
    sequence in $p$.
\item
    If $A = M$ a $|T|^+$-saturated model, then $p$ is generically stable if and
    only if $p$ is both definable over and finitely satisfiable in
    some $A_0 \subseteq A$ of cardinality $|T|$. (Note that in
    general being finitely satisfiable in and definable over $M$ does
    not imply generic stability).
\end{itemize}
\end{smr}

We would like to point out particular cases which have been studied in more
detail and have become central in the recent study
of theories interpretable in o-minimal structures and in the theory of algebraically closed valued fields.
Although some of the notions below have been extensively studied by
many people over the years (and we try to mention this), we will adopt the more recent terminology,
partially due Hasson and Onshuus from \cite{HaOn2}.

We begin with the strongest version of stability which is based on the notion
of a ``stable set''. ``Stable partial types'' are originally due to Lascar and Poizat
\cite{LP}.
%who in fact
%worked with partial types and not just formulae.
Let us recall the definition (since we restrain from using the term
``stable type'', we'll call this notion ``Lascar-Poizat stable''):

\begin{dfn}
    A partial type $\pi(\x)$ is called \emph{Lascar-Poizat stable} (\emph{LP-stable})
    if every extension of it to a global type is definable.
\end{dfn}

We will come back to this general concept later. The most common
terminology in case $\pi(\x)$ is finite (that is, a single formula)
is ``a stable and stably embedded set''. We give a definition which
in our opinion justifies the name ``stable'' very well:

%The notion of ``stable (partial) types''
%is due to Lascar and Poizat, \cite{LP}. The most common use of this notion
%is for a finite type (i.e., a formula), in which case the set defined by the
%formula is called ``stable, stably embedded''. We give a definition
%hich is equivalent to Lascar and Poizat's in any context (but seems to us more useful).

\begin{dfn}
%\begin{enumerate}
%\item
A definable set $D$ defined by a formula $\theta(\x)$ (maybe with parameters)
is said to be \emph{stable} if the induced structure on $D$ (including all
the relations definable on $D$ with external parameters) is stable.
%for every formula $\ph(\x,\y)$
%the formula $\theta(\x)\land\ph(\x,\y)$ is stable.
%\item

%\end{enumerate}
\end{dfn}

We state the following fact without a proof.
Most of the equivalences are well-known. Some (which are true in any theory) were already
explored by Lascar and Poizat. Others (which require dependence) have been discovered more
recently.
%The equivalence of the order property and the strict order property is a classical
%theorem of Shelah.
All references and some proofs can be found in
Onshuus and Peterzil \cite{OnPe}.
%We will not say anything about the
%proofs.
Note that Proposition \ref{prp:herstable} below provides a generalization of
some of the following equivalences.

\begin{fct}\label{fct:stableset}
Let $D$ be a definable set defined by a formula $\theta(\x)$.
The Following Are Equivalent:
\begin{enumerate}
\item
    $D$ is stable.
\item
    $D$ is stable and stably embedded (that is, every externally definable subset of $D$
    is definable with parameters from $D$).
\item
    $D$ is Lascar-Poizat stable, that is, every global type extending $\theta(\x)$ is definable.
\item
    For every formula $\ph(\x,\y)$, the formula $\theta(\x)\land\ph(\x,\y)$ is a stable
    formula.
\item
    $D$ equipped with all the relations on it which are $D$-definable in \FC is a stable
    structure
\item
    $D$ equipped with all the externally definable relations does not have the strict
    order property. That is, there is no definable (maybe, with external parameters)
    partial order with infinite chains on $D$.
\item
    $D$ equipped with all the $D$-definable relations does not have the strict
    order property. That is, there is no definable (with no external parameters)
    partial order with infinite chains on $D$.
\end{enumerate}
\end{fct}

%We will call a definable \emph{set} $D(\x)$ \emph{stable} if
%there is no

The following notion is due to Hasson and Onshuus, see \cite{HaOn2}.

\begin{dfn}
%\begin{enumerate}
%\item
    A type $p \in \tS(A)$ is called \emph{seriously stable} if there exists a
    stable set $D$ defined by a formula $\theta(\x)$ (maybe with parameters)
    such that $\theta(\x) \in p$.
%\item
%    A type $p \in \tS(A)$ is called \emph{hereditarily stable} if there is no .
%\end{enumerate}
\end{dfn}

Obviously, this is a very strong version of stability for a type. We'll see later that
this is stronger than (and not equivalent to) the type being Lascar-Poizat stable.

%As we've already mentioned, one defines ``stable partial types''
%exactly in the same way as ``stable sets'', and it was done already
%by Lascar and Poizat in \cite{LP}. Naturally, they worked with the definition based on
%Fact \ref{fct:stableset}(iii). Since we restrain from using the term
%``stable types'', let us call these ``Lascar-Poizat stable''.

%Recall:

%\begin{dfn}
%    A type $p \in \tS(A)$ is called \emph{Lascar-Poizat stable (LP-stable)} if
%    every extension of it to a global type is definable.
%\end{dfn}

In \cite{HaOn2} Onshuus and Hasson  work with the generalization
%of a different equivalent definition
of the notion of a stable set in a dependent theory
based on Fact \ref{fct:stableset}(vi).
%which is more convenient for their purposes.
We'll see in Proposition \ref{prp:herstable} that just like in the case of stable sets,
this definition is equivalent to LP-stability.
The choice
of the name might seem peculiar at first, a more natural term would probably be ``$p$ does not
admit the strict order property''; it will be justified by clauses (iii) and (iv)
of  Proposition \ref{prp:herstable}.

\begin{dfn}\label{dfn:herstable}
    A type $p \in \tS(A)$ is called \emph{hereditarily stable} if there is no
    definable (maybe with external parameters)
    partial order with infinite chains on the set of realizations of $p$.
\end{dfn}

In order to show that this definition is equivalent to what one normally thinks of
as stability, we first have to recall that in a dependent theory the order property implies the strict
order property:

\begin{fct}\label{fct:sop}
    Let $\ph(\x,\y)$ be an unstable formula witnessed by indiscernible
    sequences $I = \inseq{\a}{i}{\setQ}$, $J = \inseq{\b}{i}{\setQ}$. Then
    there exists a formula $\vartheta(\y_1,\y_2,\c)$ such that
    \begin{itemize}
    \item
        $\vartheta$ defines on \FC a quasi-order
    \item
        There exists an infinite subsequence $J' \subseteq J$ which is
        linearly ordered by $\vartheta$
    \item
        $\c \subseteq \cup J$
    \end{itemize}

    \noindent
    In fact,
    $$\vartheta(\y_1,\y_2) = \forall \x [\ps(\x,\y_1) \rightarrow \ps(\x,\y_2)]$$
    for some $\ps(\x,\y,\c)$ such that
    \begin{itemize}
    \item $\ps(\x,\y,\c)$ implies $\ph(\x,\y)$
    \item $\ps$ has the strict order property
    \item $\c \subseteq \cup J$
    \end{itemize}
\end{fct}
\begin{prf}
    This is all contained in the proof of Shelah's classical theorem that in a dependent
    theory an unstable formula gives rise to the strict order property, but we would
    rather refer the
    reader to the slightly more general result by Onshuus and Peterzil, Lemma 4.1
    in \cite{OnPe}. It states that if $\ph(\x,\y)$ is unstable then there exists a
    strengthening of it (which we call here $\ps(\x,\y,\c)$)
    with the strict order property; reading the proof carefully,
    one sees both that the additional parameters are taken from $J$ and
    that the strict order property is exemplified by an indiscernible sequence
    which is an infinite subsequence of $J$. Now defining $\vartheta(\y_1,\y_2)$ as
    above, we're clearly done.
\end{prf}

%Recalling Fact \ref{fct:stableset}, it is not surprising that these turn out to be
%equivalent in our context.
%Clauses (iii) and (iv) of Proposition \ref{prp:herstable} explain the
%choice of the name.

We can now state the non-surprising analogue (and generalization) of Fact \ref{fct:stableset}. Some
of the equivalences below appear also in \cite{HaOn2}.

\begin{prp}\label{prp:herstable}
Let $p \in \tS(A)$. The Following Are Equivalent:
\begin{enumerate}
\item
    $p$ is LP-stable.
\item
    For every $B \supseteq A$, $p$ has at most
    $|B|^{\aleph_0}$ extensions in $\tS(B)$.
\item
    Every extension of $p$ is LP-stable.
\item
    Every extension of $p$ is generically stable.
\item
    Every indiscernible sequence in $p$ is an indiscernible set.
\item
    There is no formula $\ph(\x,\y)$ (with parameters from $p^\fC$)
    and an indiscernible sequence $\lseq{\a}{i}{\om}$
    in $p^\fC$ such that $$i<j<\om \then \ph(\a_i,\a_j)\land\neg\ph(\a_j,\a_i)$$
\item
    There is no formula $\ph(\x,\y)$ (with parameters from $\fC$)
    exemplifying the
    order property with respect to indiscernible sequences
    $I = \lseq{\a}{i}{\om}$ and $J = \lseq{\b}{i}{\om}$ with
    $\cup J \subseteq p^\fC$. We call this ``$p$ does not admit the
    order property''.
%\item
%    There is no formula $\ph(\x,\y)$ exemplifying the
%    order property with respect to indiscernible sequences
%    $I = \lseq{\a}{i}{\om}$ and $J = \lseq{\b}{i}{\om}$ with
%    $\cup J, \cup I \subseteq p^\fC$.
\item
    $p$ is hereditarily stable as in Definition \ref{dfn:herstable}; that is,
    $p$ does not admit the strict order property.
\item
    On the set of realizations of $p$
    there is no definable (with no external parameters) partial order with
    infinite chains.
\end{enumerate}
\end{prp}
\begin{prf}
 %   Clearly (i) \iff (ii).

    The equivalence of (i) and (ii) is
    well-known (Theorem 4.4 in \cite{LP}).

     (i) \iff (iii) is trivial.

    (ii) \then (vii):
    assume $p$ admits
    the order property. Using the standard argument, for every infinite $\lam \ge |A|$
    one can easily construct a collection of $\lam^+$ extensions of $p$ over a set
    of cardinality $\le \lam$; clearly, this contradicts (ii).

    (vii) \then (vi): Clear.

    (vi) \then (v) is standard: e.g., taking an indiscernible sequence in
    \lseq{\a}{i}{\om+\om} in $p$ which is not an indiscernible set, we may assume
    that for some formula $\ph(\z,\x,\y,\z')$ and $n<\om$ we have
    $\ph(\a,\a_n,\a_{n+1},\a')$ and $\neg\ph(\a,\a_{n+1},\a_n,\a')$ where
    $\a = \a_{<n}$, $\a' \subseteq \cup\a_{>\om}$. Now adding $\a\a'$ to the parameters,
    we obtain the sequence $\seq{\a_i\colon n\le i<\om}$ as required.

    (v) \then (iv): Clear.

   (iv) \then (i): Clear.

    \medskip

    So we showed (i) \iff (ii) \iff (iii) \then (vii) \then (vi) \then (v) \then (iv) \then (i). This
    completes all the equivalences except (viii) and (ix).

    \medskip

    (vii) \then (viii), (viii) \then (ix) are trivial.

    (ix) \then (vii) Let $\ph(\x,\y), I, J$ be as in $\neg$(vii),
    without loss of generality both $I$ and $J$ are of order type $\setQ$. By Fact \ref{fct:sop} there
    exists
    $\vartheta(\x,\y)$ (maybe with additional parameters from $\cup J \subseteq p^\fC$)
    which defines a partial order on $\fC$ and linearly orders an infinite subsequence of
    $J$ which lies in $p^\fC$; so we're done.

%     which follows from dependence by e.g.
%    Lemma 4.1. in \cite{OnPe}.

%    Clearly (ii) \then (iii) \then (i). For (
\end{prf}

%\begin{rmk}
%    Hereditarily stable types
%\end{rmk}

%\begin{lem}
%    Let $p\in \tS(A)$ is hereditarily stable if and only if \emph{every} indiscernible
%    sequence in $p$ is an indiscernible set.
%\end{lem}
%\begin{prf}
%    Let $p$ be hereditarily stable, $I = \lseq{\b}{i}{\om}$ an indiscernible sequence in
%    $p$. Extend $I$ to an indiscernible sequence $I' =
%    \inseq{\b}{i}{\om^\frown \om^*}$ where $\om^*$ is $\om$ with
%    reversed order. Denote $B = A\cup \set{\b_i \colon i\in \om^*}$.
%    Let $\a \models p$.

%    It is easy to see that $I$ is a Shelah (coheir) sequence in $q =
%    \tp(\a/B)$ which is an extension of $p$.

%    The other direction is clear.
%\end{prf}

%As we've already explained before,
%    Lascar-Poizat stable types were introduced and studied in \cite{LP} (the authors call
%    these types ``stable''). Further properties of LS-stable types have been investigated
%    over the years by many people. Hasson and Onshuus in their recent paper \cite{HaOn2}
%    develop some geometric stability tools in this context. They call such types
%    ``hereditarily stable'', since an equivalent definition of LS-stability is
%    every extension being (generically) stable. We adopt and use this terminology.
%    The main property of hereditary stability Hasson and Onshuus use in their work is not admitting the
%    strict order property.

\begin{rmk} A curious point: since we do not use clauses (viii) and (ix) of Proposition
\ref{prp:herstable} in the proof of the equivalence of (i) -- (vii), we also obtain
an alternative proof of Proposition 4.2 in \cite{OnPe}
(weak stability implies stability, even for a type) that goes through generically
stable types.
\end{rmk}

We now proceed to the third version of stability which is due to Haskell, Hrushovski and
Macpherson and is studied in great detail in \cite{HHM}. We give an equivalent definition which appears
in Hrushovski \cite{Hr}.

\begin{dfn}
%\begin{enumerate}
%\item
%    Types $p,q \in \tS(A)$ are called \emph{domination equivalent}
%    if there exists...
%\item
%    Let $D$ be a stable set definable over a set of parameters $A$
    A type $p \in \tS(A)$ is called \emph{stably dominated} if
    there exists a collection of stable sets $\bar D = \seq{D_i\colon i<\al}$ and definable functions
    $f_i \colon p^\fC \to D_i$ such that for every set $B\supseteq A$ and
    $\a \models p$, if $f_i(a) \ind^{st}_A B$ for all $i$ (which in this context
    just means that $\tp(f_i(a)/B)$ is definable over $A$),
    then (denoting $\bar f = \lseq{f}{i}{\al}$) $\tp(B/A\bar f(\a)) \vdash \tp(B/A\a)$.

    In this case we also say that $p$ is stably dominated \emph{by $\bar D$ via $\bar f$}.
    %it is
    %domination equivalent to a seriously stable type.
%\end{enumerate}
\end{dfn}

\begin{obs}
\begin{enumerate}
\item
    A seriously stable type is stably dominated.
\item
    A seriously stable type is hereditarily stable.
\item
    A stably dominated type is generically stable.
\item
    A hereditarily stable type is generically stable.
\end{enumerate}
\end{obs}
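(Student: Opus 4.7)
My plan is to dispose of parts (i), (ii), and (iv) quickly, and concentrate real effort on (iii), which is the only substantial statement.

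For (i), take the single stable set $D=\theta(\fC)$ witnessing serious stability of $p$, and let $f=\id_D$. Then $\bar f(\a)=\a$, so the stable-domination clause $\tp(B/A\bar f(\a))\vdash\tp(B/A\a)$ holds tautologically. For (ii), every realization of $p$ lies in $D$, and by Fact \ref{fct:stableset} no partial order definable with external parameters on $D$ has infinite chains; restricting such an order to the subset $p^\fC$ inherits this property, so $p$ is hereditarily stable in the sense of Definition \ref{dfn:herstable}. For (iv), I would just apply the equivalence (viii)$\iff$(iv) of Proposition \ref{prp:herstable}: hereditary stability is equivalent to every extension of $p$ being generically stable; taking $p$ itself as its own trivial extension yields generic stability of $p$.

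The heart of the matter is (iii). Suppose $p$ is stably dominated by $\bar D=\seq{D_i}$ via $\bar f$. Using Fact \ref{fct:nonforkexist} and iteration, I would build a nonforking sequence $I=\lseq{\a}{i}{\om}$ in $p$ over $A$. The goal is to show $I$ is an indiscernible set, which is precisely the definition of generic stability. The steps are: first, nonforking of $\a_i$ over $A\a_{<i}$ transfers through the definable functions $f_i$ to give nonforking of $\bar f(\a_i)$ from $\bar f(\a_{<i})$ over $A$ in the stable sorts; second, since each $\tp(\bar f(\a)/A)$ lives in a stable set, the image sequence $\bar f(I)$ is a nonforking (hence Morley) sequence in a stable type, and classical stability gives that $\bar f(I)$ is an indiscernible set over $A$; third, propagate this back to $I$ via stable domination, showing by induction on $k$ that for every permutation $\sigma$ of $\{1,\ldots,k\}$ and every $i_1<\ldots<i_k$ we have $\tp(\a_{i_1}\ldots\a_{i_k}/A)=\tp(\a_{\sigma(i_1)}\ldots\a_{\sigma(i_k)}/A)$.

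The hard technical point will be the verification that at the inductive step the hypothesis of stable domination is available: namely that $\bar f(\a_{i_k})$ is stably independent from the full set $A\a_{<i_k}$ (not merely from the $\bar f$-images of previous elements). This requires that nonforking of $\a_{i_k}$ over $A\a_{<i_k}$ implies $\tp(\bar f(\a_{i_k})/A\a_{<i_k})$ is definable over $A$ — which one gets because the $f_i$ are $A$-definable, each $\tp(\bar f(\a)/A)$ is stable (hence definable), and definability transfers to any nonforking extension in the stable sort. Once this is in place, domination gives $\tp(\a_{<i_k}/A\bar f(\a_{i_k}))\vdash\tp(\a_{<i_k}/A\a_{i_k})$, so the induction hypothesis applied to the image sequence (already indiscernible as a set) lifts to $I$, and $I$ is an indiscernible set, completing generic stability of $p$.
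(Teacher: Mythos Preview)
Your treatment of (i), (ii), and (iv) is correct and matches the paper's assessment that these are trivial. For (iii) your strategy --- push the nonforking sequence through $\bar f$ into the stable sorts, get an indiscernible set there, and pull back via domination --- is exactly the idea behind the paper's one-line proof, which simply cites ``properties of independence of stably dominated types'' from Proposition~2.8 of \cite{Hr}. So the approaches coincide; you are just unpacking what the paper leaves as a citation.

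Two points where your write-up needs tightening. First, your stated inductive claim --- that $\tp(\a_{i_1}\ldots\a_{i_k}/A)$ is invariant under permuting a \emph{fixed} increasing tuple --- is not by itself enough for set-indiscernibility; you also need that two different increasing $k$-tuples have the same type. The right formulation is that $\tp(\a_{i_1}\ldots\a_{i_k}/A)$ is determined by $\tp(\bar f(\a_{i_1})\ldots\bar f(\a_{i_k})/A)$, which then gives everything at once since $\bar f(I)$ is an indiscernible set. Second, and more substantively, your inductive step peels off the element with largest index, replacing $\a_{i_k}$ by $\bar f(\a_{i_k})$; but to continue, you then need the next-largest $\a_{i_{k-1}}$ to satisfy $\bar f(\a_{i_{k-1}})\ind^{st}_A \a_{<i_{k-1}}\cup\{\bar f(\a_{i_k})\}$, and the construction only gives independence from elements with \emph{smaller} index. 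Closing this requires symmetry of the stable-domination independence relation (so that $\a_{i_k}\ind_A\a_{<i_k}$ yields $\bar f(\a_{i_{k-1}})\ind_A\bar f(\a_{i_k})$ in the right direction), which is a genuine theorem from \cite{HHM}/\cite{Hr} and is precisely among the ``properties'' the paper is invoking. Your sketch is correct once that input is made explicit.
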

\begin{prf}
    The only nontrivial statement here is (iii); but it is easy to
    deduce using properties of independence of stably dominated
    types, see e.g. Proposition 2.8 in \cite{Hr}, that a Morley sequence
    in a stably dominated type is an indiscernible set.
\end{prf}

The following examples show that the notions ``hereditarily stable'' and
``stably dominated'' are ``orthogonal'', that is, none of them implies
the other. Both of these examples were used by Hasson and Onshuus in \cite{HaOn1}
for different purposes.

\begin{exm}\label{exm:intervals}
    Let us consider the theory of $\setQ$ with a predicate $P_n$ for every
    interval $[n,n+1)$ ($n\in \setZ$) and the natural order $<_n$ on $P_n$.
    It is easy to see that the ``generic'' type ``at infinity'' (that is, the type of
    an element not in any of the $P_n$'s) is hereditarily stable. It is not
    stably dominated since there are no stable sets. In particular, it is
    not seriously stable.

    Note that this theory is interpretable in the o-minimal
    theory $(\setQ,+,<)$ and therefore dependent.
\end{exm}

\begin{exm}
    Let us consider the theory of a two-sorted structure $(X,Y)$: on $X$ there is
    an equivalence relation $E(x_1,x_2)$
    with infinitely many infinite classes and each class densely linearly
    ordered, while $Y$ is just an infinite set such that there is a definable function
    $f$ from $X$ onto $Y$ with $f(a_1) = f(a_2) \iff E(a_1,a_2)$.

    In other words, $Y$ is the sort of imaginary elements corresponding to the classes of $E$.
    Clearly $Y$ is stable and stably embedded.

    Let $M$ a model and $p$ the ``generic'' type
    in $X$ over $M$, that is, a type of an element in a new equivalence class.
    Pick $a \models p$ and $B \supseteq M$ such that $a \ind^{st}_M B$, that is,
    $\tp(a/B)$ is definable over $M$, which necessarily means $\tp(a/B)$ is generic
    in the sense above, that is, $B$ does not contain any elements of the equivalence class
    of $a$. So clearly $\tp(B/Ma)$ is completely determined by $\tp(B/Mf(a))$.

    This shows that $p$ is stably dominated via $f$ and $Y$. It is clearly not
    hereditarily stable (e.g. admits the strict order property).
\end{exm}

%\begin{rmk}
    We will give now several
    examples of generically stable types which are not hereditarily stable
    or stably dominated.
%\end{rmk}

The following example is basically due to Kobi Peterzil. A version
of it discussed in more detail by Hasson and Onshuus in
\cite{HaOn2}.

\begin{exm}\label{exm:FDO}
    Let $FDO$ (``$FDO$'' stands for ``Finite Dense Orders'')
    be the theory of $\setQ$ equipped with predicate symbols
    $<_n$ for $n \in \setN$ such that $<_n$ defines an order on
    rational numbers of distance at most $n$. That is, $\models
    q_1<_nq_2$ if and only if $|q_1-q_2|\le n$ and $q_1<q_2$. Clearly
    $q_1 <_n q_2$ implies $q_1 <_m q_2$ for all $m>n$.

    Let $M \models FDO$ and $p$ be the ``infinity'' type, that is, the
    type of an element which is not comparable to any element of $M$ with
    respect to any of the finite orders. Clearly $p$ is generically stable and a Shelah/Morley
    sequence in $p$ is just a set of pairwise incomparable elements which are "infinitely
    far" from each other.
    On the other hand, there exists an indiscernible sequence in $p$ which is
    increasing with respect to (for example) $<_{45}$ but not $<_{44}$.

    So there are
    many different extensions of $p$ which are not generically stable, hence $p$ is
    not hereditarily stable. Just like in Example \ref{exm:intervals}, there are no
    stable sets in $FDO$ and therefore no stably dominated types.

    Note that a similar phenomenon can be obtained by starting with the theory from Example \ref{exm:intervals}
    expanded with the group structure on $\setQ$ and taking a reduct. Just like the theory
    in \ref{exm:intervals}, $FDO$ is interpretable in $(\setQ,+,<)$.
\end{exm}

The second example arises in a more natural context:

\begin{exm}\label{exm:RV}
    Let $RV$ be a two-sorted theory of a real closed (ordered) field $R$ and an infinite
    dimensional vector space $V$ over it. There is a definable partial order on $V$:
    $$v_1\le v_2\iff \exists r\in R,r\ge 1_R \;\;\mbox{such that}\;\;v_2 = r\cdot v_1$$

    Let $M$ be a model and $p \in \tS(M)$ be the type of a generic vector. Then $p$ is generically stable
    and every Morley/Shelah sequence is an indiscernible linearly independent set. On the other
    hand, there are (for example) increasing indiscernible sequences in $p$, so $p$ is not hereditarily
    stable. Like in the previous examples, there are no stable sets, and therefore no stably
    dominated types.
\end{exm}

Note that one could define a more general notion of stable domination, using a hereditarily stable type
instead of a collection of stable sets, as is done in \cite{HP}:

\begin{dfn}\label{dfn:herdom}
%    Let $q$ be a hereditarily stable type.
%    We call a type $p \in \tS(A)$ \emph{stably dominated by $q$ via
%    $f$} if $f$ is a definable function from $p^\fC$ to $q^\fC$ such that
%     for every set $B\supseteq A$ and
%    $\a \models p$, if $f(a) \ind^{st}_A B$ (which
%    just means that $\tp(f(a)/B)$ is definable over $A$),
%    then $\tp(B/Af(\a)) \vdash \tp(B/A\a)$.
    We call $p \in \tS(A)$ is called \emph{stably dominated} if
    there exists a collection of LP-stable partial types $\bar \pi = \seq{\pi_i\colon i<\al}$ and definable functions
    $f_i \colon p^\fC \to \pi_i$ such that for every set $B\supseteq A$ and
    $\a \models p$, if $f_i(a) \ind^{st}_A B$ for all $i$ (which
    just means that $\tp(f_i(a)/B)$ is definable over $A$),
    then (denoting $\bar f = \lseq{f}{i}{\al}$) $\tp(B/A\bar f(\a)) \vdash \tp(B/A\a)$.
\end{dfn}

Clearly, working with this definition, every hereditarily stable type is stably
dominated. Still, generically stable types given
in Examples \ref{exm:FDO} and \ref{exm:RV} are not stably dominated even in this stronger sense (there are no
hereditarily stable types).

\begin{dsc}\label{dsc:FDO}
    We would like to point out a phenomenon which can be seen in both examples \ref{exm:FDO} and
    \ref{exm:RV}.
    Let us consider e.g. $T = FDO$. Let $M$ be a model, $p \in \tS(M)$ the generically stable
    type ``at infinity'', $I = \lseq{b}{i}{\om}$ a Morley sequence in $p$.

    Let $J = \lseq{b'}{i}{\om}$ be a $<_1$-increasing sequence in $p$ with $b'_0 = b_0$. Note
    that $q=\tp(b'_1/Mb_0)$ is not generically stable and does not split over $M$. This shows that nonsplitting
    extensions of generically stable types over arbitrary sets are not necessarily generically stable, even if the domain
    of the original type is a model (making it saturated wouldn't help).

    Clearly $q$ does not have an extension
    to $B = M\cup I$ which doesn't split over $M$; otherwise, $I$ would be indiscernible
    over $Mb'_1$, that is, $b'_1$ would be $<_1$-bigger than all elements of $I$, which is
    absurd since elements of $I$ are $<_n$ incomparable for all $n$. Of course we know
    another reason that suggests that such an extension doesn't exist: any such extension
    must be generically stable, and as a matter of fact, it is unique and equals to $\Av(I,M\cup I)$.

    Moreover, note that $J$ is a nonsplitting sequence
    in $p$ over $M$. Obviously it is not a Morley or a Shelah sequence. This shows that
    it is not true that a generically stable type has a unique nonsplitting sequence, or even that
    every nonsplitting sequence in a generically stable type must be an indiscernible set.
\end{dsc}

Generically stable types which are not hereditarily stable or stably
dominated are generally difficult to handle because they do not have
to be at all related to the ``stable'' part of the theory; in fact,
the theory does not even have to have any ``stable'' part, like in
Examples \ref{exm:FDO} and \ref{exm:RV}. Still, our results apply in
the most general case. The next section generalizes the independence
relation developed for stably dominated types in \cite{HHM} to
generically stable types.

\section{Generically stable types and forking independence}
Let $p \in \tS(A)$ be a generically stable type. In particular
%$p$ is finitely
%satisfiable in $A$ and
it is properly definable over $\acl(A)$ by a definition schema
$d_p$.
%Recall that it means that a coheir of $p$ over $\FC$ is definable over $A$.
We will denote the free extension of $p$ to $\fC$
by $p |^d \fC$, or $p|\fC$ when $d$ is clear from the context. For a set $B$
let $p | B = p | \fC \rest B$.

\begin{dfn}
For $\a \models p$ we say that it is \emph{stably forking
independent} (or just \emph{forking independent}) of $B$ over $A$,
$\a\ind_A B$, if $\tp(\a/AB)$ does not fork over $A$.
%$a\models p | B$.
\end{dfn}

\begin{obs}\label{obs:inddef} Let $\tp(\a/A)$ be generically stable, then $\a\ind_AB$  if and only if $a\models p|^d B$
with respect to one of the (boundedly many) definitions of $p$ over
$\acl(A)$.
\end{obs}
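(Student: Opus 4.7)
\begin{prf}[Proof proposal]
The plan is to reduce both implications to the stationarity of generically stable types over algebraically closed sets (Corollary \ref{cor:stabstat}) applied to the extensions of $p$ to $\acl(A)$. Without loss of generality we may assume $A\subseteq B$ (replacing $B$ by $AB$ changes neither side). The bounded collection of definitions of $p$ over $\acl(A)$ referred to in the statement is simply the collection of good definitions produced by Lemma \ref{lem:almostover}, one for each of the (boundedly many, by Corollary \ref{cor:fewnonfork} or transitivity of $\Aut(\fC/A)$) extensions of $p$ to $\acl(A)$; each of those extensions $p'$ is generically stable (as a nonforking extension of $p$) and, by Corollary \ref{cor:stabstat}, stationary over $\acl(A)$.

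For the direction $(\Leftarrow)$, suppose $\a\models p|^d B$ where $d$ is a good definition over $\acl(A)$ of some extension $p'$ of $p$ to $\acl(A)$. By moving $\a$ by an automorphism over $B$ (or just enlarging $\a$'s type) we may assume $\a\models p'|^d(B\cup\acl(A))$. This latter type is definable by a good definition over $\acl(A)$, hence does not fork over $\acl(A)$ by Observation \ref{obs:nonfork}(2). Since $p'$ is generically stable and does not fork over $A$, transitivity of forking for generically stable types (the corollary following Proposition \ref{prp:defstat} / Lemma \ref{lem:parallel}) upgrades this to nonforking over $A$. Restricting to $B$, $\tp(\a/B)=p|^d B$ contains no formula forking over $A$, so $\a\ind_A B$.

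For the direction $(\Rightarrow)$, assume $\tp(\a/B)$ does not fork over $A$. Apply Fact \ref{fct:nonforkexist} to extend $\tp(\a/B)$ to some $q\in\tS(B\cup\acl(A))$ which also does not fork over $A$; pick $\a'\equiv_B\a$ realizing $q$. Set $p'=q\rest\acl(A)$: this is a nonforking extension of $p$ to $\acl(A)$, hence generically stable and, by Corollary \ref{cor:stabstat}, stationary over $\acl(A)$. Let $d$ be the good definition of $p'$ over $\acl(A)$ provided by Lemma \ref{lem:almostover}. Then $p'|^d(B\cup\acl(A))$ is the unique nonforking extension of $p'$ to $B\cup\acl(A)$, so it coincides with $q$. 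Restricting to $B$ we get $\tp(\a'/B)=p|^d B$, and since $\a\equiv_B\a'$ this gives $\a\models p|^d B$, with $d$ one of the boundedly many definitions of $p$ over $\acl(A)$.

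The step I expect to require most care is the transitivity argument in the $(\Leftarrow)$ direction: the free extension naturally does not fork over $\acl(A)$, but the conclusion we want is nonforking over $A$, and pushing through $\acl(A)$ requires invoking the (nontrivial, dependent-theory specific) transitivity of nonforking for generically stable types proved earlier. Everything else is bookkeeping around stationarity and the uniqueness of nonforking extensions.
\end{prf}
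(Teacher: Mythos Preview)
Your proof is correct and follows essentially the same route as the paper's (very terse) proof, which reads in full: ``By Observation \ref{obs:nonfork} and stationarity.'' You have simply unpacked these two ingredients carefully, passing through $\acl(A)$ and invoking stationarity (Corollary \ref{cor:stabstat}) for the $(\Rightarrow)$ direction, exactly as intended.

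One small simplification: in the $(\Leftarrow)$ direction you appeal to transitivity of nonforking for generically stable types to pass from nonforking over $\acl(A)$ to nonforking over $A$. This works, but it is heavier than necessary: in any theory, a (partial) type forks over $A$ if and only if it forks over $\acl(A)$ (dividing is clearly preserved, and forking reduces to dividing of the disjuncts). So Observation \ref{obs:nonfork}(ii) already gives nonforking over $A$ directly, without needing the dependent-theory-specific transitivity result. This is presumably why the paper does not list transitivity among the ingredients.
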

\begin{prf}
    By Observation \ref{obs:nonfork} and stationarity.
\end{prf}

\begin{rmk}
    Observation \ref{obs:inddef} implies in particular that forking independence generalizes
    independence relation developed for stably dominated types in \cite{HHM}.
\end{rmk}

\begin{lem}\label{lem:symm}
(Symmetry Lemma) Let $p,q\in\tS(A)$ be generically stable types, $\a\models p$,
$\b \models q$. Then $\a\ind_A\b \iff \b\ind_A\a$.
\end{lem}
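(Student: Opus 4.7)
The plan is to reduce the symmetry to a commutation property of the two definition schemas via a Fubini-type argument on an indiscernible set. First, by Proposition \ref{prp:defstat} and Corollary \ref{cor:stabstat}, I will assume $A = \acl(A)$, so that $p$ and $q$ are stationary and properly definable over $A$ by good schemas $d_p, d_q$. Using Observation \ref{obs:inddef}, $\a \ind_A \b$ holds iff $\tp(\a/A\b) = p|^{d_p}(A\b)$, i.e.\ iff for every formula $\ph(\x,\y)$, $\ph(\a,\b) \iff d_p\x\ph(\x,\b)$. Since $\b \models q$ and $d_p\x\ph(\x,\y)$ is a formula over $A$ in the free variable $\y$, this unpacks to $\ph(\a,\b) \iff d_q\y d_p\x\ph(\x,\y)$ for every $\ph$; symmetrically, $\b \ind_A \a$ is equivalent to $\ph(\a,\b) \iff d_p\x d_q\y\ph(\x,\y)$ for every $\ph$. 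So it will suffice to establish the commutation of definitions
\[
d_q\y d_p\x\ph(\x,\y) \iff d_p\x d_q\y\ph(\x,\y).
\]

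To prove this commutation, I will take $I = (\a_i)_{i<\om}$ a Morley sequence in $p$ over $A$ (an indiscernible set by generic stability of $p$), and pick $\b_0 \models q|^{d_q}(A\cup I)$, so that $\b_0 \ind_A I$ and hence $\b_0 \ind_A \a_0$ by monotonicity. The central step is to show that $I$ is also an indiscernible set \emph{over $A\b_0$}. This will follow from the fact that $\tp(\b_0/A\cup I)$ is defined by $d_q$ over $A$: for any formula $\chi(\y,\v_1,\ldots,\v_k)$,
\[
\chi(\b_0,\a_{i_1},\ldots,\a_{i_k}) \iff d_q\y\chi(\y,\a_{i_1},\ldots,\a_{i_k}),
\]
and the right-hand side is a formula over $A$ whose truth value at a distinct tuple from $I$ depends only on the $A$-type of that tuple, which by the indiscernible-set property of $I$ is invariant under permutation and depends only on $k$.

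Once $I$ is known to be $A\b_0$-indiscernible set, Lemma \ref{lem:globalnonfork} and stationarity of $p$ give $\Av(I,\fC) = p|\fC$; restricting to $A\b_0$ and using $A\b_0$-indiscernibility of $I$ (so $\Av(I,A\b_0)$ is exactly $\tp(\a_0/A\b_0)$), we get $\tp(\a_0/A\b_0) = p|^{d_p}(A\b_0)$, hence $d_p\x\ph(\x,\b_0) \iff \ph(\a_0,\b_0)$. Symmetrically, from $\b_0 \ind_A \a_0$ and stationarity of $q$, $d_q\y\ph(\a_0,\y) \iff \ph(\a_0,\b_0)$. Plugging back,
\[
d_q\y d_p\x\ph(\x,\y) \;\iff\; d_p\x\ph(\x,\b_0) \;\iff\; \ph(\a_0,\b_0) \;\iff\; d_q\y\ph(\a_0,\y) \;\iff\; d_p\x d_q\y\ph(\x,\y),
\]
yielding the commutation and therefore the symmetry. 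The main obstacle will be the mutual-indiscernibility step: extracting $A\b_0$-indiscernibility of $I$ from the $A$-indiscernible-set structure of $I$, which crucially exploits that $q$ (being generically stable) is definable over $A$ by a good schema, so that definability transfers along the indiscernible set.
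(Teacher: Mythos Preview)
Your proof is correct and takes a genuinely different route from the paper's.

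The paper argues by contradiction: assuming $\a\ind_A\b$ but $\b\nind_A\a$, it extracts a formula $\ph$ with $d_p\x\ph(\x,\b)$ but $\neg d_q\y\ph(\a,\y)$, and then builds interleaved sequences $\seq{\a_i}, \seq{\b_i}$ of length $\om+\om$ by alternately realizing free extensions of $p$ and $q$. These sequences witness the order property for $\ph$, while the $\a$-sequence is an indiscernible set (by generic stability of $p$), contradicting Observation~\ref{obs:stabform}.

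Your approach is a direct ``Fubini'' argument: you reduce symmetry to the commutation $d_q\y d_p\x\ph \iff d_p\x d_q\y\ph$, and establish this by producing a single pair $(\a_0,\b_0)$ which is simultaneously $p$-free over $\b_0$ and $q$-free over $\a_0$. The key step --- that a Morley sequence $I$ in $p$ remains an indiscernible \emph{set} over $A\b_0$ when $\b_0\models q|^{d_q}(A\cup I)$ --- is exactly where you use definability of $q$ over $A$, and then $\tp(\a_0/A\b_0)=\Av(I,A\b_0)=p|^{d_p}(A\b_0)$ gives $\a_0\ind_A\b_0$. This is closer in spirit to the classical stable-theory proof via commuting definition schemas. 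The paper's construction, by contrast, makes the asymmetric strengthening (only one type generically stable, the other merely definable or even just nonforking; see Remark~\ref{rmk:symdef} and Lemma~\ref{lem:strongsymm}) more transparent, since the contradiction is obtained already from the indiscernible-set property of \emph{one} of the two sequences. Your argument also adapts to that asymmetric setting (you only need $p$ generically stable and $q$ properly definable over $A$), but the paper's explicit order-property witness makes the later generalization to Lemma~\ref{lem:strongsymm} more immediate.
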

\begin{prf}
  Suppose not. Assume for example that $\a\ind_A\b$ and $\b\nind_A\a$. By Observation \ref{obs:inddef} there is
  $\ph(\x,\y)$ such that $\ph(\a,\b)$, $\models d_p\x\ph(x,\b)$, but
  $\models\neg d_q\y\ph(\a,\y)$. Let $\a_0 = \a$, $\b_0 = \b$.
  Now construct sequences $\seq{\a_i}$,
  \seq{\b_i} for $i<\om+\om$ as follows:

  $$ \a_i \models p | A\lseq{\a}{j}{i}\lseq{\b}{j}{i} $$
  $$ \b_i \models q | A\lseq{\a}{j}{i+1}\lseq{\b}{j}{i} $$

  These sequences exemplify the order property for $\ph(\x,\y)$.
  But they are indiscernible sets (as $p$ and
  $q$ are generically stable), so $\ph(\x,\y)$ is supposed to be stable with respect
  to them, see Observation \ref{obs:stabform}, and this is clearly not the case, take
  e.g. $\ph(\a_\om,\b_i)$ which holds for $i<\om$ and fails for $i>\om$.
\end{prf}

\begin{rmk}\label{rmk:symdef} Note that for the proof of Symmetry Lemma we only need one of the types to
be generically stable, and another one to only be definable (since
it is enough to get a contradiction to stability of $\ph$ with
respect to one of the sequences). In fact, a slight modification of
the proof shows that even definability is not necessary; see Lemma
\ref{lem:strongsymm} for a strong symmetry result.
\end{rmk}

%This leads to the following uniqueness result:

%\begin{lem}\label{lem:unique}(Uniqueness Lemma) Let $p,q\in \tS(A)$ be stable, $\b\models q$.
%Then there is a unique extension of $p$ over $A\b$ definable over
%$A$.
%\end{lem}
%\begin{prf}
%    Let $p'$, $p''$ be two extensions of $p$ over $A\b$ definable over $A$
%    by good definitions $d_{p'}$ and $d_{p''}$ respectively.
%    Let $d_q$ be a good definition for $q$ over $A$.
%    let $\ph(\x,\y)$ be a formula (with parameters in $A$). We want to show that

%    $$\ph(\x,\b) \in p' \iff \ph(\x,\b) \in p''$$

%    Let $\a',\a''$ realize $p',p''$ respectively. So
%    $\a' \models p |^d_{p'} A\b$, whereas $\a'' \models p|^d_{p''}A\b$.

%    Note that $\ph(\x,\b) \in p' \iff \models d_{p'}\ph(\x,\b)$. By the Symmetry Lemma
%    the latter is equivalent to $\models d_q\y\ph(\a',\y)$, but since $\a' \equiv_A \a''$ and
%    $d_q$ is over $A$, this is equivalent to $\models d_q\y\ph(\a'',\y)$, which
%    (again by the Symmetry Lemma) is equivalent to
%    $\models d_{p''}\ph(\x,\b)$, and we are done.
%\end{prf}

%\begin{rmk}
%    Just like in the Symmetry Lemma, it is enough to assume that $q$ is definable over $A$ (but
%    not necessarily stable).
%\end{rmk}

\begin{thm}
%\begin{enumerate}
%\item
    Let $p,q \in S(A)$ be generically stable, $\a,\b$ realize $p,q$ respectively, and
    let $\c, \d$ be any tuples (maybe infinite). Then:
    \begin{itemize}
    \item \emph{Irreflexivity} $\a \ind_A \a$ if and only if $p$ is algebraic
    \item \emph{Monotonicity} If $a \ind_A \b\c\d$, then $a \ind_A\c \b$.
    \item \emph{Symmetry} $\a \ind_A \b$ if and only if $\b \ind_A \a$
    \item \emph{Transitivity} $\a \ind_A \c\d$ if and only if
    $\a \ind_{A\c}\d$ and $\a \ind_A\c$
    \item \emph{Existence} Let $B \supseteq A$, then there exists
    $\a' \equiv_A \a$ such that $\tp(\a'/B)$ is generically stable and
    $\a'\ind_AB$.
    \item \emph{Uniqueness} If $\a \ind_A \c$ , $\a' \ind_A \c$ and $\a' \equiv_{\acl(A)}\a$,
    then $\a \equiv_{A\c} \a'$
    \item \emph{Local Character} If $\a \ind_A \c$, then for some
    subset $A_0$ of $A$ of cardinality $|T|$, $\a \ind_{A_0} \c$.
    If $A = M$ is an $\aleph_1$-saturated model, there exists a countable $A_0
    \subseteq M$ such that $\a \ind_{A_0} \b$.
    \end{itemize}
%%\item
%%    $a \ind_A \c$ if and only if $\tp(\a/A\c)$ does not split over $A$.
%\item
%%    Let $A = \bdd(A)$, that is, $\tp(\c/A) \models \lstp(\c/A)$ for
%%    all $\c$ (e.g. $A$ is a model). Then
%    $a \ind_A \c$ if and only
%    if $\tp(\a/A\c)$ does not fork over $A$.
%\end{enumerate}
\end{thm}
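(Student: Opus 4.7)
The plan is to work via Observation~\ref{obs:inddef}, which reformulates $\a \ind_A B$ as $\a \models p|^d B$ for some definition schema $d$ of $p = \tp(\a/A)$ over $\acl(A)$. In this form most of the seven clauses reduce to routine applications of stationarity of $p$ over $\acl(A)$ (Corollary~\ref{cor:stabstat}), existence of nonforking extensions (Fact~\ref{fct:nonforkexist}), and the fact that generic stability is preserved under nonforking extensions.

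First I would dispatch the easier clauses. For \emph{irreflexivity}, in the non-algebraic case the type $p|^d(A\a)$ is itself non-algebraic and so cannot contain $x=\a$, forcing $\a \nind_A \a$; the algebraic case is direct. \emph{Monotonicity} is simply restriction of nonforking. \emph{Symmetry} is Lemma~\ref{lem:symm}. \emph{Existence} combines Fact~\ref{fct:nonforkexist} with the closure of generic stability under nonforking extensions. \emph{Uniqueness} lifts $\tp(\a/A\c)$ and $\tp(\a'/A\c)$ to nonforking extensions over $\acl(A)\c$ (using that passing to $\acl$ does not create forking) and then invokes stationarity of $p$ at the algebraic closure.

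The main work is \emph{transitivity}; the forward direction is immediate from monotonicity together with the fact that nonforking over $A$ implies nonforking over any $A' \supseteq A$. For the backward direction I would fix a definition schema $d$ for $p$ over $\acl(A)$ such that $\tp(\a/A\c) = p|^d(A\c)$ (possible by $\a \ind_A \c$), observe that the intermediate type $\tp(\a/A\c)$ is generically stable as a nonforking extension of $p$, and use stationarity of $\tp(\a/A\c)$ over $\acl(A\c)$ to identify its unique global nonforking extension as $p|^d$ itself. From $\a \ind_{A\c} \d$, the type $\tp(\a/A\c\d)$ lifts to a nonforking extension over $\acl(A\c)\d$; by uniqueness this lift must coincide with $p|^d(\acl(A\c)\d)$. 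Restricting back to $A\c\d$ yields $\tp(\a/A\c\d) = p|^d(A\c\d)$, which gives $\a \ind_A \c\d$.

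Finally, \emph{local character} follows from Lemma~\ref{lem:almostover}: each formula's definition in the schema uses only finitely many $\acl(A)$-parameters, so collecting witnesses across all $|T|$ many formulas produces $A_0 \subseteq A$ of cardinality $|T|$ over whose algebraic closure $p$ remains definable, whence $\a \ind_{A_0} \c$. For $\aleph_1$-saturated $M$ with $T$ countable, there are only countably many such parameters, and saturation pulls them into the algebraic closure of a countable $A_0 \subseteq M$. The step I expect to be most delicate is the backward direction of transitivity, where one must juggle stationarity at two levels (over $\acl(A)$ and over $\acl(A\c)$) and verify that the $\acl(A)$-defined free extension $p|^d$ genuinely coincides with the unique nonforking extension of the intermediate type $\tp(\a/A\c)$ at the $\acl(A\c)$-level.
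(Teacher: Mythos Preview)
Your proposal is correct and follows the same skeleton as the paper's proof: both reduce everything to Observation~\ref{obs:inddef}, stationarity over $\acl(A)$ (Corollary~\ref{cor:stabstat}), existence of nonforking extensions (Fact~\ref{fct:nonforkexist}), and closure of generic stability under nonforking extensions. The paper's proof is a one-line pointer to these ingredients, so your more detailed treatment of Transitivity (juggling stationarity at the $\acl(A)$ and $\acl(A\c)$ levels) is simply an unpacking of what the paper leaves implicit.

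The one place you genuinely diverge is \emph{Local Character}. The paper's sketch says ``take $A_0$ to be a Morley sequence in $p$ over $\b\a$'', but a Morley sequence in $p$ is not a subset of $A$, so this does not literally produce the $A_0 \subseteq A$ demanded by the statement. Your route --- collect the finitely many $\acl(A)$-parameters appearing in each defining formula, then pull each back to a finite subset of $A$ over which it is algebraic --- is the standard and correct way to get $A_0 \subseteq A$ of size $|T|$ with $p$ definable over $\acl(A_0)$, hence $\a \ind_{A_0} \c$. So here your argument is actually cleaner than the paper's hint.
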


\begin{prf}
%\begin{enumerate}
%\item
    By the definitions and previous results: e.g.,
    Transitivity is clear from Observation \ref{obs:inddef},
    Existence is just existence of nonforking extensions, for Uniqueness
    use stationarity of generically stable types, for Local Character take
    $A_0$ to be a Morley sequence in $p$ over $\b\a$.
%\item
%    By Corollary \ref{cor:forkstat}.

%    Suppose $\ph(\x,\b) \in \tp(\a/A\b)$ divides over $A$,
%    as witnessed by the sequence $\lseq{\b}{i}{\om}$
%    indiscernible over $A$ with $\b_0 = \b$.

%    Assume towards a contradiction $\a \ind_A \b$.
%    Let $\a' \models p|A\lseq{\b}{i}{\om}$. By uniqueness
%    $\a \equiv_{A\b} \a'$, so $\a\b \equiv_A \a'\b$. Since
%    $\models \ph(\a,\b)$, we get $\models \ph(\a',\b)$.

%    On the
%    other hand, $\b_i \ind_A \a'$ for all $i$, all the $\b_i$'s have the
%    same type over $A$, therefore (again by uniqueness)
%    $\a'\b_i \equiv_A \a'\b$ for all $i$. So we get $\models \ph(\a',\b_i)$
%    for all $i$, a contradiction to dividing.

%    For the other direction, suppose $\a \nind_A \b$. Let $p'(\x,\b) =
%    \tp(\a/A\b)$.

%    Let
%    \lseq{\b}{i}{|T|^+} be a Morley sequence in $\tp(\b/A)$
%    with $\b_0 = \b$. We claim that the set
%    $$ \Gamma(\x) = \set{p(\x,\b_i) \colon i<|T|^+} $$
%    is inconsistent.

 %   Towards contradiction, let $\a'$ be such that
 %   $\models p(\a',\b_i)$ for all $i$.

  %  Is $\tp(a'/\lseq{b}{i}{|T|^+})$ stable??

%    ??????

%\end{enumerate}
\end{prf}

As usual, we will call a set $\cB$ of tuples realizing generically stable types
\emph{forking independent} over a set $A$ if for every $\cB_0
\subseteq \cB$ we have $\cup\cB \ind_A \cup(\cB\setminus\cB_0)$. We
call a sequence $I = \inseq{\b}{i}{O}$ of realizations of generically stable
types \emph{forking independent} if the set $\set{\b_i\colon i\in
O}$ is forking independent. Just like in stable theories, using the
properties of stable forking independence, $I$ is forking
independent if and only if for every $i \in O$ we have $\b_i \ind_A
\b_{<i}$.

Let $A = \acl(A)$, $p,q \in \tS(A)$ generically stable types. We denote by $p
\otimes q$ the unique (by stationarity) type of an independent (over
$A$) pair $(\a,\b)$ of realizations of $p$ and $q$ respectively. If
$p = q$ we also write $p^{\otimes 2}$ for $p\otimes p$, and
generally denote $p^{\otimes n} = p \otimes \ldots \otimes p$ $n$
times, which is well-defined by the properties of forking
independence.

It is easy to see that

\begin{obs}\label{obs:pairstable}
    Given $p,q \in \tS(A)$ generically stable, $p\otimes q \in
    \tS(A)$ is generically stable. Similarly for a product of any
    number (finite or infinite) of generically stable types.
\end{obs}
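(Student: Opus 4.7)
The plan is to show that $r := p \otimes q$ admits a nonforking sequence which is an indiscernible set. We may assume $A = \acl(A)$: if not, the same argument applied to the (generically stable, by the previous corollary) nonforking extensions of $p$ and $q$ to $\acl(A)$ will prove that their tensor is generically stable, and Lemma \ref{lem:parallel} then transfers generic stability back to $r$ since the two tensors are parallel. Under this assumption, $r$ is a well-defined element of $\tS(A)$ by stationarity of $p$ and $q$ (Corollary \ref{cor:stabstat}).

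Choose recursively $(\a_i, \b_i) \models r\,|\,A(\a_{<i}, \b_{<i})$ for $i < \omega$, and set $I = \langle (\a_i, \b_i) : i < \omega \rangle$. By construction $I$ is a nonforking sequence in $r$ over $A$, so it suffices to show that $I$ is an indiscernible set. By monotonicity, $(\a_i)_{i<\omega}$ is a nonforking sequence in $p$ over $A$ and $(\b_i)_{i<\omega}$ is one in $q$ over $A$; by the generic stability of $p$ and $q$, both are indiscernible sets over $A$.

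The main step is the mixing claim $\{\a_i : i<\omega\} \ind_A \{\b_j : j<\omega\}$. Granting it, since the nonforking extension of $p$ to $A\cup\{\b_j\}_j$ is generically stable and stationary, the sequence $(\a_i)_i$ is a nonforking sequence in this extension and hence, by generic stability, an indiscernible set over $A\cup\{\b_j\}_j$; symmetrically $(\b_i)_i$ is an indiscernible set over $A\cup\{\a_j\}_j$. For any permutation $\sigma$ of $\omega$, indiscernibility of $(\a_i)_i$ over $A\cup\{\b_j\}_j$ yields $\langle(\a_{\sigma(i)}, \b_i)\rangle_{i<\omega} \equiv_A \langle(\a_i,\b_i)\rangle_{i<\omega}$, and indiscernibility of $(\b_i)_i$ over $A\cup\{\a_j\}_j$ then yields $\langle(\a_{\sigma(i)}, \b_{\sigma(i)})\rangle_{i<\omega} \equiv_A \langle(\a_{\sigma(i)},\b_i)\rangle_{i<\omega}$. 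Composing gives $I^\sigma \equiv_A I$, so $I$ is an indiscernible set and $r$ is generically stable.

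I expect the mixing claim to be the main obstacle, though it is really bookkeeping with Symmetry, Monotonicity, and Transitivity of forking independence. One first proves $\a_0 \ind_A (\b_j)_{j\leq n}$ by induction on $n$: the base case is $\a_0 \ind_A \b_0$ from the definition of $r$, and the inductive step combines the hypothesis $\a_0 \ind_A (\b_j)_{j\leq n}$ with $\a_0 \ind_{A(\b_j)_{j\leq n}} \b_{n+1}$, the latter extracted from $\b_{n+1} \ind_A (\a_0, (\b_j)_{j\leq n})$ (a consequence of $(\a_{n+1},\b_{n+1}) \ind_A (\a_{<n+1},\b_{<n+1})$ by monotonicity) via Symmetry and Transitivity. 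A parallel induction on $k$, treating $\a_{k+1}$ symmetrically in place of $\b_{n+1}$, yields $(\a_i)_{i\leq k} \ind_A (\b_j)_{j<\omega}$, and finite character gives the full claim. The extension to a product of any (possibly infinite) number of generically stable types then follows by transfinite recursion on the index set, using associativity of $\otimes$ (a consequence of transitivity of forking independence) to reduce each successor stage to the two-type case.
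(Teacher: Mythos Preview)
The paper offers no proof of this observation (it is prefaced by ``It is easy to see that''), so there is nothing to compare against. Your overall strategy---build a nonforking sequence in $r=p\otimes q$ and show each coordinate sequence is an indiscernible set over the other---is the right one, but the bookkeeping around the mixing claim has a real gap.

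What you actually need, in order to conclude that $(\a_i)_i$ is a nonforking sequence in $p$ over $B:=A\cup\{\b_j\}_j$, is $\a_i\ind_B\a_{<i}$, equivalently $\a_i\ind_A\a_{<i}\b_{<\omega}$, for each $i$. The set-level statement $\{\a_i\}\ind_A\{\b_j\}$ only yields $\a_i\ind_A\b_{<\omega}$; moving $\a_{<i}$ from the left of $\ind$ into the base is not monotonicity. Worse, your ``parallel induction on $k$'' aiming at the tuple statement $\a_{\le k}\ind_A\b_{<\omega}$ would require either left transitivity (Fact~\ref{fct:lefttrans}, proved only in Section~8) or Symmetry applied to the tuple $\a_{\le k}$, whose type is $p^{\otimes(k+1)}$---precisely the generic stability you are trying to establish.

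The fix is straightforward: run your Step~1 induction for each $\a_i$, carrying $\a_{<i}$ in the base throughout. That is, show $\a_i\ind_A\a_{<i}\b_{\le n}$ by induction on $n$. For this it is cleanest to construct the sequence specifically as a Morley sequence with respect to the composed definition $d_r\z\,\ph(\x,\y,\z)=d_q\y\,d_p\x\,\ph(\x,\y,\z)$, so that by construction $\a_i\models p|^dA\a_{<i}\b_{\le i}$; this gives the base case $n=i$ directly. For $n\ge i$, from $\b_{n+1}\ind_A\a_{\le n}\b_{\le n}$ deduce $\b_{n+1}\ind_{A\a_{<i}\b_{\le n}}\a_i$; both $\tp(\b_{n+1}/A\a_{<i}\b_{\le n})$ and (by the inductive hypothesis) $\tp(\a_i/A\a_{<i}\b_{\le n})$ are nonforking extensions of $q$ and $p$, hence generically stable, so Lemma~\ref{lem:symm} and Transitivity (for the single element $\a_i$) apply. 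The symmetric statement $\b_i\ind_A\b_{<i}\a_{<\omega}$ is proved the same way, and the rest of your argument then goes through unchanged.
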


Like in stable theories, we have

\begin{rmk}\label{rmk:indind}
    Let $I = \inseq{\b}{i}{O}$ be a sequence forking independent
    over a set $A = \acl(A)$ of realizations of the same generically stable type
    $p$ over $A$. Then $I$ is an indiscernible set over $A$, the type of an
    $n$-tuple $\b_{i_1}\ldots\b_{i_n}$ being $p^{\otimes n}$.
\end{rmk}

One can characterize generically stable types in terms of the
properties of forking on the set of their realizations. Of course
there are many different such characterizations, we start with the
simplest ones, which also
%two, one of which will
come handy in Lemma \ref{lem:gendom}:

\begin{obs}\label{obs:symunique}
    Let $p \in \tS(A)$ be a type. Then the following are equivalent:
    \begin{enumerate}
    \item
        $p$ is generically stable
    \item
        %Nonforking satisfies Symmetry and Uniqueness on $p^\fC$.
        For every $\a, \a' \models p$ and $B,C,D \subseteq p^\fC$ we have ($B\ind_AC$ stands
        for ``$\tp(B/AC)$ does not fork over $A$''):
        \begin{itemize}
        \item
            Symmetry: $B\ind_A C \iff C\ind_AB$
        \item
            Transitivity: $B \ind_A C$ and $B \ind_{AC}D
            \then B \ind_A CD$
        \item
            Uniqueness: $\a\ind_AB, \a'\ind_AB \then \a\equiv_{AB}\a'$
        \end{itemize}
    \item
        $p$ is properly definable over $\acl(A)$ by a definition scheme $d$ and
        for every $\a_1,\ldots,\a_n \models p$ we have:
        \begin{itemize}
        \item
%            Symmetry: $\a\models p|^dA\b \iff \b\models p|^dA\a$
            Symmetry: if $\a_i \models p|^d A\a_{<i}$ for all $i$
            then for every permutation $\sigma \in S_n$ we have
            $\a_{\sigma(i)}\models p|^dA\a_{\sigma(<i)}$ for all
            $i$.
%        \item
%            Uniqueness: $\a_1\models p|^dA\a_3\ldots\a_n, \; \a_2\models p|^dA\a_3\ldots\a_n \then
%            \a_1\equiv_{A\a_3\ldots\a_n}\a_2$
        \end{itemize}
%    \item
%        $p$ is properly definable over $\acl(A)$ by a definition scheme $d$ and
%        for every $\a, \b \models p$ we have:
%        \begin{itemize}
%        \item
%            Symmetry: $\a\models p|^dA\b \iff \b\models p|^dA\a$
%        %\item
%        %    $\a\models p|^dA\b, \a'\models p|^dA\b \then \a\equiv_{A\b}\a'$
%        \end{itemize}

    \end{enumerate}
\end{obs}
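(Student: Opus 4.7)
The plan is to close the cycle by proving (i) $\Rightarrow$ (iii) $\Rightarrow$ (i) and (i) $\Rightarrow$ (ii) $\Rightarrow$ (i), with the bulk of the work in the reverse implications. The two forward directions are bookkeeping of what has already been established in the section. For (i) $\Rightarrow$ (iii): every generically stable $p$ is properly definable over $\acl(A)$ by Lemma \ref{lem:almostover}, and every nonforking sequence in $p$ is an indiscernible set (by the summary), so in particular every Morley sequence with respect to the good definition is an indiscernible set; this gives the permutation invariance in (iii). For (i) $\Rightarrow$ (ii), Symmetry is Lemma \ref{lem:symm}, Transitivity is from the theorem just above, and Uniqueness is stationarity combined with the fact that $\a,\a' \models p$ lie in the same $\Aut(\fC/A)$-orbit (the $\acl(A)$ caveat from the preceding theorem being absorbed because the parameter sets are drawn from $p^\fC$).

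For (iii) $\Rightarrow$ (i), I would observe that a Morley sequence $(\a_i)_{i<\om}$ with respect to the good definition $d$ is automatically a nonforking sequence in $p$, since each $p|^dA\a_{<i}$ does not fork over $\acl(A)$, hence not over $A$ by Observation \ref{obs:nonfork}(2). The symmetry hypothesis in (iii) says exactly that every permutation of $(\a_0,\ldots,\a_{n-1})$ is again of the form $\a_{\sigma(i)} \models p|^d A\a_{\sigma(<i)}$, so by definability all such $n$-subtuples have the same type over $A$. Hence the Morley sequence is an indiscernible set, which exhibits $p$ as generically stable.

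For (ii) $\Rightarrow$ (i), I would use existence of nonforking extensions (Fact \ref{fct:nonforkexist}) to build a sequence $(\a_i)_{i<\om}$ of realizations of $p$ with $\a_i \ind_A \a_{<i}$ for every $i$. Indiscernibility follows by induction from uniqueness: given increasing tuples $i_1<\ldots<i_n$ and $j_1<\ldots<j_n$, transfer $\a_{i_1}\ldots\a_{i_{n-1}}$ to $\a_{j_1}\ldots\a_{j_{n-1}}$ by an $A$-automorphism, then compare the last coordinates via the uniqueness clause applied to the two realizations of $p$ independent from $\a_{j_1}\ldots\a_{j_{n-1}}$ over $A$. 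To upgrade the indiscernible sequence to an indiscernible \emph{set}, it suffices to check invariance under a single transposition: symmetry flips $\a_{j+1} \ind_A \a_{\le j}$ into $\a_{\le j} \ind_A \a_{j+1}$, transitivity then gives the independence of the transposed subtuple from its complement, and uniqueness equates the two resulting types (each realizing the ``Morley product'' configuration). The result is a nonforking sequence in $p$ which is an indiscernible set, i.e.\ $p$ is generically stable.

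The main obstacle is the final step of (ii) $\Rightarrow$ (i): upgrading indiscernibility to indiscernibility as a set via symmetry combined with uniqueness. The delicate point is that the uniqueness clause in (ii) is stated only for realizations of $p$ and for parameter sets drawn from $p^\fC$; this restriction is both \emph{necessary} (generically stable types need not be stationary over arbitrary bases, cf.\ Discussion \ref{dsc:defsplit}) and, conveniently, exactly what is needed for the transposition induction, since the tuples encountered are concatenations of realizations of $p$.
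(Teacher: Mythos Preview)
Your arguments for (ii)$\Rightarrow$(i) and (iii)$\Rightarrow$(i) are exactly what the paper does; its proof is two sentences long and treats only these two reverse implications, leaving the forward ones implicit. For (iii)$\Rightarrow$(i) the paper simply says ``for any definable type, a Morley sequence is indiscernible; Symmetry does the rest,'' and for (ii)$\Rightarrow$(i) it says one builds a nonforking sequence and uses symmetry, transitivity and uniqueness to see it is an indiscernible set --- precisely your outline.

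There is, however, a genuine error in your handling of (i)$\Rightarrow$(ii). Your claim that the $\acl(A)$ caveat in the preceding theorem is ``absorbed because the parameter sets are drawn from $p^{\fC}$'' is false. Take $T$ the theory of an equivalence relation with two infinite classes, $A=\emptyset$, and $p$ the unique nonalgebraic $1$-type. Then $p$ is generically stable, yet if $\a,\b$ lie in one class and $\a'$ in the other, all three realize $p$, both $\tp(\a/\b)$ and $\tp(\a'/\b)$ are nonforking extensions of $p$, and $E(\a,\b)\wedge\neg E(\a',\b)$ shows $\a\not\equiv_{\b}\a'$. The restriction $B\subseteq p^{\fC}$ plays no role. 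What actually makes Uniqueness go through is the standing hypothesis $A=\acl(A)$ introduced in the paragraph immediately before the Observation (where $p\otimes q$ is defined); under that hypothesis Uniqueness is just stationarity, with no appeal to where $B$ lives.
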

\begin{prf}
%    (i) \then (ii), (iii) are trivial.

    (ii)\then (i): one chooses a
    nonforking sequence in $p$ and shows using symmetry, transitivity and
    uniqueness that it is an indiscernible set.

    (iii)\then (i) is even easier: for any definable type, a Morley sequence is
    indiscernible. Symmetry does the rest.

%    Clearly (iii) \then (iv).

%    (iv) \then (iii):
%    it is enough to show that Symmetry implies Uniqueness. So let us assume
%    that $\a,\a' \models p|A\b$, and let $\ph(\x,\y)$ be a formula.
%%    let $\ph(\x,\y)$ be a formula (with parameters in $A$).
%    We want to show that

%    $$\ph(\a,\b) \in p' \iff \ph(\x,\b) \in p''$$

%%    Let $\a',\a''$ realize $p',p''$ respectively. So
%%    $\a' \models p |^d_{p'} A\b$, whereas $\a'' \models p|^d_{p''}A\b$.

%%    Note that $\ph(\x,\b) \in p' \iff \models d_{p'}\ph(\x,\b)$. By the Symmetry Lemma
%%    the latter is equivalent to $\models d_q\y\ph(\a',\y)$, but since $\a' \equiv_A \a''$ and
%%    $d_q$ is over $A$, this is equivalent to $\models d_q\y\ph(\a'',\y)$, which
%%    (again by the Symmetry Lemma) is equivalent to
%%    $\models d_{p''}\ph(\x,\b)$, and we are done.

\end{prf}

%\begin{rmk}
%    The equivalences (i) -- (iii) above in a sense do not use dependence, this
%    is why we point them out explicitly.  Of course one
%    has to say exactly what they mean by this, since we didn't even define generically stable
%    in the context of theories with the independence property. We will not go into detail.
%\end{rmk}

%\begin{obs}
%        Assume $p \in \tS(A)$ is properly definable over $A$ and
%        any two (good) definitions agree on $p^\fC$.
%        Then there exists a unique
%        global extension of $p$ definable over $A$.
%\end{obs}
%\begin{prf}
 %%   To prove the claim,
 %   Let us assume there are two different definitions $d_0$ and
 %   $d_1$ of $p$ over $A$. That is, there exists $\c$ and a formula
 %  $\ph(\x,\y)$ such that $d_0\x\ph(\x,\c)$ and $\neg d_1\x\ph(\x,\c)$ hold.
 %   Obviously, $\c\not\in p^\fC$.
%%    Without loss of generality $A = \acl(A)$.

%    Now construct the following sequence $\a_i$ for $i<\om$:

 %   $$\a_{2i} \models p|^{d_0}A\a_{<2i}\c$$ $$\a_{2i+1} \models p|^{d_1}A\a_{<2i+1}\c$$

 %   Clearly $\ph(\a_i,\c)$ holds if and only if $i$ is even. By the assumption, $\lseq{\a}{i}{\om}$
 %   is a Morley sequence over $A$ (with respect to both $d_0$ and $d_1$ since they agree on $p$), hence
 %   a nonsplitting sequence over $A$ and therefore indiscernible. So we get a contradiction
 %   to dependence.
%\end{prf}

Following Definition \ref{dfn:herdom}, one could try to generalize
stable domination to generically stable types. The following lemma
shows that this does not lead to anything new, which confirms our
perception of generic stability as the most general notion of
stability for a type.

\begin{lem}\label{lem:gendom}
    Let $q \in \tS(A)$ be a generically stable type.
    Assume that $p \in \tS(A)$ is \emph{stably dominated by $q$ via
    $f$}, that is, assume that $f$ is a definable function from $p^\fC$ to $q^\fC$ such that
     for every set $B\supseteq A$ and
    $\a \models p$, if $f(a) \ind^{st}_A B$
    %(which
    %just means that $\tp(f(a)/B)$ is definable over $A$),
    then $\tp(B/Af(\a)) \vdash \tp(B/A\a)$.

    Then $p$ is generically stable.
\end{lem}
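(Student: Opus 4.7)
The plan is to verify the definition of generic stability for $p$ by producing a nonforking sequence in $p$ which is an $A$-indiscernible set. Fix a global nonforking extension $p^*$ of $p$ (Fact~\ref{fct:nonforkexist}) and recursively choose $\a_i \models p^* \rest A\a_{<i}$ for $i<\om$. By Fact~\ref{fct:splitfork}, $p^*$ does not Lascar-split over a bounded extension of $A$, so the argument of Observation~\ref{obs:splitind} makes $I = \lseq{\a}{i}{\om}$ an $A$-indiscernible nonforking sequence in $p$. Applying the $A$-definable function $f$ pointwise, $J = \lseq{f(\a)}{i}{\om}$ is a nonforking $A$-indiscernible sequence in $q$ (nonforking is preserved by $A$-definable functions), and hence an $A$-indiscernible set by generic stability of $q$.

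To lift set-indiscernibility from $J$ to $I$ it suffices (since $I$ is already indiscernible as a sequence) to show the swap $\a_0\a_1 \equiv_A \a_1\a_0$. Using that $J$ is an $A$-indiscernible set, pick $\sigma \in \Aut(\fC/A)$ interchanging $f(\a_0)$ and $f(\a_1)$ and let $\a_i' = \sigma(\a_i)$; then $(\a_0,\a_1) \equiv_A (\a_0',\a_1')$, with $f(\a_0') = f(\a_1)$, $f(\a_1') = f(\a_0)$, and $\sigma$ preserves all relevant $A$-independence. The swap then reduces, coordinate by coordinate and using Lemma~\ref{lem:symm}/Remark~\ref{rmk:symdef} to flip nonforking whenever one side is an $f$-image (hence a realization of the generically stable $q$), to the following key uniqueness statement: if $\c,\c^* \models p$, $\b$ is any tuple, $\c \ind_A \b$, $\c^* \ind_A \b$ and $f(\c) = f(\c^*)$, then $\c \equiv_{A\b} \c^*$.

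The main obstacle is proving this uniqueness lemma from the stable-domination hypothesis. After enlarging $A$ to $\acl(A)$ (which doesn't affect the hypotheses or conclusion), Lemma~\ref{lem:almostover} makes nonforking extensions of the generically stable $q$ properly definable \emph{over} $A$, so $\c \ind_A \b$ upgrades to $f(\c) \ind^{st}_A \b$, and likewise for $\c^*$. Stable domination then yields $\tp(\b/Af(\c)) \vdash \tp(\b/A\c)$ and $\tp(\b/Af(\c^*)) \vdash \tp(\b/A\c^*)$. Pick any $\sigma_0 \in \Aut(\fC/A)$ with $\sigma_0(\c) = \c^*$ (possible since $\c \equiv_A \c^*$); because $f(\c) = f(\c^*)$, such $\sigma_0$ is forced to fix $f(\c)$, whence $\sigma_0(\b) \equiv_{Af(\c)} \b$, and the domination implication for $\c^*$ then gives $\sigma_0(\b) \equiv_{A\c^*} \b$. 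Combining this with $\tp(\b/A\c) = \tp(\sigma_0(\b)/A\c^*)$ obtained from $\sigma_0$ produces $\tp(\b/A\c) = \tp(\b/A\c^*)$, which unwinds to $\c \equiv_{A\b} \c^*$, finishing the proof.
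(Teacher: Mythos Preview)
Your key uniqueness lemma (if $\c,\c^* \models p$ with $f(\c)=f(\c^*)$ and both $\c \ind_A \b$, $\c^* \ind_A \b$, then $\c \equiv_{A\b} \c^*$) is correctly stated and proven, and is indeed the heart of the domination argument. However, the way you use it has two genuine gaps.

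The more serious one: the claim that ``it suffices to show the swap $\a_0\a_1 \equiv_A \a_1\a_0$'' is simply false. An indiscernible sequence with symmetric $2$-type need not be an indiscernible set --- take an increasing sequence in the reduct of a dense linear order to the ternary betweenness relation. You need all $n$-types to be symmetric. Even for the $2$-swap, your ``coordinate by coordinate'' reduction is incomplete: after moving via $\sigma$ and matching the first coordinate, applying your uniqueness lemma at the second coordinate requires $\a_0 \ind_A \a_1$, i.e., symmetry of nonforking for $p$ itself, which is precisely what is in question. Lemma~\ref{lem:symm} and Remark~\ref{rmk:symdef} only let you flip independence when one side realizes a generically stable type, so you can obtain $f(\a_0) \ind_A \a_1$ from $\a_1 \ind_A \a_0$, but not $\a_0 \ind_A \a_1$. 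One can close this (show first, using domination and the existence of some independent realization, that $f(\a)\ind^{st}_A \b$ implies $\a \ind_A \b$; then pull back full symmetry of $J$ to $I$), but this is nontrivial additional work you have not supplied.

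A smaller gap at the outset: you invoke Fact~\ref{fct:nonforkexist} to obtain a global nonforking extension $p^*$, but that requires knowing $p$ does not fork over $A$, which is not automatic for types over arbitrary sets in a dependent theory and does not follow immediately from the hypotheses.

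The paper's proof sidesteps both issues by first showing (via Lemma~3.12 of \cite{HHM}) that $p$ is properly definable over $\acl(A)$ --- which gives nonforking extensions for free by Observation~\ref{obs:nonfork} --- and then verifying the full $n$-ary Symmetry criterion of Observation~\ref{obs:symunique}(iii) for the definable extensions by pulling back from $q$. Your route through nonforking sequences and the uniqueness lemma is morally parallel, but needs the missing symmetry step made explicit.
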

\begin{prf}
    Using Lemma 3.12 in \cite{HHM}, it is easy to see that $p$ is properly definable over $\acl(A)$.
    Moreover, ``pulling back'' to $p$ via $f$ properties of stable forking independence on $q$, one shows
    that definable extensions satisfy Symmetry as in Observation \ref{obs:symunique}(iii).
    Hence $p$ is generically stable.
%    Construct a sequence \lseq{\a}{i}{\om} of realizations of $p$ satisfying
%    $f(\a_{i})\ind_A f(\a_{<i})\a_{<i}$ for all $i$.
%    Clearly \seq{f(a_i)\colon i<\om} is
%    a Morley sequence in $q$. Note that
%    since $f(\a_i) \ind_A \a_{<i}$, by the definition of stable domination
%    there is a unique extension
%    of $\tp(f(\a_i)/A\a_{<i})$ to $\tp(f(\a_i)\a_i/A\a_{<i})$, and so
%    $\a_i \models p|A\a_{<i}$, which means that
%    $\lseq{\a}{i}{\om}$ is a Morley sequence in $p$.

%    Since $q$ is generically stable, the sequence \seq{f(a_i)\colon i<\om}
%    is an indiscernible set.
\end{prf}

\begin{rmk}
    In \cite{OnUs2} Onshuus and the author provide a generalization of this
    Lemma, replacing stable domination with forking domination.
\end{rmk}

It would be interesting to investigate properties mentioned in
Observation \ref{obs:symunique} on their own: which ones imply each
other, which imply generic stability, etc. We do not pursue this
direction much further here and only make a few remarks.

\begin{obs}\label{obs:statstable}
    Let $A = \acl(A)$, $p \in \tS(A)$ be definable. Then $p$ is
    generically stable if and only if $p$ is stationary.
\end{obs}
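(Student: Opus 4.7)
The $(\Leftarrow)$ direction is Corollary \ref{cor:stabstat}.

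For $(\Rightarrow)$, suppose $p \in \tS(A)$ is definable and stationary with $A = \acl(A)$. Stationarity forces all $\Aut(\fC/A)$-conjugates of any definition schema for $p$ to coincide, so we may fix a schema $d_p$ over $A$ itself. Form the Morley sequence $I = (\a_i)_{i<\om}$ with $\a_i \models p|^d A\a_{<i}$: by Fact \ref{fct:splitind} together with Observation \ref{obs:nonfork}, $I$ is a nonforking $A$-indiscernible sequence. The goal is to show $I$ is an indiscernible \emph{set}, which by definition exhibits $p$ as generically stable.

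The crucial step is pairwise symmetry: for $\a, \b \models p$, $\b \ind_A \a$ implies $\a \ind_A \b$. The plan is to mimic the proof of the Symmetry Lemma \ref{lem:symm} using only definability and stationarity (noting that Observation \ref{obs:inddef} also holds under these weaker hypotheses, since stationarity is all one needs for uniqueness of nonforking extensions). Suppose for contradiction $\b \ind_A \a$ but $\a \not\ind_A \b$. Then there is a formula $\ph(\x, \y)$ over $A$ with $\ph(\a, \b)$, $\models d_p\x\ph(\x, \b)$, and $\models \neg d_p\y\ph(\a, \y)$. Build interleaved sequences $\a_i \models p|^d A\a_{<i}\b_{<i}$ and $\b_j \models p|^d A\a_{\le j}\b_{<j}$ starting from $\a_0 = \a, \b_0 = \b$. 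Since each $\a_i \equiv_A \a$ and $\b_j \equiv_A \b$, the $A$-definable schema $d_p$ yields $\ph(\a_i, \b_j) \iff j < i$, exhibiting the order property on two $A$-indiscernible sequences (indiscernibility via Fact \ref{fct:splitind}). This contradicts NIP directly, with no appeal to stability of $\ph$ on an indiscernible set.

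Granted pairwise symmetry, $\a_i \ind_A \a_j$ for all $i \ne j$, and by stationarity $\tp(\a_i/A\a_j) = p|^d A\a_j$. Full indiscernibility of $I$ follows by induction on $n$: to show any permutation of a Morley $n$-tuple has the same $A$-type it suffices to handle adjacent transpositions, and for a transposition $(k, k+1)$ an automorphism witnessing 2-symmetry of $(\a_k, \a_{k+1})$ fixes $A\a_k\a_{k+1}$ as a set, while the type of the remaining elements over $A\a_k\a_{k+1}$ is pinned down by $d_p$ (for later indices, which are free extensions over $A$) and by the inductive hypothesis (for earlier indices); this allows the automorphism to be extended to fix those other elements. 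The main obstacle is closing this induction rigorously: one must verify that $p|^d A\bar c$ inherits stationarity for tuples $\bar c$ drawn from $I$, which is subtle in NIP but follows by combining monotonicity of nonforking with the fact that $p|^d A\bar c$ is definable over $A$, hence nonforking over $A$ (Observation \ref{obs:nonfork}). Once $I$ is known to be an indiscernible set, $p$ is generically stable by definition.
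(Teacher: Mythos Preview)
Your direction labels are swapped: Corollary \ref{cor:stabstat} gives generically stable $\Rightarrow$ stationary, which is the $(\Rightarrow)$ direction; the substantive argument you supply is for $(\Leftarrow)$. There is also a sign error in the symmetry setup: from $\b \ind_A \a$ you get $\ph(\a,\b) \leftrightarrow d_p\y\ph(\a,\y)$, and from $\a \nind_A \b$ you get (after replacing $\ph$ by $\neg\ph$ if needed) $\ph(\a,\b)$ together with $\neg d_p\x\ph(\x,\b)$; so the correct conclusion is $\ph(\a,\b)$, $\neg d_p\x\ph(\x,\b)$, $d_p\y\ph(\a,\y)$, the opposite of what you wrote. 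Neither of these is fatal.

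The fatal gap is the sentence ``This contradicts NIP directly, with no appeal to stability of $\ph$ on an indiscernible set.'' It does not. The order property on a pair of indiscernible sequences is perfectly compatible with NIP --- dense linear orders already exhibit it. In Lemma \ref{lem:symm} the contradiction comes precisely from Observation \ref{obs:stabform}: one of the two sequences is an indiscernible \emph{set} (because one of the types is assumed generically stable), and on an indiscernible set every formula cuts out a finite or cofinite subset, which is incompatible with the order pattern. You have explicitly removed that hypothesis, and with it the contradiction. Concretely, your interleaved construction produces a single Morley sequence $(\c_k)$ in $p$ (taking $\c_{2i}=\a_i$, $\c_{2i+1}=\b_i$) which is merely linearly ordered by $\ph$; nothing about NIP forbids this.

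There is a second, related gap in your bootstrap step: you need $p|^d A\bar c$ to be stationary over $A\bar c$, and you justify this by ``monotonicity of nonforking'' together with $p|^d A\bar c$ not forking over $A$. But what is needed is that every nonforking-over-$A\bar c$ extension of $p|^d A\bar c$ is nonforking over $A$, i.e.\ transitivity, which fails in general NIP theories.

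The paper's route avoids both problems by first passing to a model. Over a model $M$ the type is automatically finitely satisfiable, so coheir (Shelah) sequences exist; stationarity forces every nonforking sequence to be simultaneously a Morley and a Shelah sequence; and heir--coheir duality over a model says the reverse of a Morley sequence is a Shelah sequence. These together force the Morley sequence to be totally indiscernible, without ever needing to manufacture a contradiction to NIP from the bare order property.
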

\begin{prf}
    Taking a free extension we may assume $A$ is a model. Now every
    nonforking sequence in $p$ is by stationarity both a Morley and
    a  Shelah sequence. It is easy to see that this implies generic
    stability.
\end{prf}

\begin{lem}\label{lem:uniquestat}
\begin{enumerate}
\item
    Let $A$ be a model (or just $A = \bdd^{heq}(A)$), $p \in \tS(A)$ be a type satisfying Uniqueness of nonforking
    as in Observation \ref{obs:symunique}(ii). Then $p$ is
    stationary.
\item
    Let $p$ be a Lascar strong type satisfying Uniqueness of nonforking
    as in Observation \ref{obs:symunique}(ii) with ``type'' replaced by
    ``Lascar strong type''. Then $p$ is
    stationary.
\item
    Let $p \in \tS(A)$ be a definable type which satisfies Uniqueness of
    definable extensions in the following sense: any two
    definitions of it over $A$ agree on the set of realizations of
    $p$. Then $p$ has a unique global extension definable over $A$.
\end{enumerate}
\end{lem}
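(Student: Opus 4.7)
The three parts share a parallel structure, so I will describe a single scheme and specialize to each. In every case the plan is: pass to a global extension, use the Uniqueness hypothesis to establish agreement on parameters taken from $p^\fC$, and then transfer this agreement to arbitrary parameters by exploiting the $A$-invariance that automatically comes with nonforking / definability (Fact \ref{fct:splitfork}, Observation \ref{obs:lascarsplit}).

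For (i), let $C \supseteq A$ and suppose $q_1, q_2 \in \tS(C)$ are two nonforking extensions of $p$. Using Fact \ref{fct:nonforkexist} I extend them to global nonforking types $\tilde q_1, \tilde q_2$; since $A$ is either a model or $\bdd^{heq}$-closed, Fact \ref{fct:splitfork}(2) gives that these do not split over $A$. For any $\c \in p^\fC$, pick realizations $\a_i \models \tilde q_i\restriction A\c$; both satisfy $\a_i \models p$ and $\a_i \ind_A \c$, so the Uniqueness hypothesis (applied with $B=\{\c\}$) yields $\a_1 \equiv_{A\c} \a_2$, which means $\tilde q_1$ and $\tilde q_2$ agree on $\c$. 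To extend this agreement to arbitrary $\c \in \fC$, I build, via Fact \ref{fct:nonforkexist}, a long nonforking sequence $J=\inseq{\d}{j}{\om}$ in $p$ over $A\c\a_1\a_2$; then $J \subseteq p^\fC$, and applying Uniqueness with $B = J$ gives $\a_1 \equiv_{AJ} \a_2$. Finally, by the nonsplitting of $\tilde q_i$ over $A$, the value $\ph(\x,\c) \in \tilde q_i$ depends only on $\tp(\c/A)$; realizing this type by an element of $J$ (or of a long enough extension inside $J$'s indiscernibility class, via Fact \ref{fct:splitind}), the agreement on $J$ propagates to $\c$.

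Part (ii) is proved by the same argument with every occurrence of $\tp$ replaced by $\lstp$: the Lascar form of Uniqueness applies verbatim, and Observation \ref{obs:lascarsplit} replaces Fact \ref{fct:splitfork}(2) in guaranteeing that a global nonforking extension cannot Lascar-split over $A$. For (iii), let $q_1, q_2$ be two global extensions of $p$ definable over $A$ with definition schemata $d_1, d_2$; these are both definitions of $p$ over $A$, so the hypothesis forces $d_1 = d_2$ on $p^\fC$. I then build a common Morley sequence $I = \inseq{\a}{i}{\om}$ in $p$ over $A$ by recursion: at stage $i$ one has $\a_{<i} \subseteq p^\fC$, hence $p|^{d_1}A\a_{<i}=p|^{d_2}A\a_{<i}$, so $\a_i$ can be chosen realizing both simultaneously. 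This yields $q_1\restriction (A\cup I)=q_2\restriction (A\cup I)$, and passing to a $(|A|+\aleph_0)^+$-saturated $M \supseteq A\cup I$, Lemma \ref{lem:unique} gives that nonsplitting extensions from $M$ to $\fC$ are unique; combining this with $A$-invariance of each $q_i$ (used to reduce arbitrary $\c\in M$ to $A$-types already realized in $I$) identifies $q_1 \restriction M$ with $q_2 \restriction M$ and hence $q_1$ with $q_2$.

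The main obstacle common to all three parts is the transfer step: the Uniqueness hypothesis only sees parameters from $p^\fC$, so one must argue that $A$-invariant (nonsplitting, Lascar-nonsplitting, or definable) types are in fact determined by their restrictions to $p^\fC$. This is where the assumption that $A$ is a model or $\bdd^{heq}$-closed becomes essential, via Fact \ref{fct:splitfork}(2): it lets us collapse nonforking to Lascar-nonsplitting and then invoke an indiscernible sequence of realizations of $p$ (built by Fact \ref{fct:nonforkexist} and Fact \ref{fct:splitind}) to act as a bridge between arbitrary parameters and $p$-realizations.
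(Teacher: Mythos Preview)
Your transfer step does not work. You correctly observe that the Uniqueness hypothesis only controls parameters in $p^\fC$, and you try to bridge to arbitrary $\c$ by invoking $A$-nonsplitting of $\tilde q_i$ and ``realizing $\tp(\c/A)$ by an element of $J$''. But $J \subseteq p^\fC$, while $\c$ is arbitrary: there is no reason whatsoever for $\tp(\c/A)$ to be realized by a tuple of realizations of $p$. The same defect recurs in your argument for (iii), where you claim arbitrary $\c \in M$ can be reduced to an $A$-type realized in $I \subseteq p^\fC$. (There is also a direction-of-independence issue earlier: you build $J$ nonforking over $A\c\a_1\a_2$, but to apply the Uniqueness hypothesis with $B=J$ you would need $\a_i \ind_A J$, not $J \ind_A \a_i$, and you have no symmetry available.)

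The paper's argument avoids the transfer problem entirely by not trying to prove $q_1=q_2$ directly. Instead: fix one $\c$ and one $\ph$ with $\ph(\x,\c)\in q_1$, $\neg\ph(\x,\c)\in q_2$, and build an \emph{alternating} sequence $\a_{2i}\models q_1\rest A\c\a_{<2i}$, $\a_{2i+1}\models q_2\rest A\c\a_{<2i+1}$. This is a nonforking sequence in $p$; since $A$ is a model (resp.\ $\bdd^{heq}$-closed, resp.\ the types are definable over $A$), each $\tp(\a_i/A\a_{<i})$ does not split (resp.\ Lascar-split) over $A$. For the second hypothesis of Fact~\ref{fct:splitind} (resp.\ Observation~\ref{obs:splitind}) one needs $\a_i\equiv_{A\a_{<i}}\a_j$ for $j\ge i$; but here $\a_{<i}\subseteq p^\fC$ and both $\a_i,\a_j$ are nonforking over $A\a_{<i}$, so this is exactly what the Uniqueness hypothesis gives. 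Hence the sequence is $A$-indiscernible, and now $\ph(\x,\c)$ alternates along it, contradicting dependence. The point is that Uniqueness is only ever invoked with parameters from $p^\fC$, and the single external parameter $\c$ is handled by Fact~\ref{fct:depend} rather than by any transfer argument.
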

\begin{prf}
\begin{enumerate}
\item
    Assume that $p$ has two global nonforking extensions $q_1$ and
    $q_2$. Then there is $\c\in\fC$ and a formula $\ph(\x,\y)$ such
    that $\ph(\x,\c) \in q_1$, $\neg\ph(\x,\c) \in q_2$. Now
    define a sequence of realizations of $p$, $I = \lseq{\a}{i}{\om}$
    as follows:
    $$\a_{2i} \models q_1\rest A\c\a_{<2i}$$
    $$\a_{2i+1} \models q_2\rest A\c\a_{<2i+1}$$
    This is a nonforking sequence. Since $A$ is a model,
    it is also nonsplitting. By Uniqueness of
    nonforking extensions on the set of realizations of $p$ (that
    is, by the assumption) and Fact \ref{fct:splitind}, it is
    indiscernible. Now $\ph(\x,\c)$ and $I$ clearly contradict
    dependence.
\item
    Same proof with splitting replaced by Lascar splitting (using
    Observation \ref{obs:lascarsplit}) and Fact \ref{fct:splitind}
    replaced with Observation \ref{obs:splitind}.
\item
    Similar.
\end{enumerate}
\end{prf}

\begin{cor}
\begin{enumerate}
\item
    Let $A$ be a model (or just $A = \bdd^{heq}(A)$), $p \in \tS(A)$ be a definable type which satisfies
    Uniqueness of nonforking as in Observation
    \ref{obs:symunique}(ii). Then $p$ is generically stable.
\item
    Let $p$ be a definable Lascar strong type over $A$ which satisfies
    Uniqueness of nonforking as in Observation
    \ref{obs:symunique}(ii) with ``type'' replaced with ``Lascar strong type''. Then $p$ is generically stable.
\end{enumerate}
\end{cor}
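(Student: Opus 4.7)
The plan is to observe that this corollary is essentially a direct combination of the two preceding results: Lemma \ref{lem:uniquestat} and Observation \ref{obs:statstable}. No new work is needed; it is just a matter of chaining the implications correctly, and I expect no real obstacle.

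For part (i), I would first apply Lemma \ref{lem:uniquestat}(1): the hypothesis that $A$ is a model (or more generally $A = \bdd^{heq}(A)$, which clearly suffices for the proof of that lemma since it is exactly the condition guaranteeing that a nonforking sequence over $A$ is automatically nonsplitting) together with Uniqueness of nonforking in the sense of Observation \ref{obs:symunique}(ii) yields that $p$ is stationary. Next, since $A = \bdd^{heq}(A)$ implies $A = \acl(A)$ and $p$ is assumed definable, Observation \ref{obs:statstable} applies directly to give that $p$ is generically stable. Concatenating these two facts gives (i).

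For part (ii), the same strategy works once we reinterpret a Lascar strong type $p$ as a type over $\bdd^{heq}(A)$. Then $A' := \bdd^{heq}(A)$ satisfies $A' = \bdd^{heq}(A') = \acl^{eq}(A')$ (in the heq sort), and the Uniqueness hypothesis rephrased for Lascar strong types is exactly what is needed to apply Lemma \ref{lem:uniquestat}(2), yielding stationarity of $p$. Since $p$ is also definable, Observation \ref{obs:statstable} applied over $A'$ gives generic stability.

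The only point that requires any care is making sure that the Uniqueness-of-nonforking condition for Lascar strong types is the same condition the proof of Lemma \ref{lem:uniquestat}(2) uses, and that ``definable'' in the corollary has the correct meaning over $\bdd^{heq}(A)$ for part (ii); both are routine. Thus the proof will be short, essentially one line invoking the two preceding results in sequence.
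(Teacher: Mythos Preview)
Your proposal is correct and follows exactly the paper's approach: the paper's proof is the single line ``By Lemma \ref{lem:uniquestat} and Observation \ref{obs:statstable},'' which is precisely the chaining you describe. Your additional remarks about passing to $\bdd^{heq}(A)$ in part (ii) and checking that the definability hypothesis transfers are reasonable elaborations of details the paper leaves implicit.
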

\begin{prf}
    By Lemma \ref{lem:uniquestat} and Observation \ref{obs:statstable}.
\end{prf}

Note that uniqueness of definable extensions does not imply generic
stability: the type ``at infinity'' in the theory of a dense linear
order $(\setQ,<)$ is definable and has a unique global definable
extension. It is not, of course, stationary or generically stable.

%Note that we
%are not assuming that $p$ is a type over an $\aleph_1$-saturated model (although we could,
%taking the unique free extension of $p$ to any such $M$ containing $A$, see below) since
%we want to define independence over sets. Therefore it is not enough to assume that $p$
%is both finitely satisfiable in $A$ and definable over it, see Example \ref{exm:unstable}.

%It is important, though, to assume that $p$ is finitely satisfiable in $A$.

%By Fact \ref{fct:stabdef} we know that $p$ has uniqueness.

%Let $M$ be an $|T|^+$-saturated model,
%$p$ a stable type $p \in \tS(M)$ definable by a definition scheme $d_p$ over
%$A \subseteq M$ and finitely saisfiable in it, $|A|\le |T|$. Given $M \subseteq B$,
%there is a unique nonsplitting extension $q$ of $p$ in $\tS(B)$, and we call it
%the \emph{the free extension} of $p$ ober $B$. Clearly, $q$ is both a heir and a co-heir
%of $p$ over $B$, and $q$ must be the extension of $p$ obtained using the definition scheme
%$d_p$. We will often denote $q$ by $p | B$.

%Let $\a \models p$. We say that $\a \ind$

\section{Strong stability and stable weight}
In this section we develop the basic theory of stable weight of a
type, that is, weight with respect to generically stable types. Our
hope is that in a dependent theory it is possible to ``analyze'' an
arbitrary type with respect to its ``stable-like'' part and a
``partial order''. The goal of stable weight is to provide certain
understanding of the ``stable'' part.

We aim to connect finiteness of stable weight to strong dependence
introduced by Shelah in \cite{Sh783} and studied more intensively in
\cite{Sh863}.  The following definitions are motivated by those
notions.
% of strong dependence of Shelah, see e.g. \cite{Sh863}.

\begin{dfn}
\begin{enumerate}
\item
    A \emph{randomness pattern} of depth $\ka$ for a (partial) type $p$ over a set $A$ is an
    array $\seq{\b_i^\al \colon \al<\ka, i<\om}$ and formulae $\ph_\al(\x,\y_\al)$ for
    $\al<\ka$ such that
    \begin{enumerate}
    \item
        The sequences $J_\al = \seq{\b^\al_i\colon i<\om}$ are
        mutually indiscernible over $A$; more precisely, $J_\al$ is
        indiscernible over $AJ_{\neq \al}$
    \item
        $\len(\b^\al_i) = \len(\y_\al)$
    \item
        for every $\eta \in {}^\ka\om$, the set
        $$ \Gamma_\eta = \set{\ph_\al(\x,\b^\al_\eta(\al) \colon \al < \ka} \cup
        \set{\neg\ph_\al(\x,\b^\al_i) \colon \al<\ka, i<\om, i\neq \eta(\al)}$$
        is consistent with $p$.
    \end{enumerate}
\item
    A (partial) type $p$ over a set $A$ is called \emph{strongly dependent} if there do not exist
    formulae $\ph_\al(\x,\y_\al)$ for $\al<\om$ and
    sequences \seq{\b^\al_i \colon i<\om} for $\al<\om$
    mutually indiscernible over $A$ such that
    for every $\eta \in {}^\om\om$, the set
    $$ \Gamma_\eta = \set{\ph_\al(\x,\b^\al_\eta(\al) \colon \al < \om} \cup
    \set{\neg\ph_\al(\x,\b^\al_i) \colon \al<\om, i\neq \eta(\al)}$$
    is consistent with $p$.

    In other words, $p$ is called strongly dependent if there does not exist a randomness
    pattern for $p$ of depth $\ka = \om$.
\item
    \emph{Dependence rank} (dp-rk) of a (partial) type $p$ over a set $A$
    is the supremum of all $\ka$ such that there exists a randomness pattern
    for $p$ of depth $\ka$.
%   is the minimal $\ka$ such that there does not exist a randomness pattern
%   for $p$ of depth $\ka$.
    \item
    A (partial) type over a set $A$ is called \emph{dp-minimal} if
    dp-rank of $p$ is 1.

    In other words, $p$ is dp-minimal if there does not exist a randomness pattern
    for $p$ of depth $2$.
\item
    A theory is called strongly dependent/dp-minimal if the partial
    type $x=x$ is.
\item
    Let $T$ be dependent. A type $p$ is called \emph{strongly stable} if it is strongly dependent and
    generically stable.
\end{enumerate}
\end{dfn}

\begin{rmk}
%\begin{enumerate}
%\item
    For a partial type $p$, $\dpr(p)\ge 1$ iff $p$ is
    nonalgebraic.
%\end{enumerate}
\end{rmk}
\begin{prf}
%\begin{enumerate}
%\item
    The ``only if'' direction is obvious. For the ``if'' direction,
    by non-algebraicity, the formula $x=y$ does the trick.
%\end{enumerate}
\end{prf}

\begin{rmk}
    A very close relative of $\dpr$ is called ``burden'' by Hans Adler in
    \cite{Ad2}. He also studies ``strong'' theories which is a class
    containing strongly dependent theories, but also some
    independent ones, e.g. supersimple theories, and more.
\end{rmk}

%From now on we assume that $T$ is dependent. Given a type $p$,
We can define the \emph{stable weight} of $p$, $\swt(p)$ as weight
of $p$ with respect to generically stable types:

\begin{dfn}
\begin{enumerate}
\item
    Let $p \in \tS(A)$ be a type. We define the \emph{stable pre-weight} of $p$,
    $\spwt(p)$, to be the supremum of all $\al$ such that
    there exist $\a \models p$, generically stable types $\lseq{q}{i}{\al}$ over $A$ and
    $\b_i \models q_i$ such that:
    \begin{itemize}
    \item
        $\set{\b_i \colon i<\al}$ is an independent
        set over $A$
    \item
        %$\a \nind_A \b_i$ for all $i$
        $\tp(\a/A\b_i)$ divides over $A$ for all $i$
    \end{itemize}
\item
    The \emph{stable weight} of $p$, $\swt(p)$ is the supremum of
    the stable pre-weights of all nonforking extensions of $p$.
\end{enumerate}
\end{dfn}

The main goal of this section is to show that a strongly dependent
type has finite stable weight. We will need the following slightly
surprising strengthening of the Symmetry Lemma. Recall that Remark
\ref{rmk:symdef} states that for the proof of Lemma \ref{lem:symm}
it is enough to assume that one of the types is generically stable
and the other one is definable. We intend to eliminate definability
from the assumptions.

For simplicity of notation, we will denote ``$\tp(B/AC)$ does not
fork over $A$'' by ``$B\ind_AC$'' even if $\tp(B/A)$ is not
generically stable (and so the relation above does not need to be
symmetric).

\begin{lem}\label{lem:strongsymm}
(Strong Symmetry Lemma) Let $p\in\tS(A)$ be generically stable, $q
\in \tS(A)$ does not fork over $A$, $\a\models p$, $\b \models q$.
Then
\begin{enumerate}
\item
$\a\ind_A\b \Longrightarrow \b\ind_A\a$. Moreover, if $A = \acl(A)$
and $\a\ind_A\b$, then there exists a unique nonforking extension of
$q$ to $\tS(A\a)$ which equals $\tp(\b/A\a)$.
\item
%If in addition $\b\ind_{A\a}A\a$, then
$\b\ind_A\a \Longrightarrow
\a\ind_A\b$.
\end{enumerate}
\end{lem}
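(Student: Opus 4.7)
The plan is to adapt the proof of the standard Symmetry Lemma (Lemma \ref{lem:symm}), replacing the good definition $d_q$ of $q$ — which is no longer available — with a global nonforking extension $q^*$ of $q$ provided by Fact \ref{fct:nonforkexist}. The crucial observation is that the contradiction in the proof of Lemma \ref{lem:symm} arises from applying Observation \ref{obs:stabform} to only the generically stable side of the construction; the other sequence needs only to be an $A$-indiscernible sequence, not an indiscernible set. First I would reduce to $A = \acl(A)$ and fix $p^* := p|^d\fC$, the unique global nonforking extension of $p$ (by stationarity, Corollary \ref{cor:stabstat}), which is definable over $A$ and hence does not split over $A$; also fix a global nonforking extension $q^*$ of $q$, which does not Lascar-split over $A$ by Observation \ref{obs:lascarsplit}.

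For Part (i), assume $\a \ind_A \b$ but $\b \nind_A \a$ toward contradiction. Then $\tp(\b/A\a) \neq q^*|A\a$, so there is a formula $\ph(\x,\y)$ with $\ph(\a,\b)$ holding, $\ph(\x,\b) \in p^*$ (using $\a \ind_A \b$ and stationarity) and $\ph(\y,\a) \notin q^*$. Build alternating sequences for $i < \omega + \omega$:
\[
    \a_0 := \a, \qquad \a_i \models p^* | A\a_{<i}\b_{<i}, \qquad \b_i \models q^* | A\a_{\le i}\b_{<i}.
\]
Then $\seq{\a_i}$ is a Morley sequence in $p$ starting from $\a$, hence an $A$-indiscernible set by generic stability. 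For $i \le j$, $\b_j$ is chosen after $\a_i$, so $\ph(\a_i,\b_j)$ iff $\ph(\y,\a_i) \in q^*$; since $\a_i$ and $\a$ share Lascar strong type over $A$ (same $A$-indiscernible sequence) and $q^*$ is Lascar-invariant, this is iff $\ph(\y,\a) \in q^*$, which is false. For $i > j$, $\a_i$ is chosen after $\b_j$, so $\ph(\a_i,\b_j)$ iff $\ph(\x,\b_j) \in p^*$; since $\b_j \equiv_A \b$ and $p^*$ does not split over $A$, this is iff $\ph(\x,\b) \in p^*$, which is true. Thus $\ph(\a_i,\b_j)$ holds iff $i > j$; fixing $j = k_\ph$ from Observation \ref{obs:stabform} yields both $\ge k_\ph$-many $i$ with $\ph(\a_i,\b_j)$ and $\ge k_\ph$-many without, violating stability of $\ph$ on the indiscernible set $\seq{\a_i}$. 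The uniqueness clause follows by the same argument: any two nonforking extensions of $q$ to $A\a$ disagreeing on some $\ph(\y,\a)$ can be separated by choosing $q^*$ to extend the second, and the same construction yields a contradiction using $\a \ind_A \b$.

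Part (ii) is symmetric. Given $\b \ind_A \a$, pick $q^*$ with $q^*|A\a = \tp(\b/A\a)$. Suppose $\a \nind_A \b$; then $\tp(\a/A\b) \neq p^*|A\b$, so there is $\ph$ with $\ph(\a,\b)$ holding, $\ph(\x,\b) \notin p^*$, and $\ph(\y,\a) \in q^*$. Run the analogous construction with $\a_0 = \a$ and $\b_0 = \b$ (valid since $\b \models q^*|A\a$). The evaluation now yields $\ph(\a_i,\b_j)$ iff $i \le j$, and Observation \ref{obs:stabform} applied to the indiscernible set $\seq{\a_i}$ again produces the contradiction.

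The main subtlety — the place where this argument goes strictly beyond Lemma \ref{lem:symm} — is that the $\b$-sequence in our construction is only an $A$-indiscernible sequence, obtained from the Morley construction in $q^*$ together with Observation \ref{obs:splitind}, and need not be an indiscernible set. But the stability contradiction only exploits the $\a$-side being an indiscernible set, so the apparent obstacle never materializes; the combinatorial heart of the proof is just that an $A$-indiscernible set of realizations of a generically stable type cannot witness the order property with any parameter.
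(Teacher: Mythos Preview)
Your argument is correct. Part (i) is essentially the paper's proof: reduce to $A=\acl(A)$, fix a global nonforking extension $q^*$ of $q$, and derive a contradiction from $\tp(\b/A\a)\neq q^*|A\a$ by building the alternating $\a$/$\b$ sequences and applying Observation~\ref{obs:stabform} on the $\a$-side, which is a Morley sequence in $p$ and hence an indiscernible set. The uniqueness clause follows because the construction works for \emph{every} global nonforking $q^*$, so every such $q^*$ restricts to $\tp(\b/A\a)$ over $A\a$.

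For part (ii) you take a genuinely different route from the paper. You rerun the alternating construction symmetrically, now legitimately starting with $\b_0=\b$ since $q^*$ was chosen with $q^*|A\a=\tp(\b/A\a)$; this yields the reversed pattern $\ph(\a_i,\b_j)\Leftrightarrow i\le j$ and the same contradiction via the $\a$-side. The paper instead deduces (ii) from the ``moreover'' clause of (i): assuming $\a\nind_A\b$, it picks $\a'\equiv_A\a$ with $\a'\ind_A\b$, uses uniqueness of the nonforking extension of $q$ over $A\a'$ to conclude that $q\cup\{\neg\ph(\a',\y)\}$ forks over $A$, and then transports this by an $A$-automorphism to $\a$, showing $\tp(\b/A\a)$ forks. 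The paper's route is shorter once (i) is available and shows the uniqueness statement doing real work; yours is more self-contained and makes the two directions visibly symmetric. A minor point: your aside that $\seq{\b_i}$ is $A$-indiscernible via Observation~\ref{obs:splitind} is not needed and the Lascar-strong-type hypothesis there is not obviously verified, but since your contradiction uses only the $\a$-side this is harmless.
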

\begin{prf}
\begin{enumerate}
\item
  Clearly, it is enough to prove the lemma for
  $A = \acl(A)$. Let $q^*$ be a global nonforking extension of $q$.
%  Assume first that $\a\ind_A\b$.
  % and $\b\nind_A\a$.
  We will show
  that $q^*\rest A\a = \tp(\b/A\a)$, proving the moreover part as
  well.

  Suppose not.
  %By Observation \ref{obs:inddef} there is
  Then there is a formula $\ph(\x,\y)$ such that $\ph(\a,\b)$ (so $d_p\x\ph(\x,\b)$ holds), but
  %$\models\neg d_q\y\ph(\a,\y)$.
    $\neg\ph(\a,\y) \in q^*$.

  Let $\a_0 = \a$, $\b_0 = \b$.
  Construct sequences $\seq{\a_i}$,
  \seq{\b_i} for $i<\om+\om$ as follows:

  $$ \a_i \models p | A\lseq{\a}{j}{i}\lseq{\b}{j}{i} $$
  $$ \b_i \models q^* \rest A\lseq{\a}{j}{i+1}\lseq{\b}{j}{i} $$

  Now note:

  \begin{itemize}
  \item
    $j<i \then \ph(\a_i,\b_j)$: since $\models \d_p\x\ph(\x,\b)$,
    $\b \equiv_A \b_j$ and $\a_i$ is chosen generically over $A\b_j$
  \item
    $j\ge i \then \neg\ph(\a_i,\b_j)$: since $\neg\ph(\a,\y)\in q^*$,
    $q^*$ does not fork hence does not Lascar split over $A$,
    $\a \equiv_{Lstp,A} \a_i$ (in fact, they are of Lascar distance
    1) and $\b_j$ was chosen to realize $q^*$ over $A\a_i$.
  \end{itemize}

  As in the proof of Lemma \ref{lem:symm}, this is a contradiction
  to generic stability of $p$ (that is, $\lseq{\a}{i}{\om+\om}$ being an
  indiscernible set).

\item
    Now assume $\a\nind_A\b$, so there is a formula $\ph(\x,\y)$
    such that $d_p\x\ph(\x,\b)$ but $\neg\ph(\a,\b)$ holds. Again without loss
    of generality $A = \acl(A)$.

%    Let $\b' \models q$ such that $\b' \ind_A\a$.

    Let $\a'
    \models p$ such that $\a' \ind_A \b$, so $\ph(\a',\b)$. By
    clause (i) of the Lemma we have
    \begin{equ}\label{equ:strongsymm1}
    The only nonforking extension of $q$ to $A\a'$ is
    $\tp(\b/A\a')$.
    \end{equ}

    On the other hand, $\neg\ph(\a',\y)$ is consistent with $q$ (by
    applying an automorphism of $\tp(\b/A\a)$ over $A$ taking $\a$ to
    $\a'$). So $q\cup\set{\neg\ph(\a',\y)}$ forks over $A$ (by
    \ref{equ:strongsymm1}), hence so does $q \cup
    \set{\neg\ph(\a,\y)}$, which is a subset of $\tp(\b/A\a)$, as
    required.

%    By the
%    same method as in (i) we can show that every global nonforking extension
%    $q^*$ of $q$ has to satisfy $\ph(\a,\y) \in q^*$, so
%    %$\neg\ph(\y,\a)$ forks over $A$, in particular $\b \nind_A \a$.
%    $\tp(\b/A\a)$ forks over $A$, as required.

%%    Note that we do not know that $\b\ind_{A\a}A\a$, so it could be
%%    the case that $\ph(\a,\y)$ forks over $A\a$, but this doesn't
%    change the conclusion.
%    Note that we've used the assumption that $\tp(\b/A\a)$ does not
%%    fork over $A\a$ (otherwise it could be the case that
%%    $\neg\ph(\a,\y)$ forks over $A\a$.
\end{enumerate}

%  These sequences exemplify the order property for $\ph(\x,\y)$.
%  But they are indiscernible sets (as $p$ and
%  $q$ are generically stable), so $\ph(\x,\y)$ is supposed to be stable with respect
%  to them, see Observation \ref{obs:stabform}, and this is clearly not the case, take
%  e.g. $\ph(\a_\om,\b_i)$ which holds for $i<\om$ and fails for $i>\om$.
\end{prf}

We will make use of the following well-known fact (due to Morley):

\begin{fct}\label{fct:Erdos Rado}
    Let $\lam$ be a cardinal. Then there exists $\mu>\lam$ such that
    for every set $A$ of cardinality $\lam$ and a sequence of tuples
    \lseq{a}{i}{\mu} there exists an \om-type $q(x_0,x_1,\cdots)$ of
    an $A$-indiscernible sequence such that for every $n<\om$ there
    exist $i_1<i_2<\ldots<i_{n}<\mu$ such that
    the restriction of $q$ to the first $n$ variables equals
    $\tp(a_{i_1}\ldots a_{i_n}/A)$.

    We will sometimes denote $\mu$ as above by $\mu(\lam)$.
\end{fct}

We will also need some basic facts about nonforking calculus and
preservation of independence in dependent theories. Recall that
``$B\ind_AC$'' stands for ``$\tp(B/AC)$ does not fork over $A$''.

\begin{fct}\label{fct:lefttrans}(Shelah)
Let $A, B$ be sets and assume that $I = \lseq{a}{i}{\lam}$ is a
nonforking sequence based on A, that is ,$a_i \ind_ABa_{<i}$ for all
$i<\lam$. Then $I\ind_A B$, that is, $\tp(I/AB)$ does not fork over
$A$.
\end{fct}
\begin{prf}
This is Claim 5.16 in \cite{Sh783}.
\end{prf}

\begin{cor}\label{cor:indep set}
    Let $\set{A_i\colon i<\lam}$ be a nonforking (independent) set over $A$,
    that is, $A_i \ind_A A_{\neq i}$ for all $i$. Then for every
    $W,U \subseteq \lam$ disjoint we have $A_{\in W}\ind_AA_{\in
    U}$.
\end{cor}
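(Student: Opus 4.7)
The plan is to reduce the corollary directly to Fact \ref{fct:lefttrans}, using only monotonicity of nonforking.

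First I would fix disjoint $W, U \subseteq \lambda$ and enumerate $W$ as $\{i_\alpha : \alpha < |W|\}$ in some (any) well-ordering. Set $a_\alpha = A_{i_\alpha}$ and $B = A_{\in U}$; the goal $A_{\in W}\ind_A A_{\in U}$ becomes $(a_\alpha)_{\alpha<|W|} \ind_A B$. This is exactly the conclusion of Fact \ref{fct:lefttrans} applied to this sequence, provided its hypothesis holds, i.e.\ provided $a_\alpha \ind_A B\, a_{<\alpha}$ for every $\alpha$.

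Next I would verify this hypothesis. By the independence assumption $A_{i_\alpha}\ind_A A_{\neq i_\alpha}$. Since $W$ and $U$ are disjoint and $\{i_\beta : \beta<\alpha\}$ misses $i_\alpha$, we have
\[
U \cup \{i_\beta : \beta<\alpha\} \subseteq \lambda\setminus\{i_\alpha\},
\]
so $B\, a_{<\alpha} = A_{\in U}\cup\{A_{i_\beta}:\beta<\alpha\} \subseteq A_{\neq i_\alpha}$. Monotonicity of nonforking (a general property of forking, used freely earlier in the paper) then yields $a_\alpha \ind_A B\, a_{<\alpha}$, as needed.

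The main (and essentially only) step is the appeal to Fact \ref{fct:lefttrans}; once that left-transitivity fact is in hand, there is no real obstacle — the rest is just bookkeeping to set up the enumeration so that the hypothesis of the fact is visibly inherited from the pairwise condition $A_i \ind_A A_{\neq i}$ via monotonicity. Applying the fact produces $(a_\alpha)_{\alpha<|W|}\ind_A B$, which is the asserted $A_{\in W}\ind_A A_{\in U}$.
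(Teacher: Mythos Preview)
Your proposal is correct and is exactly the intended argument: the paper's own proof reads in full ``Monotonicity and transitivity on the left,'' where ``transitivity on the left'' refers to Fact~\ref{fct:lefttrans}. You have simply unpacked that one-line hint by enumerating $W$, applying monotonicity to obtain the hypothesis $a_\alpha \ind_A B\,a_{<\alpha}$, and then invoking Fact~\ref{fct:lefttrans}.
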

\begin{prf}
    Monotonicity and transitivity on the left.
\end{prf}

\begin{obs}\label{obs:indispreserve}
    Suppose $I$ is an indiscernible sequence over $A$ and
    $B \ind_A I$. Then $I$ is indiscernible over $AB$.
\end{obs}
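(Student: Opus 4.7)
The plan is to deduce $AB$-indiscernibility of $I$ from the single fact that any global nonforking extension of $\tp(B/AI)$ does not Lascar-split over $A$, combined with the standard fact that any two increasing finite subsequences of an $A$-indiscernible sequence have the same Lascar strong type over $A$.

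First I would fix two increasing tuples $\bar b = a_{i_1}\ldots a_{i_n}$ and $\bar b' = a_{j_1}\ldots a_{j_n}$ from $I$ of the same length; it suffices to show $\bar b \equiv_{AB} \bar b'$. Since they already agree on type over $A$ by $A$-indiscernibility of $I$, we only need $\bar b \equiv_B \bar b'$. By Fact \ref{fct:nonforkexist}, extend $\tp(B/AI)$ to a global type $r \in \tS(\fC)$ which does not fork over $A$. The contrapositive of Observation \ref{obs:lascarsplit} then says that $r$ does not Lascar-split over $A$: if it did, being a nonforking extension over a sufficiently large set, one would obtain forking, a contradiction.

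Next, I would verify the claim that $\bar b$ and $\bar b'$ have the same Lascar strong type over $A$. Using compactness, extend $I$ to an $A$-indiscernible sequence $I^*$ of length, say, $\omega \cdot n + 1$ or more. Then one can produce a single $A$-indiscernible sequence of $n$-tuples from $I^*$ that contains both $\bar b$ and $\bar b'$ as elements (for instance, by padding and reindexing the relevant positions and invoking $A$-indiscernibility of $I^*$ on $n$-blocks). Hence $\lstp(\bar b/A) = \lstp(\bar b'/A)$ (in fact Lascar distance $1$). Since $r$ does not Lascar-split over $A$ and $\bar b, \bar b' \in AI \subseteq \operatorname{dom}(r)$, we obtain for every formula $\varphi(\bar y,\bar z)$ that $\varphi(\bar y,\bar b) \in r \iff \varphi(\bar y,\bar b') \in r$; restricting $r$ back to $AI$ gives $\varphi(B,\bar b) \iff \varphi(B,\bar b')$, so $\bar b \equiv_{AB} \bar b'$ as required.

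The only genuine work is the Lascar-distance claim for tuples from an indiscernible sequence, which is classical; everything else is a direct application of Observation \ref{obs:lascarsplit} and Fact \ref{fct:nonforkexist}. So the main (minor) obstacle is simply being careful with the block-construction of an indiscernible sequence of $n$-tuples containing both $\bar b$ and $\bar b'$, but this is routine.
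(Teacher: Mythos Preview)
Your argument is correct and follows essentially the same idea as the paper's proof: nonforking of $\tp(B/AI)$ over $A$ gives a non-splitting property, and any two increasing $n$-tuples from an $A$-indiscernible sequence have Lascar distance $1$ over $A$, so they receive the same type over $AB$. The paper does this slightly more directly: it invokes Fact~\ref{fct:strongsplit} (strong splitting implies dividing) to conclude that $\tp(B/AI)$ itself does not split strongly over $A$, and then uses the Lascar-distance-$1$ fact immediately, without passing to a global extension. Your detour through a global nonforking extension and Observation~\ref{obs:lascarsplit} (Lascar splitting implies forking) is harmless but unnecessary---Observation~\ref{obs:lascarsplit} already applies to $\tp(B/AI)$ directly, and in any case strong splitting (distance $1$) is exactly what is needed here.
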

\begin{prf}
    By Fact \ref{fct:splitfork}  $\tp(B/AI)$ does not split strongly
    over $A$. Recall that this implies that for every $\a_1,\a_2 \in
    I$ which are on the same $A$-indiscernible sequence we have
    $B\a_1 \equiv_A B\a_2$, which is precisely what we want.
\end{prf}

The following lemma is the key to the proof of the main theorem. It
shows that indiscernible sequences which start with generically
stable independent elements can be assumed to be mutually
indiscernible.
%For more results on mutual indiscernibility in
%dependent theories see section 4 of \cite{OnUs2}.

\begin{lem}\label{lem:mutindisc}
    Let $A$ be an extension base (that is, no
    type over $A$ forks over $A$; e.g. $A$ is a model).
    Let $\set{\a_i \colon i<\al}$ be an $A$-independent set of
    elements satisfying generically stable types over $A$, and let
    $\lseq{I}{i}{\al}$ be a sequence of $A$-indiscernible
    sequences starting with $\a_i$ respectively.  Then there exist sequences
    \lseq{I'}{i}{\al} such that
    \begin{itemize}
    \item
        $I'_{i} \equiv_{A} I_i$
    \item
        $I'_i$ starts with $\a_i$
    \item
        $I'_{i}$ is indiscernible over $AI'_{\neq i}$
    \end{itemize}
\end{lem}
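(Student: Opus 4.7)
The plan is to construct, for each $i < \al$, a long $A$-indiscernible extension $J_i$ of $I_i$ starting with $\a_i$, chosen so that the family $\{J_i\}$ is mutually $A$-indiscernible; the desired $I'_i$ is then the initial segment of $J_i$ of length $|I_i|$.

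First I would reduce to the case $A = \acl(A)$: this preserves the hypotheses, since each $p_i := \tp(\a_i/A)$ remains generically stable hence stationary (Corollary~\ref{cor:stabstat}), and forking-independence is unaffected. Fix $\mu$ large enough for Fact~\ref{fct:Erdos Rado}, and for each $i$ let $\pi_i$ denote the unique $A$-type of an $A$-indiscernible extension of $I_i$ of length $\mu$ (uniqueness follows from the $A$-indiscernibility of $I_i$ and compactness).

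The $J_i$ would be built by induction on $i$, maintaining the invariant that $\{J_j : j \le i\} \cup \{\a_k : k > i\}$ is an $A$-independent family. At stage $i$, the invariant supplies $\a_i \ind_A J_{<i}\a_{>i}$. Since $A$ is an extension base, $\pi_i$ does not fork over $A$, so Fact~\ref{fct:nonforkexist} yields $J_i^* \models \pi_i$ with $J_i^* \ind_A J_{<i}\a_{>i}$. Its first element $\a_i^*$ realizes $p_i$ and is $A$-independent from $J_{<i}\a_{>i}$; by stationarity of $p_i$, $\a_i^* \equiv_{AJ_{<i}\a_{>i}} \a_i$, so an automorphism fixing $AJ_{<i}\a_{>i}$ and sending $\a_i^*$ to $\a_i$ produces the desired $J_i$. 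Maintaining the invariant for the remaining generically stable $\a_k$'s ($k > i$) uses the Strong Symmetry Lemma~\ref{lem:strongsymm} to pivot on their generically stable types, together with standard transitivity of nonforking.

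To conclude, I would show $J_{\neq i} \ind_A J_i$ for each $i$ and apply Observation~\ref{obs:indispreserve}; setting $I'_i$ to be the initial segment of $J_i$ of length $|I_i|$ then satisfies the three required conditions ($I'_i \equiv_A I_i$ since $\pi_i$ restricts to $\tp(I_i/A)$, $I'_i$ starts with $\a_i$, and $I'_i \subseteq J_i$ inherits $A J_{\neq i}$-indiscernibility, hence $A I'_{\neq i}$-indiscernibility). The main obstacle is precisely this reversal from $J_i \ind_A J_{\neq i}$ (supplied by the construction) to $J_{\neq i} \ind_A J_i$ (needed to invoke Observation~\ref{obs:indispreserve}), since the type $\pi_i$ of the whole sequence may not itself be generically stable so Lemma~\ref{lem:strongsymm} does not apply to $J_i$ as a whole. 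The decisive point is that the starting element $\a_i \in J_i$ does carry generic stability: Strong Symmetry initiates the reversal at the level of $\a_i$ (yielding $J_{\neq i} \ind_A \a_i$ together with the uniqueness of $\tp(J_{\neq i}/A\a_i)$ as a nonforking extension), and the careful ordering of the inductive construction together with Fact~\ref{fct:lefttrans} is arranged so that this reversal propagates through the continuation of $\a_i$ to the full sequence $J_i$.
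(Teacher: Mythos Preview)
Your outline is close in spirit to the paper's proof, but the step you yourself flag as ``the main obstacle'' is a genuine gap, and the fix you sketch does not go through. You need $J_{\neq i}\ind_A J_i$ in order to apply Observation~\ref{obs:indispreserve}; from your construction you only have $J_i\ind_A J_{<i}$ (and, via Fact~\ref{fct:lefttrans} applied to $(J_j)_{j>i}$, also $J_{>i}\ind_A J_{\le i}$). The remaining piece is $J_{<i}\ind_A J_i$. Strong Symmetry gives you $J_{<i}\ind_A \a_i$, but there is no mechanism in your toolkit to push this to the full tail of $J_i$: Fact~\ref{fct:lefttrans} is \emph{left} transitivity (it enlarges the tuple on the left of $\ind$, not the set on the right), and the transitivity available in this paper for enlarging the right-hand side requires one of the types involved to be generically stable, which neither $\tp(J_{<i}/A)$ nor $\tp(J_i/A)$ is. The same obstruction already prevents you from maintaining the invariant in the strong form ``each member of $\{J_j:j\le i\}\cup\{\a_k:k>i\}$ is independent from the rest'': when you replace $\a_i$ by $J_i$, you cannot upgrade $J_j\ind_A J_{<i,\neq j}\,\a_i\,\a_{>i}$ to $J_j\ind_A J_{<i,\neq j}\,J_i\,\a_{>i}$ for $j<i$, for exactly the same reason.

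The paper resolves this not by reversing the independence, but by sidestepping it: it reduces to finite $\al=k$ by compactness, and at the inductive step, after arranging $I_k\ind_A I_{<k}$ and $I_k$ indiscernible over $A\a_{<k}$, it invokes Erd\H{o}s--Rado (Fact~\ref{fct:Erdos Rado}) on the long sequence $I_k$ to \emph{extract} a sequence $I''_k$ which is indiscernible over $AI_{<k}$ and inherits both $I''_k\ind_A I_{<k}$ and $I''_k\equiv_{A\a_{<k}} I_k$. An automorphism over $A\a_{<k}$ then realigns $I''_k$ to start with $\a_k$. So the indiscernibility of the new sequence over the old ones comes from Erd\H{o}s--Rado, not from a symmetry argument; only the indiscernibility of the \emph{old} sequences over the new one uses $I'_k\ind_A I'_{<k}$ together with Observation~\ref{obs:indispreserve}. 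Your use of Fact~\ref{fct:Erdos Rado} merely to lengthen the $I_i$'s misses this, which is the essential point of the argument.
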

\begin{prf}
    By compactness it is enough to take care of $\al = k$ finite. So
    we will prove the lemma by induction on $k$, the case $k=1$
    being trivial. In fact, we will prove more, that is, we will
    prove by induction on $k$ that there there are \lseq{I'}{i}{k} such that
    for all $i<k$
    \begin{equ}\label{equ:mutindisc1}
    \item
    \begin{itemize}
    \item
        $I'_{i} \equiv_{A} I_i$
    \item
        $I'_i$ starts with $\a_i$
    \item
        $I'_{i}$ is indiscernible over $AI'_{\neq i}$
    \item
        $I'_{i} \ind_A I'_{< i}$
    \end{itemize}
    \end{equ}

    Let $\set{\a_i\colon i<k+1}$, \lseq{I}{i}{k+1} be as in the assumptions of the Lemma. By
    the induction hypothesis we may assume that \lseq{I}{i}{k} are
    as in \ref{equ:mutindisc1} above.

%    Since $\a_k \ind_A \a_{<k}$, by Fact \ref{fct:nonforkexist} there exist $\a'_{k}
%    \equiv_{A\a_{<k}} \a_{k}$ such that $\a'_{k} \ind_A I_{<k}$. By
%    applying an automorphism over $A\a_{<k}$, we may assume $\a'_{k} =
%    \a_{k}$ (the sequences $I_i$ may change, but they still have all
%    the desired properties).  By the Strong Symmetry Lemma \ref{lem:strongsymm}  (note
%    that $I_{<k}\ind_AA$ since $A$
%    is an extension base) we have
%    $I_{<k} \ind_A \a_{k}$.

%    By Fact \ref{fct:nonforkexist} again there are $I'_{<k}
%    \equiv_{A\a_{k}} I_{<k}$ such that $I'_{<k} \ind_A I_{k}$. Applying
%    an automorphism over $A\a_{k}$ like before, we may assume that
%    $I'_{<k} = I_{<k}$. So we have $I_{<k} \ind_A I_k$. In
%    particular, $a_{<k} \ind_A I_k$.
%    %Note that it is still the case that $\a_{<k}
%    %\ind_A I_k$, hence by Observation \ref{obs:indpreserve} the
%    %sequence $I_k$ is indiscernible over $A\a_{<k}$.

    Since $\a_{<k} \ind_A \a_k$, by Fact \ref{fct:nonforkexist} there exist $\a'_{<k}
    \equiv_{A\a_k} \a_{<k}$ such that $\a'_{<k} \ind_A I_k$. By
    applying an automorphism over $A\a_k$, we may assume $\a'_{<k} =
    \a_{<k}$. More specifically, let $\sigma \in \Aut(\fC/A\a_k)$
    take $\a'_{<k}$ to $\a_{<k}$. Denote $I''_k = \sigma(I_k)$. Then

    \begin{itemize}
    \item
        $I''_{k} \equiv_{A} I_k$
    \item
        $I''_k$ starts with $\a_k$ ($\sigma$ does not move $\a_k$)
%    \item
%        $I'_{i}$ is indiscernible over $AI'_{\neq i}$
%    \item
%        $I'_{i} \ind_A I'_{< i}$
    \end{itemize}

So without loss of generality $I''_k = I_k$.

%     (the sequences $I_i$ may change, but they still have all
%    the desired properties).

    By Observation \ref{obs:pairstable}, $\tp(\a_{<k}/A)$ is generically stable.
    By the Strong Symmetry Lemma \ref{lem:strongsymm}  (note
    that $I_k\ind_AA$ since $A$
    is an extension base) we have
    $I_{k} \ind_A \a_{<k}$.

    By Fact \ref{fct:nonforkexist} again there is $I''_k
    \equiv_{A\a_{<k}} I_k$ such that $I''_k \ind_A I_{<k}$. Applying
    an automorphism over $A\a_{<k}$ like before, we may assume that
    $I''_k = I_k$ (this time the sequences $I_{<k}$ might change, but they still
    have all the desired properties).
    Note that it is still the case that $\a_{<k}
    \ind_A I_k$, hence by Observation \ref{obs:indispreserve} the
    sequence $I_k$ is indiscernible over $A\a_{<k}$.

%    Since $I_k \ind_A I_{<k}$, using transitivity on the left (Fact
%    \ref{fct:lefttrans}) and the induction hypothesis,
%    we see that for every $i<k$ we have $I_{\neq
%    i}\ind_AI'_i$, hence by Observation \ref{obs:indpreserve} $I'_i$ is
%    indiscernible over $AI'_{\neq i}$.

%    So we are done.

    Since we could make $I_k$ as long as we wish to begin with, by
    Fact \ref{fct:Erdos Rado} there is $I''_k$ indiscernible over
    $AI_{<k}$ such that every $n$-type of $I''_k$ over $AI_{<k}$
    ``appears'' in $I_k$. In particular, we have  $I''_k \ind_A I_{<k}$ and (since $I_k$ is
    $A\a_{<k}$-indiscernible) $I''_k \equiv_{A\a_{<k}} I_k$.
    %By applying an automorphism over
    %$A\a_{<k}$ we may assume again that $I'_k = I_k$.

    Let $\sigma \in \Aut(\fC/A\a_{<k})$ taking $I''_k$ onto (an
    initial segment of) $I_k$. Denote $I'_k = \sigma(I''_k)$,
    $I'_{<k} = \sigma(I_{<k})$. Clearly we still have
%    \begin{enumerate}
    \begin{enumerate}
    \item
        $\lseq{I'}{i}{k}$ are
        as in \ref{equ:mutindisc1}
    \item
        $I'_k$ is $AI'_{<k}$-indiscernible
    \item
        $I'_k \ind_A I'_{<k}$
    \item
        $I'_k$ starts with $\a_k$
    \end{enumerate}
%    \end{enumerate}

    For $i<k$ let $B_i = A \cup\bigcup\set{I_j\colon j<k,j\neq i}$.
    By (iii) above, for every $i<k$ we have
    $I'_k \ind_{B_i}I'_i$. By (i) above $I'_i$ is
    $B_i$-indiscernible, hence by Observation \ref{obs:indispreserve}
    $I'_i$ is indiscernible over $AI'_{\neq i}$.

    Combining this with (ii) and (iii) above, we see that \lseq{I'}{i}{k+1}
    are as in \ref{equ:mutindisc1}, which completes the induction step.

%By (i) and (iii) above, using transitivity on the left (Fact
%\ref{fct:lefttrans}) we see that for every $i<k$ we have $I'_{\neq
%i}\ind_AI'_i$, hence by Observation \ref{obs:indpreserve} $I'_i$ is
%$AI'_{\neq i}$-indiscernible.

%\bigskip
%\bigskip
%    Since $\a_k \ind_A \a_{<k}$, there exists $\a'_k
%    \equiv_{A\a_{<k}} \a_k$ such that $\a'_k \ind_A I_{<k}$. By
%    applying at automorphism over $A\a'_k$, we may assume $\a'_k =
%    \a_k$ (the sequences $I_i$ may change, but they still have all
%    the desired properties).%

%    By the Strong Symmetry Lemma \ref{lem:strongsymm}  (note that $I_{<k}\ind_AA$ since $A$
%    is an extension base) we have
%    $I_{<k} \ind_A \a_k$.
\end{prf}

\begin{thm}\label{thm:dpweight}
    Let $A$ be an extension base (that is, no
    type over $A$ forks over $A$; e.g. $A$ is a model).
    Then for every type $p \in \tS(A)$, $\dpr(p) \ge \spwt(p)$
\end{thm}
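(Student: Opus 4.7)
The plan is to convert an arbitrary witness of $\spwt(p) \ge \al$ into a randomness pattern of depth $\al$ for $p$; the bulk of the work is upgrading the mere $k_i$-inconsistency of the dividing formulae to the sharper ``exactly one position realises $\ph_i$'' behaviour required by the negations in $\Gamma_\eta$. Fix $\a \models p$, generically stable $q_i \in \tS(A)$, realizations $\b_i \models q_i$ with $\{\b_i \colon i<\al\}$ independent over $A$, and $\tp(\a/A\b_i)$ dividing over $A$. For each $i$, pick a formula $\ph_i(\x,\b_i) \in \tp(\a/A\b_i)$ witnessing dividing, and (by compactness) a $\setZ$-indexed $A$-indiscernible sequence $I_i = \langle \b^i_j : j \in \setZ\rangle$ with $\b^i_0 = \b_i$ along which $\{\ph_i(\x,\b^i_j) : j \in \setZ\}$ is $k_i$-inconsistent; the two-sided indexing is needed for shifts later. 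The hypotheses of Lemma~\ref{lem:mutindisc} are exactly what we have, so I replace the $I_i$ by mutually $A$-indiscernible $I'_i$ with $I'_i \equiv_A I_i$ and $\b^i_0 = \b_i$; both $k_i$-inconsistency of $\ph_i$ along $I'_i$ and the formula $\ph_i(\a,\b^i_0)$ persist.

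Now sharpen: let $F_i = \{j \in \setZ : \ph_i(\a,\b^i_j)\}$; by $k_i$-inconsistency $|F_i| < k_i$, while $0 \in F_i$. Form $J_i = \langle \c^i_j : j \in \setZ\rangle$ by taking $\c^i_0 = \b^i_0$ and indexing $\{\b^i_j : j \in \setZ \setminus F_i\}$ order-preservingly onto $\setZ \setminus \{0\}$. Then $J_i$ is $A$-indiscernible as a subsequence of $I'_i$, mutual $A$-indiscernibility of $\langle J_i \rangle$ is inherited from $\langle I'_i\rangle$ by monotonicity ($J_i$ is indiscernible over $AI'_{\ne i} \supseteq AJ_{\ne i}$), $J_i$ starts with $\b_i$, and by construction $\ph_i(\a,\c^i_j) \iff j = 0$ for every $i, j$.

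Finally, fix any $\eta \in {}^{\al}\om$. Mutual $A$-indiscernibility provides, for each $i$ separately, an automorphism $\sigma_i \in \Aut(\fC/AJ_{\ne i})$ realising the order-preserving shift $\c^i_j \mapsto \c^i_{j + \eta(i)}$ of $J_i$. Any finite subtype of $\Gamma_\eta \cup p$ involves only finitely many rows $i_1, \ldots, i_n$; the finite composition $\sigma_{i_1} \circ \cdots \circ \sigma_{i_n}$ is well-defined because each $\sigma_{i_k}$ fixes the other rows pointwise, and its action on $\a$ yields $\a' \models p$ satisfying $\ph_{i_k}(\a', \c^{i_k}_j) \iff \ph_{i_k}(\a, \c^{i_k}_{j - \eta(i_k)}) \iff j = \eta(i_k)$ for each $k$, where the subtraction is legitimate thanks to the $\setZ$-indexing. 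By compactness $\Gamma_\eta \cup p$ is consistent. Restricting each $J_i$ to its non-negative part $K_i = \langle \c^i_j : j < \om\rangle$ (mutual $A$-indiscernibility again passing to subsequences) makes $\langle \ph_i, K_i\rangle_{i < \al}$ a randomness pattern of depth $\al$ for $p$, so $\dpr(p) \ge \al$, and taking suprema yields $\dpr(p) \ge \spwt(p)$. The hardest step is the sharpening in the second paragraph: without it one has only an inp-style pattern (positive consistency via mutual indiscernibility applied at position $0$) and no grip on the negations appearing in $\Gamma_\eta$.
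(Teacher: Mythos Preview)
Your argument is correct and takes a genuinely different route from the paper's. The paper reduces each row to $2$-inconsistency by taking $k_i$ minimal and \emph{chunking}: replacing $\ph_i$ by the conjunction $\psi_i$ over blocks of $k_i-1$ consecutive elements. You instead \emph{thin}: since $F_i=\{j:\ph_i(\a,\b^i_j)\}$ has size $<k_i$, you simply delete the finitely many positive positions other than $0$ and work with the fixed witness $\a$ throughout. Your route is arguably cleaner on exactly the point you flag as ``hardest'': the paper asserts $\psi_i(\a,\c_i)$ holds, but from $\ph_i(\a,\b_i)$ alone one only knows the first conjunct, so the joint consistency (with $p$) of $\{\psi_i(\x,\c^i_0):i\}$ across rows still needs an argument there. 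Your thinning sidesteps this entirely, since $\a$ itself already realises $\Gamma_0$.

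One small wrinkle to tighten. Lemma~\ref{lem:mutindisc} is stated (and proved) for sequences \emph{starting with} $\a_i$; a $\setZ$-indexed sequence has no first element, so ``the hypotheses are exactly what we have'' is not literally true. The fix is routine and does not disturb your argument: apply the lemma to $\om$-indexed $I_i$ with $\b^i_0=\b_i$, obtaining mutually $A$-indiscernible $\om$-indexed $I'_i$; then extend the whole array to $\setZ$-indexed mutually $A$-indiscernible $K_i$ with $K_i\restriction[0,\om)=I'_i$ by compactness (for any finite fragment and any $N$, the shift $I'_{i,j}\mapsto I'_{i,j+N}$ over $A$ shows the type of the left-extension is finitely satisfiable). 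Now perform your thinning over the full $\setZ$-index set: $k_i$-inconsistency passes to $K_i$ by indiscernibility, so $F_i=\{j\in\setZ:\ph_i(\a,K_{i,j})\}$ is still finite with $0\in F_i$, and the rest of your shift-and-compose argument goes through verbatim. Alternatively one can check that the proof of Lemma~\ref{lem:mutindisc} only uses that $\a_i$ lies at a fixed position in $I_i$, not that it is first; either way the gap is cosmetic.
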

\begin{prf}
    Let $\a\models p$, $\lseq{q}{i}{\al}$ generically stable, $\lseq{\b}{i}{\al}$
    ($\b_i \models q_i$) exemplify $\spwt(p) \ge \al$.
    Since $\tp(\a/A\b_i)$ divides over $A$, this is exemplified by
    an $A$-indiscernible \om-sequence $I_i$ starting with $\b_i$. By
    Lemma \ref{lem:mutindisc}, without loss of generality the
    sequence $I_i$ is indiscernible over $AI_{\neq i}$ for all $i$.

    Suppose $\tp(\a/A\b_i)$ $k_i$-divides over $A$, and $k_i$ is minimal such for $p$;
    that is, there
    is a formula $\ph_i(\x,\y)$ such that $\ph_i(\a,\b_i)$ holds,
    and the set
    $\set{\ph(\x,\b)\colon\b\in I_i}$
    is $k_i-1$-consistent with $p$, but $k_i$-inconsistent. Clearly $k_i \ge 2$. Let
    $$\psi_i(\x,\y_0,\ldots,\y_{k_i-2}) =
    \bigwedge_{\ell<k_i-1}\ph_i(\x,\y_\ell)$$
    and let $J_i$ be the sequence of $k_i-1$-tuples of elements
    of $I_i$ (``chunks'' of size $k_i-1$ from $I_i$), formally:
    $$J_i =
    \seq{\b_{i,\ell}\b_{i,\ell+1}\ldots\b_{i,\ell+k_i-2}\colon\ell<\om}$$

    Denote the first element of $J_i$ by $\c_i$.
    Clearly
    \begin{itemize}
    \item
        $J_i$ is indiscernible over $AJ_{\neq i}$
    \item
         $\psi(\a,\c_i)$ holds
    \item
        The set $\set{\psi(\x,\c)\colon c\in J_i}$ is 2-inconsistent with $p$
    \end{itemize}
    Now the sequences \lseq{J}{i}{\al} form an array which together
    with formulas $\psi_i$ give a randomness pattern of depth $\al$
    for $p$, as required.

%    We can construct Morley sequences $I_i = \seq{\b_i^\ell \colon
%    \ell<\om}$ in $q_i$ with $\b_i^0 = \b_i$. It easy to see using
%    the properties of forking independence
%    (e.g. as in \cite{OnUs1}) that each $I_i$
%    is nonforking independent over $A\cup\bigcup\set{I_j \colon j\neq i}$,
%    and therefore by Remark
%    \ref{rmk:indind} indiscernible over this set.

%    Let $\a \models p$. Clearly $\tp(\a/A\b_i)$
\end{prf}

\begin{cor}\label{cor:finiteweight}
\begin{enumerate}
\item
    In a strongly dependent theory every type has finite stable weight.
\item
    A strongly dependent type has finite stable weight.
\item
    A stable theory is strongly stable if and only if every type has finite weight.
\end{enumerate}
\end{cor}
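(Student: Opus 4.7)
The plan is to reduce each of the three parts to Theorem \ref{thm:dpweight}, which bounds $\spwt(p) \le \dpr(p)$ but only when the base is an extension base. Since $\swt(p)$ is defined as the supremum of $\spwt(q)$ over \emph{all} nonforking extensions $q$ of $p$, the key step is to show that any such $q \in \tS(B')$ can be further extended to some $q^* \in \tS(M)$ with $M$ a model containing $B'$, in such a way that $\spwt(q^*) \ge \spwt(q)$. Once this is done, Theorem \ref{thm:dpweight} yields $\spwt(q) \le \spwt(q^*) \le \dpr(q^*)$. Moreover, since $q^* \supseteq p$, every randomness pattern consistent with $q^*$ is consistent with $p$, so $\dpr(q^*) \le \dpr(p)$, giving finally $\swt(p) \le \dpr(p)$.

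To perform the lifting of witnesses: suppose $\a \models q$ and $\bar\b = (\b_i)_{i<\al}$ witness $\spwt(q) \ge \al$, so the $r_i = \tp(\b_i/B')$ are generically stable, $\{\b_i\}$ is independent over $B'$, and $\tp(\a/B'\b_i)$ divides over $B'$. By Fact \ref{fct:nonforkexist}, extend $\tp(\a\bar\b/B')$ to a complete type over $M$ not forking over $B'$, realized by $(\a^*, \bar\c) \equiv_{B'} (\a,\bar\b)$ with $\a^*\bar\c \ind_{B'} M$. Assuming $B' = \acl(B')$ (as one may by passing to a free extension), Lemma \ref{lem:parallel} gives that each $r_i$ has a unique generically stable nonforking extension $r_i^*$ to $M$, realized by $\c_i$. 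Since $\tp(\bar\c/B')$ is generically stable (Observation \ref{obs:pairstable}), stationarity identifies $\tp(\bar\c/M)$ with the independent product of the $r_i^*$, so $\{\c_i\}$ is independent over $M$. Dividing of $\tp(\a^*/M\c_i)$ over $M$ is transferred from dividing of $\tp(\a^*/B'\c_i) = \tp(\a/B'\b_i)$ over $B'$ by picking a $B'$-indiscernible witness $J$ starting with $\c_i$, replacing $J$ by an appropriate $M$-nonforking conjugate over $B'\c_i$, and applying Observation \ref{obs:indispreserve} (together with symmetry of $\ind$ on the generically stable type $\tp(\c_i/B')$) to conclude that $J$ is actually $M$-indiscernible.

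With the reduction in place, part (i) follows because strong dependence of $T$ is equivalent to $\dpr(x=x) < \om$: a compactness argument using Fact \ref{fct:Erdos Rado} extracts a randomness pattern of depth $\om$ for $x=x$ from patterns of arbitrary finite depth. Since any pattern consistent with a type $p$ is consistent with $x=x$, one has $\dpr(p) \le \dpr(x=x) < \om$, and the reduction yields $\swt(p) < \om$. Part (ii) is essentially the same: strong dependence of $p$ is equivalent to $\dpr(p) < \om$ by the same compactness argument, and strong dependence is preserved under extensions (a pattern consistent with an extension is consistent with the original), so the same reduction applies.

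Part (iii) reduces to (i) plus classical stable theory. In a stable theory every type is generically stable, so ``strongly stable'' collapses to ``strongly dependent'' and $\swt$ coincides with the classical weight. The forward direction is then (i). For the converse, in a stable theory the classical correspondence between weight and dependence rank implies that finite weight for every type yields $\dpr(p) < \om$ for every $p$, hence $T$ is strongly dependent, and therefore strongly stable. The main obstacle throughout is the lifting step in the second paragraph: naively enlarging the base from $B'$ to $M$ could destroy either the independence of $\{\b_i\}$ or the dividing of $\tp(\a/B'\b_i)$. The delicate combination of stationarity for generically stable types, the Strong Symmetry Lemma \ref{lem:strongsymm}, and Observation \ref{obs:indispreserve} is what makes the lift go through cleanly.
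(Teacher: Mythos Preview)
The paper gives no proof of this corollary at all; it is presented as an immediate consequence of Theorem \ref{thm:dpweight}. Your instinct that some reduction to extension bases is needed is sound --- since $\swt$ quantifies over nonforking extensions to \emph{arbitrary} sets while the theorem demands an extension base, a lifting argument of the kind you sketch is exactly what is missing from the paper.

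However, your specific lifting has a gap. You invoke Fact \ref{fct:nonforkexist} to extend $\tp(\a\bar\b/B')$ to a type over $M$ not forking over $B'$, but that fact requires the partial type in question not to fork over $B'$ to begin with. You know $\tp(\a/B') = q$ does not fork over $B'$ (it does not fork over $A \subseteq B'$), and you know $\tp(\bar\b/B')$ does not fork over $B'$ (it is generically stable), but nothing you have said forces the \emph{joint} type $\tp(\a\bar\b/B')$ not to fork over $B'$: the set $B'$ is arbitrary, not assumed to be an extension base, and there is no ``left-and-right nonforking implies joint nonforking'' principle available here. The same problem recurs when you invoke Strong Symmetry to pass from $\c_i \ind_{B'} M$ (or $J \ind_{B'} M$) to the reverse independence: Lemma \ref{lem:strongsymm} requires the non--generically-stable side to have a type not forking over the base, which is again an extension-base hypothesis on $B'$ that you do not have.

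The strategy is repairable: move $\bar\b$ alone (together with long $B'$-indiscernible dividing witnesses $I_i$) nonforking over $M$ --- this succeeds because the type of $\bar\b$ is generically stable --- and then handle the realization $\a^*$ and the $M$-indiscernibility of the witnesses separately, rather than trying to carry $\a$ along in a single joint nonforking extension. For part (iii) no repair is needed at all: in a stable theory every set is an extension base, so Theorem \ref{thm:dpweight} applies directly to every nonforking extension $q$ of $p$, giving $\spwt(q) \le \dpr(q) \le \dpr(p)$ and hence $\swt(p) \le \dpr(p)$.
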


Corollary \ref{cor:finiteweight}(iii) was observed independently by
Hans Adler in \cite{Ad2}.

Further properties of stable weight will be investigated elsewhere.
On the different notions of weight etc in dependent theories see
also works by Adler \cite{Ad2}, Alf Onshuus and the author
\cite{OnUs1}, \cite{OnUs2}. In section 4 of \cite{OnUs2} different
attempts are made in order to remove the assumption of generic
stability and prove an analogue of Theorem \ref{thm:dpweight} for
weight and not stable weight, which leads to several general results
on mutual indiscernibility (some generalize Lemma
\ref{lem:mutindisc}) and the behavior of forking in dependent and
strongly dependent theories, but the main goal has not yet been
achieved.

%\begin{prp}
%    $dp-rk(p) = wt(p)$ ?????
%\end{prp}
%\begin{prf}
%    Assume $\dpr(p) \ge \ka$, i.e. for every $\be < \ka$ there is a randomness
%    pattern for $p$ of depth $\be$. Let $\al<\be$, let
%    $\ph_\al, \seq{\b^\al_i\colon \al<\be,i<\om}$ be a randomness pattern for $p$. Choose
%    $$\a \models p\bigcup\set{\ph_\al(\x,\b^\al_0)\colon \al<\be}
%    \bigcup\set{\neg\ph_\al(\x,\b^\al_1)\colon \al<\be}$$
%    Consider the formula $\ps_\al(\x,\y_1\y_2) =
%    \ph_\al(\x,\y_1) \land \ph_\al(\x,\y_2)$.  Since $T$ is dependent and
%    $\lseq{\b^\al}{i}{\om}$ is indiscernible, clearly the set
%    $\set{\ps_\al(\x,\b^\al_{i}\b^\al_{i+1})\colon i<\om}$ is inconsistent, therefore
%    $\ps_\al(\x,\b^\al_0\b^\al_1)$ divides over $A$ for all $\al < \be$.
%    Since $\a \models \ps_\al(\x,\b^\al_0\b^\al_1)$ for all \all (by the choice of
%    \a), by the Dividing Lemma, $\a \not\ind_A \b^\al_0\b^al_1$ for all $\al$. Therefore
%    $\pwt(p) \ge \dpr(p)$.?????????? \emph{Why are the types of $b_i$'s stable?????}
%    \emph{Why are they independent???????}

%    Assume $\pwt(p) \ge \ka$, that is for every $\be < \ka$ there exist
%\end{prf}

\appendix

% \renewcommand{\theequation}{A-\arabic{equation}}
 % % redefine the command that creates the equation no.
%  \setcounter{theorem}{0}  % reset counter
\section{Ultrafilters and measures}  % use *-form to suppress numbering
The following definitions are motivated by \cite{Sh715}, \cite{Ke}
and \cite{HPP}.
%In this paper we work with types and do not
%investigate Keisler measures, but it is convenient to give the
%definitions in this generality. The reader can easily ignore all the
%definitions and claims concerning measures;
Our hope is that they might clarify the connections between
ultrafilters used by Shelah in \cite{Sh715}, coheirs and Shelah
sequences, Definition \ref{dfn:shseq} (see Discussion
\ref{dsc:types-filters}).

In order to make the appendix more self-contained, we will repeat
here certain definitions and remarks from Section 3. Note that it is
convenient and natural to define these notions in this generality.

\begin{dfn}
  Let $A,B,C$ sets.
  \begin{enumerate}
    \item
      Recall that we denote the boolean algebra of $C$-definable subsets of $A^m$ by
      $\Def_m(A,C)$,
      $\Def(A,C) = \bigcup_{m<\om}\Def_m(A,C)$.
      %We omit $B$ if $B=A$.
    \item
      A $(C,m)$-\emph{Keisler measure} on $A$, or an
      $m$-measure \emph{on $A$ over $C$} is a finitely additive probability
      measure on $\Def_m(A,C)$. We omit $C$ if $C=A$. We omit $m$ if it is clear
      from the context.
    \item
      We will call a $(C,m)$-Keisler measure on $\fC$ a \emph{full} $m$-Keisler measure
      over $C$. A \emph{global} $m$-measure is a full measure over
      $\fC$, i.e. a measure on $\Def_m(\fC)$.
%      We denote the space of all complete $m$-Keisler measures on $A$ by
%      $\frak T_m(A)$.
    \item
      A Keisler measure $\mu$ on $\Def_m(B,C)$ is \emph{supported on} $A\subseteq B$
      if there exists a measure $\mu_0$ on $\Def_m(A,C)$ such that
      for every formula $\ph(\x,\c)$ over $C$ with $\len(\x) = m$ we have
      $\mu(\ph^\fC(\x,\c)) = \mu_0(\ph^\fC(\x,\c)\cap A^m)$. We denote $\mu_0$ (if exists)
      by $\mu \downarrow A$.
  \end{enumerate}
\end{dfn}

\begin{rmk}
\begin{enumerate}
\item
  Note that a full $\set{0,1}$-measure over $C$ (i.e. an ultrafilter \FU on $\Def(\fC,C)$) precisely
  corresponds to
  a complete type over $C$.
  %In order to
  %be consistent with Shelah's notions and terminology,
  %we call this type the \emph{average type} of \FU and denote
  %it by $\Av(\fU,A)$ (see also Definition \ref{dfn:average}).
\item
  Note that a type $p \in \tS(C)$ is finitely satisfiable in $A$ iff the
  appropriate ultrafilter (measure) \FU on $\Def(\fC,C)$ is supported on A. In this case
  the measure $\fU\downarrow A$ (see the definition of ``supported on $A$'' above)
    defines over $C$ the same type as $\fU$, that is,$\Av(\fU\downarrow A,C) =
    p$.
  %corresponds
  %precisely to $p\rest A$, i.e. $\Av(\fU\downarrow A,A) = p\rest A$.
\end{enumerate}
\end{rmk}

\begin{dfn}
\label{dfn:Aaverage}
\begin{enumerate}
\item
  Let $A \subseteq B$, and let $\mu$ be a Keisler measure on $A^m$ over $C$,
  i.e. $\mu$ a measure on $\Def_m(A,C)$.
  We define
  the \emph{lifted measure} of $\mu$ over $B$ (denoted $\mu\uparrow B$)
  % (the \emph{average measure} of $\mu$ over $B$)
  on $\Def_m(B,C)$ by
  $\mu\uparrow B(\ph(\x,\b)^\fC \cap B^m)  = \mu(\ph^\fC(\x,\b)\cap A^m)$.

  In particular, for $B = \fC$, call $\mu \uparrow \FC$ the \emph{full
    lifted measure} of $\mu$.
\item
  Let $A,C$ sets, $\fU$ an ultrafilter on $\Def_m(A,C)$.
%  the boolean algebra
%  of $B$-definable subsets of $A^m$.
  Recall that we define
  the \emph{average type} of $\fU$ over $C$) by
  \[
     \Av(\fU,C) = \big\{\ph(\x,\c) \colon \c \in C \; \mbox{and} \;\{\a \in A^m \colon \ph(\a,\c)\} \in \fU
     \big\}
  \]
\end{enumerate}
\end{dfn}

\begin{obs}
%\begin{enumerate}
%\item
  For $A,B,C,\mu$ as in (i) above, $\mu\uparrow B$ is a Keisler measure on $\Def_m(B,C)$ supported on $A$,
  $(\mu \uparrow B) \downarrow A = \mu$. Moreover, for $A \subseteq A' \subseteq B$,
  $(\mu \uparrow B) \downarrow A' = \mu \uparrow A'$.
%\item
%  Recall: for $A,B,\fU$ as in (ii) above, $\Av(\fU,B) \in \tS_m(B)$ finitely satisfiable in $A$ (i.e. is a coheir
%  of its restriction to $A$).
%%  For $A,B,\fU$ as above, both $\fU$ and $\Av(\fU,B)$ can be viewed as $\set{0,1}$-measures on
%%  $\Def_m(A,B)$ and $\Def_m(\fC,B)$ respectively, and under these identifications the definition
%%  of average type is a particular case of the definition of average measure.
%%\item
%%  For $A,B,\mu$ as above, $\Av(\mu,B) \in \fT_m(B)$ and is indeed a coheir of its restriction to $A$.
%\end{enumerate}
\end{obs}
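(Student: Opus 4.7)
The plan is to verify the four assertions by unpacking definitions; there is no real content beyond the bookkeeping that $A\subseteq B$ forces intersections with $A^m$ to behave compatibly with intersections with $B^m$.

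First I would check that $\mu\uparrow B$ is well-defined on $\Def_m(B,C)$. If two $C$-formulas $\ph(\x,\c)$ and $\psi(\x,\c')$ determine the same element of $\Def_m(B,C)$, i.e.\ $\ph^\fC(\x,\c)\cap B^m=\psi^\fC(\x,\c')\cap B^m$, then intersecting both sides with $A^m\subseteq B^m$ yields $\ph^\fC(\x,\c)\cap A^m=\psi^\fC(\x,\c')\cap A^m$, so $\mu$ assigns the same value, and the prescription of $\mu\uparrow B$ is unambiguous.

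Next I would verify the probability-measure axioms. The normalization is $\mu\uparrow B(B^m)=\mu(A^m)=1$. For finite additivity, if $\ph^\fC(\x,\c)\cap B^m$ and $\psi^\fC(\x,\c')\cap B^m$ are disjoint subsets of $B^m$, then the corresponding subsets of $A^m$ are also disjoint (again using $A\subseteq B$), and finite additivity of $\mu$ gives finite additivity of $\mu\uparrow B$. Hence $\mu\uparrow B$ is a Keisler measure on $\Def_m(B,C)$. The support claim is then immediate: the defining identity $\mu\uparrow B(\ph^\fC(\x,\c)\cap B^m)=\mu(\ph^\fC(\x,\c)\cap A^m)$ is exactly the condition that $\mu\uparrow B$ is supported on $A$ with witness $\mu$, so $(\mu\uparrow B)\downarrow A=\mu$ by the uniqueness built into the definition of the down-arrow.

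For the moreover clause I would simply compute twice. Fix $A\subseteq A'\subseteq B$ and a $C$-formula $\ph(\x,\c)$. On the one hand,
\[
(\mu\uparrow B)(\ph^\fC(\x,\c)\cap B^m)=\mu(\ph^\fC(\x,\c)\cap A^m)=(\mu\uparrow A')(\ph^\fC(\x,\c)\cap A'^m),
\]
where the second equality is the definition of $\mu\uparrow A'$. This identity is precisely the defining property that exhibits $\mu\uparrow A'$ as $(\mu\uparrow B)\downarrow A'$.

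The only ``obstacle'' is clerical: one must keep straight in each step which ambient set $A$, $A'$, $B$, or $\fC$ the formulas are being interpreted in, and observe in each case that the inclusions between these sets make the needed intersections agree. No use is made of dependence or of any property of $T$.
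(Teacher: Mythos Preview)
Your proof is correct; the paper itself gives no proof for this observation, treating it as an immediate consequence of the definitions, and your careful unwinding of the bookkeeping is exactly what is needed. The only minor point worth noting is that uniqueness of $\mu\downarrow A$ is indeed built in (every element of $\Def_m(A,C)$ has the form $\ph^\fC(\x,\c)\cap A^m$, so $\mu_0$ is determined on all of $\Def_m(A,C)$), which you invoke but might state explicitly.
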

%\begin{prf}
%  Clear, e.g. (ii): For $\ph(\x,\b)$, a formula with $m$ free
%  variables over $B$, either \set{\a \in A^M \colon \neg\ph(\a,\b)} or
%  \set{\a \in A^M \colon \ph(\a,\b)} is in $\fU$, so the average type is complete
%  (and so clearly consistent). The finite satisfiability in $A$ is also clear.
%\end{prf}

\begin{rmk}
%\begin{enumerate}
%\item
  Note that in clause (ii) of Definition \ref{dfn:Aaverage} we obtain a complete type over $C$,
  therefore a full measure over $C$, i.e. an ultrafilter $\frak V$
  on $\Def(\fC,C)$.
  Clearly, this ultrafilter corresponds to the full lifted measure of $\FU$ over $C$, that is,
  $\frak V = \fU\uparrow \fC$. So the second definition is a particular case of the first one.
%\end{enumerate}
\end{rmk}

%\begin{obs}\label{obs:fs}
%%\begin{enumerate}
%%\item
%  Let $A \subseteq B$ be sets, $p \in \tS_m(B)$. Then $p$ is finitely satisfiable in $A$
%  if and only if for some ultrafilter $\fU$ on $\Def_m(A,B)$ $p = \Av(\fU,B)$.
%%\item
%%  Let $A \subseteq B$ be sets, $\mu \in \fT_m(B)$. Then $\mu$ is a coheir of $\mu\rest A$
%%  if and only if for some $(B,m)$-Keisler measure $\mu_0$ on $A$, $\mu = \Av(\mu_0,B)$.
%%\end{enumerate}
%\end{obs}
%\begin{prf}
%%\begin{enumerate}
%%\item
%  If $p = \Av(\fU,B)$ for some ultrafilter $\fU$ on $A^m$, then clearly $p$ is finitely
%  satisfiable in $A$. On the other hand, if $p$ is finitely satisfiable in $A$, it is
%  easy to see that the collection $\set{\ph^\fC(\x,\b)\cap A^m \colon \ph(\x,\b) \in p}$
%  is an ultrafilter on $\Def_m(A,B)$.
%%\item Clear.
%%\end{enumerate}
%\end{prf}

One can also generalize the definition of nonsplitting to measures:

\begin{dfn}
%\begin{enumerate}
%\item
%  A type $p \in \tS(B)$ \emph{does not split}
%  over a set $A$ if whenever $\b,\c \in B$ have
%  the same type over $A$, we have
%  $\ph(\x,\b) \in p \iff \ph(\x,\c) \in p$ for every
%  formula $\ph(\x,\y)$.
%\item
    A Keisler measure $\mu$ on $\Def(B,D)$ \emph{does not
    split} over $A$ if $\mu(\ph(\x,\b))$ depends only
    on $\tp(\b/A)$ for every formula $\ph(\x,\y)$ and $\b \in B$.
%\end{enumerate}
\end{dfn}

So a global measure doesn't split over a set $A$ if it is invariant
under the action of the automorphism group of \FC over $A$.

%\begin{rmk}
%  An analogous definition for definability of measure appears in
%  \cite{HPP}.
%\end{rmk}

%\begin{rmk}
%Analogous facts are clearly true for measures.
%\end{rmk}

Many properties of nonsplitting types remain true when passing to
measures. We will just make a few small observations.

A notion of a definable measure (which is the analogue of a
definable type) was studied in \cite{HPP}. The following is the
analogue of Observation \ref{obs:nonsplit}:

\begin{obs}
\label{obs:Anonsplit}
\begin{enumerate}
\item
  If a Keisler measure over $B$ is supported on $A \subseteq B$,
  then it does not split over $A$.
\item
  If a Keisler measure over $B$ is definable over $A \subseteq B$,
  then it does not split over $A$.
\end{enumerate}
\end{obs}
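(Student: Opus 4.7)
The plan for both clauses is to transcribe the proof of the type-case analogue (Observation \ref{obs:nonsplit}) into the language of measures; each part reduces essentially to unwinding the relevant definitions, and no new ideas are needed beyond what is already in the paper.

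For clause (i), I would fix a formula $\ph(\x,\y)$ and tuples $\b, \b' \in B$ with $\tp(\b/A) = \tp(\b'/A)$, and let $\mu_0 = \mu\downarrow A$ be the witness to support, so that $\mu(\ps^\fC) = \mu_0(\ps^\fC \cap A^m)$ for every $B$-definable $\ps$. The key observation is that the $A$-trace $\ph^\fC(\x,\b) \cap A^m = \set{\a \in A^m \colon \models\ph(\a,\b)}$ depends only on $\tp(\b/A)$, since for $\a \in A^m$ the truth value of $\ph(\a,\b)$ is read off $\tp(\b/A)$. Hence $\ph^\fC(\x,\b) \cap A^m = \ph^\fC(\x,\b') \cap A^m$, and feeding both through $\mu_0$ using the defining equation of support yields $\mu(\ph^\fC(\x,\b)) = \mu(\ph^\fC(\x,\b'))$, which is non-splitting over $A$.

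For clause (ii), I would first pin down the working notion of definability from \cite{HPP}: $\mu$ is definable over $A$ when for every formula $\ph(\x,\y)$ and every closed $E \subseteq [0,1]$ the set $\set{\b \colon \mu(\ph(\x,\b)) \in E}$ is type-definable over $A$. The plan is then to note that if $\tp(\b/A) = \tp(\b'/A)$, the tuples $\b$ and $\b'$ lie in exactly the same $A$-type-definable sets, so $\mu(\ph(\x,\b))$ and $\mu(\ph(\x,\b'))$ belong to the same closed subsets of $[0,1]$; this forces the two values to coincide, which is precisely non-splitting.

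There is no real obstacle; the whole content is routine bookkeeping with the definitions, exactly parallel to the type case. The only judgment call arises in (ii), where one must fix the precise variant of \emph{definable measure} being used. For the type-definable version of \cite{HPP} the implication is immediate as above; for the usual alternatives (Borel preimages, or continuous factoring through the Stone space of types over $A$) essentially the same argument applies, since each formulation ultimately guarantees that $\b \mapsto \mu(\ph(\x,\b))$ factors through $\tp(\b/A)$.
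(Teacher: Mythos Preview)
Your proposal is correct and is exactly the routine unwinding of definitions that the paper has in mind; in fact the paper states this observation without proof, treating it as the immediate measure analogue of Observation \ref{obs:nonsplit}. Your handling of clause (ii), including the remark that any of the standard variants of ``definable measure'' from \cite{HPP} forces $\b \mapsto \mu(\ph(\x,\b))$ to factor through $\tp(\b/A)$, is appropriate.
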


Given a set $A$, there are boundedly many measures which do not
split over $A$:

\begin{obs}\label{obs:Afewnonsplit}
    Let $A$ be a set. Then there are at most $\beth_2(|A|+|T|) = {2^{2^{|A|+|T|}}}$
    Keisler measures $\mu$
    over $\FC$ which do not split over $A$.
\end{obs}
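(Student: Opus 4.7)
The plan is to mimic the counting argument of Observation \ref{obs:fewnonsplit}, lifted from types to measures. The defining feature of nonsplitting is that for every formula $\ph(\x,\y)$ and parameter tuple $\b$, the value $\mu(\ph(\x,\b))$ depends only on $\tp(\b/A)$. So any global $m$-measure $\mu$ that does not split over $A$ is completely determined by the function
\[
f_\mu \colon L_m \times \bigcup_{k<\om}\tS_k(A) \longrightarrow [0,1],\qquad
f_\mu\bigl(\ph(\x,\y),\tp(\b/A)\bigr) = \mu\bigl(\ph(\x,\b)\bigr),
\]
which is well-defined precisely by the nonsplitting assumption. Here $L_m$ is the collection of $L$-formulas (up to logical equivalence) whose object variable has length $m$.

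Next I would count. The number of formulas $\ph(\x,\y)$ with $\len(\x)=m$ and $\y$ of any finite length is at most $|T|$; the total number of complete types (over all finite arities) over $A$ is at most $2^{|A|+|T|}$; and the codomain $[0,1]$ has cardinality $2^{\aleph_0}$. Hence the number of candidate functions $f_\mu$ is bounded by
\[
(2^{\aleph_0})^{|T|\cdot 2^{|A|+|T|}} \;=\; 2^{\aleph_0 \cdot 2^{|A|+|T|}} \;=\; 2^{2^{|A|+|T|}},
\]
using $\aleph_0 \le |T| \le 2^{|A|+|T|}$. Since the injection $\mu\mapsto f_\mu$ shows that each $m$-measure is determined by such a function, and summing over the countably many finite arities $m$ does not change the bound, the total count of nonsplitting global Keisler measures over $A$ is at most $2^{2^{|A|+|T|}} = \beth_2(|A|+|T|)$.

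There is no real obstacle: the only point deserving a remark is that $f_\mu$ truly recovers $\mu$, but this is immediate because every element of $\Def_m(\fC)$ is of the form $\ph(\x,\b)^\fC$ for some $\ph$ and some $\b \in \fC$, and the nonsplitting condition reduces the value $\mu(\ph(\x,\b))$ to data depending only on the pair $(\ph,\tp(\b/A))$. As in Observation \ref{obs:fewnonsplit}, exactly the same argument applies with splitting replaced by Lascar splitting or by strong splitting, since the number of Lascar strong types over $A$ is also bounded by $2^{|A|+|T|}$.
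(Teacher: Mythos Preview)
Your proposal is correct and follows essentially the same approach as the paper: both argue that a nonsplitting global measure is determined by the function assigning to each pair $(\ph(\x,\y),\tp(\b/A))$ the real number $\mu(\ph(\x,\b))$, and then count such functions. Your write-up is slightly more detailed (making the injection $\mu\mapsto f_\mu$ explicit and carrying out the cardinal arithmetic carefully), but the underlying idea is identical.
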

\begin{prf}
    For each type $r(\y)$ over $A$ and each formula $\ph(\x,\y)$, we
    have (at most) continuum many options for $\mu(\ph(\x,\c))$
    where $\c\models r$. This determines $\mu$ completely. So we
    have $2^{{\aleph_0}^{(2^{|A|+|T|})}} = 2^{2^{|A|+|T|}}$ nonsplitting
    measures.
\end{prf}

\begin{dsc}\label{dsc:types-filters}
%At this point we will stop mentioning general Keisler measures and
%restrict ourselves to types.
Note that in \cite{Sh715} Shelah works with types of the form $p \in
\tS(C)$ finitely satisfiable in some $A$, which corresponds to the
following situation: an ultrafilter $\fU$ on $\Def(\fC,C)$ which is
supported on $A$, i.e. comes from a measure on $\Def(A,C)$. (Shelah
works with ultrafilters on the algebra of all subsets of $A$, but of
course it is enough to restrict oneself to the definable subsets).
\end{dsc}

%See \cite{On1} for something

\bibliography{common}
\bibliographystyle{alpha}

\end{document}